\newtheorem{theorem}{Theorem}[section]
\newtheorem{claim}{Claim}[theorem]
\newtheorem{lemma}[theorem]{Lemma}
\newtheorem{proposition}[theorem]{Proposition}
\newtheorem{observation}[theorem]{Observation}
\newtheorem{corollary}[theorem]{Corollary}
\theoremstyle{definition}
\newtheorem{definition}[theorem]{Definition}
\newtheorem{example}[theorem]{Example}
\theoremstyle{remark}
\newtheorem{remark}[theorem]{Remark}
\newtheorem{conclusion}[theorem]{Conclusion}
\newtheorem{context}[theorem]{Context}
\newcommand{\rest}{{\restriction}}
\newcommand{\dom}{{\rm dom}} 
\newcommand{\set}{{\rm set}}
\newcommand{\conc}{{}^\frown\!}
\newcommand{\val}{{\mathbf{val}}} 
\newcommand{\nor}{{\mathbf{nor}}} 
\newcommand{\dis}{{\mathbf{dis}}} 
\newcommand{\up}{{\rm up}} 
\newcommand{\dn}{{\rm dn}} 
\newcommand{\PC}{{{\rm PC}_\infty}}
\newcommand{\PCt}{{{\rm PC}_\infty^{\tt tt}}}
\newcommand{\PCJ}{{\rm PC}^J_{{\rm w}\infty}(K,\Sigma)}
\newcommand{\FC}{{{\rm FC}(K,\Sigma)}}
\newcommand{\Sigt}{{\Sigma^{\tt tt}}}
\newcommand{\Sigseq}{{\Sigma^{\rm seq}}}
\newcommand{\post}{{{\rm pos}^{\tt tt}}}
\newcommand{\upl}{\upharpoonleft}
\newcommand{\base}{{\rm base}}
\newcommand{\pos}{{\rm pos}} 
\newcommand{\uf}{{\rm uf}} 
\newcommand{\suft}{{\rm suf}_{\bar{t}}(K,\Sigma)} 
\newcommand{\sufs}{{\rm suf}_{\bar{t}}^*(K,\Sigma)} 
\newcommand{\sufp}{{\rm suf}_{\bar{t}}^\diamond(K,\Sigma)} 
\newcommand{\sufJ}{{\rm suf}_{\bar{t}}^J(K,\Sigma)} 
\newcommand{\cA}{{\mathcal A}}
\newcommand{\cB}{{\mathcal B}}
\newcommand{\cC}{{\mathcal C}}
\newcommand{\cD}{{\mathcal D}}
\newcommand{\bH}{{\bf H}}
\newcommand{\bfun}{{\mathbf{f}}}
\newcommand{\cF}{{\mathcal F}}
\newcommand{\ttl}{{\tt l}}
\newcommand{\cS}{{\mathcal S}}
\newcommand{\ttt}{{\tt t}}
\newcommand{\bbX}{{\mathbb X}}
\newcommand{\bbY}{{\mathbb Y}}
\newcommand{\bbZ}{{\mathbb Z}}
\def\mathunderaccent#1#2 {\let\theaccent#1\skewfactor#2
\mathpalette\putaccentunder}
\def\putaccentunder#1#2{\oalign{$#1#2$\crcr\hidewidth
\vbox to.2ex{\hbox{$#1\skew\skewfactor\theaccent{}$}\vss}\hidewidth}}
\begin{document}

\title{Partition theorems from creatures and idempotent ultrafilters} 

\author{Andrzej Ros{\l}anowski}
\address{Department of Mathematics\\
 University of Nebraska at Omaha\\
 Omaha, NE 68182-0243, USA}
\email{roslanow@member.ams.org}
\urladdr{http://www.unomaha.edu/logic}

\author {Saharon Shelah}
\address{Einstein Institute of Mathematics\\
Edmond J. Safra Campus, Givat Ram\\
The Hebrew University of Jerusalem\\
Jerusalem, 91904, Israel\\
and \\
Department of Mathematics\\
Hill Center - Busch Campus \\ 
Rutgers, The State University of New Jersey \\
110 Frelinghuysen Road \\
Piscataway, NJ 08854-8019 USA}
\email{shelah@math.huji.ac.il}
\urladdr{http://shelah.logic.at}
\thanks{This research was supported by the United States-Israel
Binational Science Foundation. Publication 957.}

\subjclass{Primary 03E05; Secondary: 03E02, 05D10, 54D80}
\date{January, 2011}

\begin{abstract}  
We show a general scheme of Ramsey-type results for partitions of countable
sets of finite functions, where ``one piece is big'' is interpreted in the
language originating in creature forcing. The heart of our proofs follows
Glazer's proof of the Hindman Theorem, so we prove the existence of
idempotent ultrafilters with respect to suitable operation. Then we deduce
partition theorems related to creature forcings.
\end{abstract}

\maketitle
\numberwithin{equation}{section}
\setcounter{section}{-1}

\section {Introduction}
A typical partition theorem asserts that if a set with some structure is
divided into some number of ``nice'' pieces, then one of the pieces is large
from the point of the structure under considerations. Sometimes, the
underlying structure is complicated and it is not immediately visible that
the arguments in hands involve a partition theorem. Such is the case with many
forcing arguments. For instance, the proofs of propernes of some forcing
notions built according to the scheme of {\em norms on possibilities\/} have
in their hearts partition theorems stating that at some situations a
homogeneous tree and/or a sequence of creatures determining a condition can
be found (see, e.g., Ros{\l}anowski and Shelah \cite{RoSh:470, RoSh:736}, 
Ros{\l}anowski, Shelah and Spinas \cite{RShS:941}, Kellner and Shelah
\cite{KrSh:872, KrSh:961}). A more explicit connection of partition theorems
with forcing arguments is given in Shelah and Zapletal \cite{ShZa:952}.

The present paper is a contribution to the Ramsey theory in the context of
finitary creature forcing. We are motivated by earlier papers and notions
concerning {\em norms on possibilities}, but we do not look at possible
forcing consequences. The common form of our results here is as follows. If
a certain family of partial finite functions is divided into finitely many
pieces, then one of the pieces contains all partial functions determined by
an object (``a pure candidate'') that can be interpreted as a forcing
condition if we look at the setting from the point of view of the creature
forcing. Sets of partial functions determined by a pure candidate might be
considered as ``large'' sets.

Our main proofs are following the celebrated Glazer's proof of the Hindman
Theorem, which reduced the problem to the existence of a relevant
ultrafilter on $\omega$ in ZFC. Those arguments were presented by Comfort in
\cite[Theorem 10.3, p.451]{Cmf77} with \cite[Lemma 10.1, p.449]{Cmf77} as a
crucial step (stated here in \ref{z4}). The arguments of the second section
of our paper really resemble Glazer's proof. In that section we deal with
the easier case of omittory--like creatures ({\em loose FFCC pairs\/} of
\ref{g1}(2)) and in the proof of the main conclusion (\ref {g11}) we use an
ultrafilter idempotent with respect to operation $\oplus$ (defined in
\ref{g6}). The third section deals with the case of {\em tight FFCC pairs}
of \ref{g1}(4). Here, we consider partitions of some sets of partial
functions all of which have domains being essentially intervals of integers
starting with some fixed $n<\omega$. While the general scheme of the
arguments follows the pattern of the second section, they are slightly more
complicated as they involve {\em sequences of ultrafilters\/} and operations
on them. As an application of this method, in \ref{CS6.3new} we give a new
proof of a partition theorem by Carlson and Simpson \cite[Theorem
6.3]{CS84}. The next section presents a variation of the third section:
under weaker assumptions on the involved FFCC pairs we get weaker, yet still
interesting partition theorem. Possible applications of this weaker version
include a special case of the partition theorem by Goldstern and Shelah
\cite{GoSh:388} (see \ref{case388}). These results motivate the fourth
section, where we develop the parallel of the very weak bigness for
candidates with ``limsup'' demand on the norms.
\medskip

Our paper is self-contained and all needed
``creature terminology'' is introduced in the first section. We also give
there several examples of creating pairs to which our results may be
applied.  
\medskip

\noindent {\bf Notation}:\quad We use standard set-theoretic notation. 

$\bullet$ An integer $n$ is the set $\{0,1,\ldots,n-1\}$ of all integers
smaller than $n$, and the set of all integers is called $\omega$. For
integers $n<m$, the interval $[n,m)$ denotes the set of all {\em integers\/}
smaller than $m$ and greater than or equal to $n$. 

$\bullet$ All sequences will be indexed by natural numbers and a sequence of
objects is typically denoted by a bar above a letter with the convention
that $\bar{x}=\langle x_i:i<y\rangle$, $y\leq \omega$. 

$\bullet$ For a set $X$ the family of all subsets of $X$ is denoted by
${\mathcal P}(X)$. The domain of a function $f$ is called $\dom(f)$. 

$\bullet$ An ideal $J$ on $\omega$ is a family of subsets of $\omega$ such
that 
\begin{enumerate}
\item[(i)] all finite subsets of $\omega$ belong to $J$ but $\omega\notin
  J$, and 
\item[(ii)] if $A\subseteq B\in J$, then $A\in J$ and if $A,B\in J$ then
  $A\cup B\in J$. 
\end{enumerate}  
For an ideal $J$, the family of all subsets of $\omega$ that do not belong
to $J$ is denoted by $J^+$, and the filter dual to $J$ is called $J^c$. 

\section{Partial creatures}
We use the context and notation of Ros{\l}anowski and Shelah
\cite{RoSh:470}, but below we recall all the required definitions and
concepts. 

Since we are interested in Ramsey-type theorems and ultrafilters on a
countable set of partial functions, we will use pure candidates rather than
forcing notions generated by creating pairs. Also, our considerations will
be restricted to creating pairs which are forgetful, smooth
(\cite[1.2.5]{RoSh:470}), monotonic (\cite[5.2.3]{RoSh:470}), strongly
finitary (\cite[1.1.3, 3.3.4]{RoSh:470}) and in some cases omittory--like
(\cite[2.1.1]{RoSh:470}). Therefore we will reformulate our definitions for
this restricted context (in particular, $\val[t]$ is a set of {\em
  partial\/} functions), thus we slightly depart from the setting of
\cite{RoSh:470}.

\begin{context}
\label{context}
In this paper $\bH$ is a fixed function defined on $\omega$ and such that
$\bH(i)$ is a finite non-empty set for each $i<\omega$. The set of all
finite non-empty functions $f$ such that $\dom(f)\subseteq \omega$ and
$f(i)\in \bH(i)$ (for all $i\in\dom(f)$) will be denoted by $\cF_\bH$.
\end{context}

\begin{definition}
\label{g1}
\begin{enumerate}
\item An FP creature\footnote{FP stands for {\em {\bf F}orgetful {\bf
        P}artial creature}} for $\bH$ is a tuple 
\[t=(\nor,\val,\dis,m_\dn,m_\up)=
(\nor[t],\val[t],\dis[t],m^t_\dn,m^t_\up)\]
such that 
\begin{itemize}
\item $\nor$ is a non-negative real number, $\dis$ is an arbitrary object
  and $m^t_\dn<m^t_\up<\omega$ and  
\item $\val$ is a non-empty finite subset of $\cF_\bH$ such that
  $\dom(f)\subseteq [m^t_\dn,m^t_\up)$ for all $f\in\val$.   
\end{itemize}
\item An FFCC pair\footnote{FFCC stands for {\em smooth {\bf F}orgetful
monotonic strongly {\bf F}initary {\bf C}reature {\bf C}reating pair}} for
$\bH$ is a pair $(K,\Sigma)$ such that   
\begin{enumerate}
\item[(a)]  $K$ is a countable family of FP creatures for $\bH$, 
\item[(b)]  for each $m<\omega$ the set $K_{\le m} := \{t\in K:m^t_\up \le
  m\}$ is finite  and the set $K_{\ge m} := \{t\in K:m^t_\dn\ge m\ \&\
  \nor[t]\geq m\}$ is infinite,
\item[(c)]  $\Sigma$ is a function with the domain $\dom(\Sigma)$ included
  in the set 
\[\{(t_0,\ldots,t_n): n<\omega,\ t_\ell \in K\mbox{ and } 
m^{t_\ell}_\up\leq m^{t_{\ell+1}}_\dn \mbox{ for }\ell < n\}\]  
and the range included in ${\mathcal P}(K)\setminus\{\emptyset\}$,
\item[(d)] if $t\in\Sigma(t_0,\ldots,t_n)$ then ($t\in K$ and)
  $m^{t_0}_\dn=m^t_\dn<m^t_\up=m^{t_n}_\up$, 
\item[(e)] $t\in \Sigma(t)$ (for each $t\in K$) and 
\item[(f)] if $t\in\Sigma(t_0,\ldots,t_n)$ and $f \in\val[t]$, then 
\[\dom(f)\subseteq \bigcup\{[m^{t_\ell}_\dn,m^{t_\ell}_\up):\ell\le n\}\]
and $f\rest [m^{t_\ell}_\dn, m^{t_\ell}_\up)\in\val[t_\ell]\cup
\{\emptyset\}$ for $\ell\leq n$, and 
\item[(g)] if $\bar{t}_0,\ldots,\bar{t}_n \in \dom(\Sigma)$ and $\bar{t}=
\bar{t}_0\conc \ldots \conc \bar{t}_n\in\dom(\Sigma)$, then
\[\bigcup\{\Sigma(s_0,\ldots,s_n):s_\ell\in \Sigma(\bar{t}_\ell)\mbox{ for }
\ell\le n\} \subseteq \Sigma(\bar{t}).\]  
\end{enumerate}
\item An FFCC pair $(K,\Sigma)$ is {\em loose\/} if 
\begin{enumerate}
\item[(c$^{\rm loose}$)]  the domain of $\Sigma$ is
\[\dom(\Sigma)=\{(t_0,\ldots,t_n): n<\omega,\ t_\ell \in K\mbox{ and }  
m^{t_\ell}_\up\leq m^{t_{\ell+1}}_\dn \mbox{ for }\ell < n\}.\]  
\end{enumerate}
\item An FFCC pair $(K,\Sigma)$ is {\em tight\/} if 
\begin{enumerate}
\item[(c$^{\rm tight}$)]  the domain of $\Sigma$ is
\[\dom(\Sigma)=\{(t_0,\ldots,t_n): n<\omega,\ t_\ell \in K\mbox{ and }  
m^{t_\ell}_\up=m^{t_{\ell+1}}_\dn \mbox{ for }\ell < n\},\]  
\item[(f$^{\rm tight}$)] if $t\in\Sigma(t_0,\ldots,t_n)$ and $f \in\val[t]$,
  then $f\rest [m^{t_\ell}_\dn,m^{t_\ell}_\up)\in \val[t_\ell]$ for all
  $\ell\leq n$, and  
\item[(h$^{\rm tight}$)] if $s_0,s_1\in K$, $m^{s_0}_\up=m^{s_1}_\dn$,
  $f_0\in\val[s_0]$, $f_1\in \val[s_1]$ and $f=f_0\cup f_1$, then there is
  $s\in\Sigma(s_0,s_1)$ such that $f\in\val[s]$. 
\end{enumerate}
\end{enumerate}
\end{definition} 

\begin{definition}
[Cf. {\cite[Definition 1.2.4]{RoSh:470}}]
\label{g3}
Let $(K,\Sigma)$ be an FFCC pair for $\bH$. 
\begin{enumerate}
\item A {\em pure candidate for $(K,\Sigma)$\/} is a sequence
  $\bar{t}=\langle t_n:n < \omega\rangle$ such that $t_n \in K$,
  $m^{t_n}_\up\le m^{t_{n+1}}_\dn$ (for $n<\omega$) and
  $\lim\limits_{n\to\infty} \nor[t_n]=\infty$.\\
A pure candidate $\bar{t}$ is {\em tight\/} if $m^{t_n}_\up
=m^{t_{n+1}}_\dn$ (for $n<\omega$).\\ 
The set of all pure candidates for $(K,\Sigma)$ is denoted by
$\PC(K,\Sigma)$ and the family of all tight pure candidates is called
$\PCt(K,\Sigma)$. 
\item For pure candidates $\bar{t},\bar{s}\in \PC(K,\Sigma)$ we write
$\bar{t} \le \bar{s}$ whenever there is a sequence $\langle u_n:n <
\omega\rangle$ of non-empty finite subsets of $\omega$ satisfying
\[\max(u_n)<\min(u_{n+1})\ \mbox{ and }\ \bar{s}_n \in
\Sigma(\bar{t}\rest u_n) \qquad \mbox{ for all }n<\omega.\]
\item For a pure candidate $\bar{t}=\langle t_i:i<\omega\rangle\in
\PC(K,\Sigma)$ we define  
\begin{enumerate}
\item[(a)] $\cS(\bar{t})=\{(t_{i_0},\ldots,t_{i_n}):i_0<\ldots <i_n <\omega$ 
for some $n <\omega\}$, and 
\item[(b)] $\Sigma'(\bar{t})=\bigcup\{\Sigma(\bar{s}): \bar{s}\in
\cS(\bar{t})\}$ and $\Sigt(\bar{t})=\bigcup\{\Sigma(t_0, \ldots,t_n):
n<\omega\}$, 
\item[(c)] $\pos(\bar{t})=\bigcup\{\val[s]:s \in \Sigma'(\bar{t})\}$ and  
$\post(\bar{t})=\bigcup\{\val[s]:s\in\Sigt(\bar{t})\}$,
\item[(d)] $\bar{t}\upl n=\langle t_{n+k}:k<\omega\rangle$.
\end{enumerate}
\end{enumerate}
(Above, if $\bar{s}\notin \dom(\Sigma)$ then we stipulate
$\Sigma(\bar{s})=\emptyset$.) 
\end{definition}

\begin{remark}
\label{rem2}
{\em Loose {\rm FFCC}} and {\em tight {\rm FFCC}} are the two cases of FFCC
pairs treated in this article. The corresponding partition theorems will be 
slightly different in the two cases, though there is a parallel. In the
loose case we will deal with $\Sigma'(\bar{t})$, $\pos(\bar{t})$ and
ultrafilters on the latter set. In the tight case we will use
$\Sigt(\bar{t})$, $\post(\bar{t})$ and {\em sequences\/} of ultrafilters on
$\post(t\upl n)$ (for $n<\omega$).

We will require two additional properties from $(K,\Sigma)$: {\em weak
  bigness\/} and {\em weak additivity\/} (see \ref{defadd},
\ref{defbig}). Because of the differences in the treatment of the two cases, 
there are slight differences in the formulation of these properties, so we
have two variants for each: {\em \ttl--variant\/} and {\em \ttt--variant\/} 
(where ``\ttl'' stands for ``loose'' and ``\ttt'' stands for ``tight'', of
course).  
  
Plainly, $\PCt(K,\Sigma)\subseteq \PC(K,\Sigma)$, $\Sigt(\bar{t}) 
\subseteq \Sigma'(\bar{t})$ and $\post(\bar{t})\subseteq
\pos(\bar{t})$. Also, if $\bar{t}\in\PCt(K,\Sigma)$, then $\bar{t}\upl n\in 
\PCt(K,\Sigma)$ for all $n<\omega$. 
\end{remark}

\begin{definition}
\label{defadd}
Let $(K,\Sigma)$ be an FFCC pair for $\bH$ and $\bar{t}= \langle
t_i:i<\omega\rangle\in\PC(K,\Sigma)$.   
\begin{enumerate}
\item We say that the pair $(K,\Sigma)$ {\em has weak \ttl--additivity for  
  the candidate $\bar{t}$\/} if for some increasing $\bfun:\omega 
\longrightarrow\omega$, for every $m<\omega$ we have:\\  
if $s_0,s_1\in \Sigma'(\bar{t})$, $\nor[s_0]\ge \bfun(m)$, $m^{s_0}_\dn\ge 
\bfun(m)$, $\nor[s_1]\ge\bfun(m^{s_0}_\up)$ and $m^{s_1}_\dn >
\bfun(m^{s_0}_\up)$, then we can find $s\in \Sigma'(\bar{t})$ such that
\[m^s_\dn\ge m,\ \nor[s] \ge m,\mbox{ and } \val[s]\subseteq \{f \cup g:f \in 
\val[s_0],\ g\in\val[s_1]\}.\]
\item The pair $(K,\Sigma)$ {\em has weak \ttt--additivity for
  the candidate $\bar{t}$\/} if for some increasing $\bfun:\omega
\longrightarrow\omega$, for every $n,m<\omega$ we have:\\  
if $s_0\in \Sigma(t_n,\ldots,t_k)$, $k\geq n$, $\nor[s_0]\ge\bfun(n+m)$,
$s_1\in\Sigma(t_{k+1},\ldots,t_\ell)$, $\nor[s_1]\ge\bfun(k+m)$ and
$\ell>k$, then we can find $s\in \Sigma(t_n,\ldots,t_\ell)$ such that
$\nor[s]\ge m$ and $\val[s]\subseteq \{f\cup g:f\in\val[s_0],\ g\in
\val[s_1]\}$.  
\item The pair $(K,\Sigma)$ {\em has \ttl--additivity\/} if for all
  $s_0,s_1\in K$ with $\nor[s_0],\nor[s_1]>1$ and $m^{s_0}_\up\leq
  m^{s_1}_\up$ there is $s\in\Sigma(s_0,s_1)$ such that 
\[\nor[s] \geq \min\{\nor[s_0],\nor[s_1]\}-1\ \mbox{ and }\ \val[s]
\subseteq \{f \cup g:f \in  \val[s_0],\ g\in\val[s_1]\}.\] 
The pair $(K,\Sigma)$ {\em has \ttt--additivity\/} if for all 
  $s_0,s_1\in K$ with $\nor[s_0],\nor[s_1]>1$ and $m^{s_0}_\up=m^{s_1}_\up$
  there is $s\in\Sigma(s_0,s_1)$ such that 
\[\nor[s] \geq\min\{\nor[s_0],\nor[s_1]\}-1.\] 
We say that $(K,\Sigma)$ {\em has \ttt--multiadditivity\/} if for all
$s_0,\ldots,s_n\in K$ with $m^{s_\ell}_\up=m^{s_{\ell+1}}_\dn$ (for
$\ell<n$) there is $s\in \Sigma(s_0,\ldots,s_n)$ such that $\nor[s]\geq
\max\{\nor[s_\ell]:\ell\leq n\}-1$.  
\end{enumerate}
\end{definition}

\begin{definition}
\label{defbig}
Let $(K,\Sigma)$ be an FFCC pair for $\bH$ and $\bar{t}= \langle
t_i:i<\omega\rangle\in\PC(K,\Sigma)$.   
\begin{enumerate}
\item We say that the pair $(K,\Sigma)$ {\em has weak \ttl--bigness for
  the candidate $\bar{t}$\/} whenever the following property is satisfied:
\begin{enumerate}
\item[$(\circledast)^{\bar{t}}_{\ttl}$] if $n_1,n_2,n_3<\omega$ and
  $\pos(\bar{t}) = \bigcup\{\cF_\ell:\ell<n_1\}$, then for some $s\in
  \Sigma'(\bar{t})$ and $\ell<n_1$ we have  
\[\nor[s] \ge n_2,\ m^s_\dn\ge n_3, \ \mbox{ and } \val[s]\subseteq \cF_\ell.\]
\end{enumerate}
\item We say that the pair $(K,\Sigma)$ {\em has weak \ttt--bigness for
  the candidate $\bar{t}$\/} whenever the following property is satisfied:
\begin{enumerate}
\item[$(\circledast)^{\bar{t}}_{\ttt}$] if $n,n_1,n_2<\omega$ and
  $\post(\bar{t} \upl n) = \bigcup\{\cF_\ell:\ell < n_1\}$, then
  for some $s\in \Sigt(\bar{t}\upl n)$ and $\ell<n_1$ we have  
\[\nor[s] \ge n_2\mbox{ and } \val[s]\subseteq \cF_\ell.\]
\end{enumerate}
\item We say that the pair $(K,\Sigma)$ {\em has bigness\/} if for every 
  creature $t\in K$ with $\nor[t] > 1$ and a partition $\val[t]= F_1 \cup
  F_2$, there are $\ell\in\{1,2\}$ and $s \in \Sigma(t)$ such that
  $\nor[s] \ge \nor[t]-1$ and $\val[s]\subseteq F_\ell$.
\end{enumerate}
\end{definition} 

\begin{definition}
\label{simple}
Let $(K,\Sigma)$ be an FFCC pair for $\bH$.
\begin{enumerate}
\item $(K,\Sigma)$ is {\em simple except omitting\/} if for every  
  $(t_0,\ldots,t_n)\in\dom(\Sigma)$ and $t\in\Sigma(t_0,\ldots,t_n)$ for
  some $\ell\leq n$ we have $\val[t]\subseteq \val[t_\ell]$. 
\item $(K,\Sigma)$ is {\em gluing on a candidate $\bar{t}= \langle
t_i:i<\omega\rangle\in\PC(K,\Sigma)$} if for every $n,m<\omega$ there are
$k\geq n$ and $s\in\Sigma(t_n,\ldots,t_k)$ such that $\nor[s]\geq m$.    
\end{enumerate}
\end{definition}

The following two observations summarize the basic dependencies between the
notions introduced in \ref{defadd}, \ref{defbig} --- separately for the two
contexts (see \ref{rem2}).

\begin{observation}
\label{obsone}
Assume $(K,\Sigma)$ is a loose FFCC pair, $\bar{t}\in\PC(K,\Sigma)$.
\begin{enumerate}
\item If  $(K,\Sigma)$ has bigness (\ttl--additivity, respectively), then it
  has weak \ttl--bigness (weak \ttl--additivity, respectively) for the
  candidate $\bar{t}$. 
\item If $(K,\Sigma)$ has the weak \ttl--bigness for $\bar{t}$, $k<\omega$
  and $\pos(\bar{t})= \bigcup\limits_{\ell< k} \cF_\ell$, then for some
  $\bar{s}\in \PC(K,\Sigma)$ and $\ell<k$ we have  
\[\bar{t}\le \bar{s}\mbox{ and }(\forall n < \omega)(\val[s_n]\subseteq
\cF_\ell).\] 
\item Assume that $(K,\Sigma)$ has the weak \ttl--bigness property for
  $\bar{t}\in \PC(K,\Sigma)$ and it is simple except omitting. Let
  $k<\omega$ and $\pos(\bar{t})= \bigcup\limits_{\ell<k}\cF_\ell$. Then for
  some $\bar{s}\geq \bar{t}$ and $\ell<k$ we have $\pos(\bar{s}) \subseteq
  \cF_\ell$.     
\end{enumerate}
\end{observation}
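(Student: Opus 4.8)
The plan is to treat the three parts in order; part~(2) is independent of part~(1), and part~(3) invokes part~(2) as a black box together with \ref{simple}(1).

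\emph{Part (1).} For ``bigness $\Rightarrow$ weak \ttl--bigness for $\bar t$'': given a partition $\pos(\bar t)=\bigcup_{\ell<n_1}\cF_\ell$ and parameters $n_2,n_3$, I would use that $\nor[t_i]\to\infty$ and that $\langle m^{t_i}_\dn:i<\omega\rangle$ is strictly increasing (since $m^{t_i}_\dn<m^{t_i}_\up\le m^{t_{i+1}}_\dn$) to pick an index $i$ with $\nor[t_i]\ge n_1+n_2+1$ and $m^{t_i}_\dn\ge n_3$. Then $\val[t_i]\subseteq\pos(\bar t)$ is covered by the $\le n_1$ sets $\cF_\ell\cap\val[t_i]$, and I would peel these off one at a time using bigness (\ref{defbig}(3)), each step producing a creature one level lower with norm dropping by at most $1$ and staying inside $\Sigma(t_i)$ --- this last point being axiom \ref{g1}(g) applied to the one-element block $(t_i)$, which gives $\Sigma(u)\subseteq\Sigma(t_i)$ whenever $u\in\Sigma(t_i)$. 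After at most $n_1-1$ steps one reaches $s\in\Sigma(t_i)\subseteq\Sigma'(\bar t)$ with $\val[s]\subseteq\cF_\ell$ for some $\ell$, $\nor[s]\ge n_2$, and $m^s_\dn=m^{t_i}_\dn\ge n_3$ (by \ref{g1}(d)). For ``\ttl--additivity $\Rightarrow$ weak \ttl--additivity for $\bar t$'' I would take the witness $\bfun(k)=k+2$; then for $s_0,s_1\in\Sigma'(\bar t)$ obeying the hypotheses, the norm demands give $\nor[s_0],\nor[s_1]>1$ and the position demands give $m^{s_0}_\up<m^{s_1}_\dn\le m^{s_1}_\up$, so \ttl--additivity (\ref{defadd}(3)) yields $s\in\Sigma(s_0,s_1)$ with $\nor[s]\ge\min\{\nor[s_0],\nor[s_1]\}-1\ge m$, $\val[s]\subseteq\{f\cup g:f\in\val[s_0],\ g\in\val[s_1]\}$, and $m^s_\dn=m^{s_0}_\dn\ge m$. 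The only genuine point is that $s\in\Sigma'(\bar t)$: writing $s_0\in\Sigma(\bar t\rest u)$, $s_1\in\Sigma(\bar t\rest v)$ with $\bar t\rest u,\bar t\rest v\in\cS(\bar t)$, the inequality $m^{s_0}_\up<m^{s_1}_\dn$, together with \ref{g1}(d) and the fact that $\bar t$ is a pure candidate, forces $\max u<\min v$, so $\bar t\rest(u\cup v)\in\cS(\bar t)$ and \ref{g1}(g) gives $\Sigma(s_0,s_1)\subseteq\Sigma(\bar t\rest(u\cup v))\subseteq\Sigma'(\bar t)$.

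\emph{Part (2).} I would construct $\langle s_m,u_m:m<\omega\rangle$ by recursion. Having chosen $s_0,\dots,s_{m-1}$ with $\max u_j<\min u_{j+1}$ for $j<m-1$ and $s_j\in\Sigma(\bar t\rest u_j)$, apply weak \ttl--bigness ($(\circledast)^{\bar t}_{\ttl}$ of \ref{defbig}(1)) to the given partition with $n_2=m$ and $n_3=1+\max u_{m-1}$ (and $n_3=0$ if $m=0$), obtaining $s_m\in\Sigma'(\bar t)$ with $\nor[s_m]\ge m$, $m^{s_m}_\dn\ge 1+\max u_{m-1}$, and $\val[s_m]\subseteq\cF_{\ell_m}$ for some $\ell_m<k$; write $s_m\in\Sigma(\bar t\rest u_m)$ with $\bar t\rest u_m\in\cS(\bar t)$, and note that the lower bound on $m^{s_m}_\dn$ forces $\min u_m>\max u_{m-1}$ by the same support argument as in part~(1). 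This yields $\bar s^\ast=\langle s_m:m<\omega\rangle\in\PC(K,\Sigma)$ with $\bar t\le\bar s^\ast$ witnessed by $\langle u_m\rangle$. Finally, since $k<\omega$, fix by pigeonhole an $\ell<k$ with $A=\{m:\ell_m=\ell\}$ infinite, enumerate $A=\{m_0<m_1<\cdots\}$, and put $\bar s=\langle s_{m_j}:j<\omega\rangle$; this is the required candidate, as $\bar t\le\bar s$ is witnessed by $\langle u_{m_j}:j<\omega\rangle$ and $\val[s_{m_j}]\subseteq\cF_\ell$ for all $j$.

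\emph{Part (3).} Apply part~(2) to get $\bar s\ge\bar t$ and $\ell<k$ with $\val[s_n]\subseteq\cF_\ell$ for every $n$. For an arbitrary $u\in\Sigma'(\bar s)$, say $u\in\Sigma(s_{i_0},\dots,s_{i_p})$, the assumption that $(K,\Sigma)$ is simple except omitting (\ref{simple}(1)) gives some $j\le p$ with $\val[u]\subseteq\val[s_{i_j}]\subseteq\cF_\ell$, whence $\pos(\bar s)=\bigcup\{\val[u]:u\in\Sigma'(\bar s)\}\subseteq\cF_\ell$. Among all of this, the step I expect to need the most care is the verification in part~(1) that the creature produced by \ttl--additivity still lies in $\Sigma'(\bar t)$, i.e.\ that the supports of $s_0$ and $s_1$ trace back to disjoint consecutive sub-blocks of $\bar t$ so that axiom \ref{g1}(g) applies; the remaining arguments are pigeonhole together with the closure axioms \ref{g1}(d),(g) and the definition of ``simple except omitting''.
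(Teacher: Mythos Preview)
The paper states this as an observation without proof, so there is no authorial argument to compare against; your outline is essentially a correct and complete proof along the expected lines.

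One small bookkeeping slip in part~(2): your choice $n_3=1+\max u_{m-1}$ mixes indices into $\bar t$ with positions in $\omega$. The weak \ttl--bigness gives you $m^{s_m}_\dn\ge n_3$, and what you need from this is $\min u_m>\max u_{m-1}$; but $m^{t_i}_\dn$ can be much larger than $i$, so a lower bound of the form ``$\ge 1+\text{(an index)}$'' on $m^{s_m}_\dn$ tells you nothing about the index $\min u_m$. The fix is immediate: take instead $n_3=m^{s_{m-1}}_\up$ (equivalently $n_3=m^{t_{\max u_{m-1}}}_\up$). Then $m^{t_{\min u_m}}_\dn=m^{s_m}_\dn\ge m^{t_{\max u_{m-1}}}_\up>m^{t_{\max u_{m-1}}}_\dn$, and strict monotonicity of $i\mapsto m^{t_i}_\dn$ yields $\min u_m>\max u_{m-1}$, exactly as in your part~(1) support argument. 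Everything else stands.
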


\begin{observation}
\label{obsbis}
Assume $(K,\Sigma)$ is a tight FFCC pair, $\bar{t}\in\PCt(K,\Sigma)$.
\begin{enumerate}
\item If $(K,\Sigma)$ has bigness and is gluing on $\bar{t}$, then it 
  has the weak \ttt--bigness for the candidate $\bar{t}$. 
\item If $(K,\Sigma)$ has \ttt--additivity, then it has the weak
  \ttt--additivity for $\bar{t}$. 
\item If $(K,\Sigma)$ has the \ttt--multiadditivity, then it has the
  \ttt--additivity and it is gluing on $\bar{t}$.
\end{enumerate}
\end{observation}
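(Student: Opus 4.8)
The plan is to verify the three clauses one at a time, directly from the definitions; the main tools are the ``transitivity'' of $\Sigma$ along the candidate and the restriction clauses (f) and (f$^{\rm tight}$) of Definition~\ref{g1}. Two preliminary observations will be used throughout. First, since $(K,\Sigma)$ is tight and $\bar{t}\in\PCt(K,\Sigma)$, every tuple $(t_n,\dots,t_k)$ with $n\le k$ lies in $\dom(\Sigma)$, one has $\Sigt(\bar{t}\upl n)=\bigcup\{\Sigma(t_n,\dots,t_k):k\ge n\}$, and by clause~(e) together with the one-block instance of clause~(g), $u\in\Sigma(t_n,\dots,t_k)$ implies $\Sigma(u)\subseteq\Sigma(t_n,\dots,t_k)\subseteq\Sigt(\bar{t}\upl n)$. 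Second, for a tight pair the $\val$-part of the conclusion of weak \ttt--additivity is automatic: if $s\in\Sigma(s_0,s_1)$ and $h\in\val[s]$, then (f$^{\rm tight}$) gives $h\rest[m^{s_0}_\dn,m^{s_0}_\up)\in\val[s_0]$ and $h\rest[m^{s_1}_\dn,m^{s_1}_\up)\in\val[s_1]$, while (f) confines $\dom(h)$ to the union of these two adjacent disjoint intervals, so $h$ is the union of those two restrictions. Granting this, part~(3) is immediate: \ttt--multiadditivity applied with one block ($n=1$) yields $s\in\Sigma(s_0,s_1)$ with $\nor[s]\ge\max\{\nor[s_0],\nor[s_1]\}-1\ge\min\{\nor[s_0],\nor[s_1]\}-1$, which is \ttt--additivity; and to get gluing on $\bar{t}$, given $n,m<\omega$ use $\lim_i\nor[t_i]=\infty$ to choose $k\ge n$ with $\nor[t_k]\ge m+1$, so that \ttt--multiadditivity produces $s\in\Sigma(t_n,\dots,t_k)$ with $\nor[s]\ge\nor[t_k]-1\ge m$.

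For part~(2) I would take $\bfun(j)=j+2$, which is increasing. Suppose $s_0\in\Sigma(t_n,\dots,t_k)$ with $\nor[s_0]\ge\bfun(n+m)$, $s_1\in\Sigma(t_{k+1},\dots,t_\ell)$ with $\nor[s_1]\ge\bfun(k+m)$, and $\ell>k$. Tightness of $\bar{t}$ forces $m^{s_0}_\up=m^{t_k}_\up=m^{t_{k+1}}_\dn=m^{s_1}_\dn$, so $(s_0,s_1)\in\dom(\Sigma)$ and $\nor[s_0],\nor[s_1]\ge 2>1$; hence \ttt--additivity gives $s\in\Sigma(s_0,s_1)$ with $\nor[s]\ge\min\{\nor[s_0],\nor[s_1]\}-1\ge\bfun(n+m)-1\ge m$ (using $k\ge n$ and monotonicity of $\bfun$). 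The instance of clause~(g) for the two blocks $(t_n,\dots,t_k)$ and $(t_{k+1},\dots,t_\ell)$, whose concatenation lies in $\dom(\Sigma)$, shows $\Sigma(s_0,s_1)\subseteq\Sigma(t_n,\dots,t_\ell)$, so $s\in\Sigma(t_n,\dots,t_\ell)$; and $\val[s]\subseteq\{f\cup g:f\in\val[s_0],\,g\in\val[s_1]\}$ by the second preliminary observation.

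For part~(1), fix $n,n_1,n_2<\omega$ with $\post(\bar{t}\upl n)=\bigcup_{\ell<n_1}\cF_\ell$; since $t_n\in\Sigma(t_n)$ we have $\emptyset\ne\val[t_n]\subseteq\post(\bar{t}\upl n)$, so we may assume $n_1\ge 1$. Using gluing on $\bar{t}$ pick $k\ge n$ and $s_0\in\Sigma(t_n,\dots,t_k)\subseteq\Sigt(\bar{t}\upl n)$ with $\nor[s_0]\ge n_1+n_2$. As $\val[s_0]\subseteq\post(\bar{t}\upl n)$, the given partition restricts to a partition of $\val[s_0]$ into the $n_1$ sets $\val[s_0]\cap\cF_\ell$, and I would iterate the two-piece bigness of Definition~\ref{defbig}(3), each step peeling one $\cF_\ell$ off the current creature's value set. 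Each step stays inside $\Sigma(s_0)\subseteq\Sigma(t_n,\dots,t_k)$ and lowers the norm by at most $1$, and at most $n_1-1$ steps bring the value set into a single $\cF_\ell$; the norm never reaches $1$ along the way, because after $j<n_1-1$ steps it is at least $n_1+n_2-j\ge n_2+2$. We therefore end with $s\in\Sigt(\bar{t}\upl n)$ and $\ell<n_1$ satisfying $\val[s]\subseteq\cF_\ell$ and $\nor[s]\ge n_1+n_2-(n_1-1)=n_2+1\ge n_2$, that is, $(\circledast)^{\bar{t}}_{\ttt}$.

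None of the steps is deep. The point demanding the most attention is the norm bookkeeping in~(1): one must fix the initial norm bound ($n_1+n_2$ works) so that the two-piece bigness can be iterated the required number of times without the norm dropping to $1$, and one must keep track of the fact that every intermediate creature stays in $\Sigma(t_n,\dots,t_k)\subseteq\Sigt(\bar{t}\upl n)$ via clause~(g). In~(2) the only subtlety is recognizing that for tight pairs the $\val$-inclusion comes for free from (f) and (f$^{\rm tight}$), so that the local notion of \ttt--additivity already carries all the content one needs once the combined creature has been pulled back into $\Sigma(t_n,\dots,t_\ell)$.
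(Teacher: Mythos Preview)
The paper gives no proof for this Observation; it is left to the reader as a routine verification from the definitions. Your argument is correct and is exactly the expected one: gluing produces a creature of large enough norm on which two-piece bigness can be iterated (part~(1)); \ttt--additivity combined with clause~(g) and the automatic $\val$-inclusion coming from (f$^{\rm tight}$) yields weak \ttt--additivity (part~(2)); and the specializations of \ttt--multiadditivity to two blocks and to a block containing a high-norm $t_k$ give \ttt--additivity and gluing respectively (part~(3)).
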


In the following two sections we will present partition theorems for the
loose and then for the tight case. First, let us offer some easy examples to 
which the theory developed later can be applied.

\begin{example}
\label{ex1.10}
Let $\bH_1(n)=n+1$ for $n<\omega$ and let $K_1$ consist of all FP creatures
$t$ for $\bH_1$ such that
\begin{itemize}
\item $\dis[t]=(u,i,A)=(u^t,i^t,A^t)$ where $u\subseteq [m^t_\dn,m^t_\up)$,
  $i\in u$, $\emptyset\neq A\subseteq \bH_1(i)$, 
\item $\nor[t]=\log_2(|A|)$,
\item $\val[t]\subseteq\prod\limits_{j\in u}\bH_1(j)$ is such that
  $\{f(i):f\in\val[t]\}=A$.  
\end{itemize}
For $t_0,\ldots,t_n\in K_1$ with $m^{t_\ell}_\up\leq m^{t_{\ell+1}}_\dn$ let
$\Sigma_1(t_0,\ldots,t_n)$ consist of all creatures $t\in K_1$ such that 
\[m^t_\dn=m^{t_0}_\dn,\ m^t_\up=m^{t_n}_\up,\ u^t=\bigcup\limits_{\ell\leq
  n} u^{t_\ell},\ i^t=i^{t_{\ell^*}},\ A^t\subseteq A^{t_{\ell^*}}\quad
\mbox{ for some }\ell^*\leq n,\]
and $\val[t]\subseteq\big\{f_0\cup\ldots\cup f_n:(f_0,\ldots,f_n)\in
\val[t_0]\times\ldots\times\val[t_n]\big\}$. \\ 
Also, let $\Sigma^*_1$ be $\Sigma_1$ restricted to the set of those tuples
$(t_0,\ldots,t_n)$ for which $m^{t_\ell}_\up=m^{t_{\ell+1}}_\dn$ (for
$\ell<n$). Then 
\begin{itemize}
\item $(K_1,\Sigma_1)$ is a loose FFCC pair for $\bH_1$ with bigness and
\ttl--additivity,
\item $(K_1,\Sigma_1^*)$ is a tight FFCC pair for $\bH_1$ with bigness and
  \ttt--multiadditivity, and it is gluing on every $\bar{t}\in\PCt(K_1, 
  \Sigma_1^*)$. 
\end{itemize}
\end{example}

\begin{example}
Let $\bH_2(n)=2$ for $n<\omega$ and let $K_2$ consist of all FP creatures
$t$ for $\bH_2$ such that
\begin{itemize}
\item $\emptyset\neq\dis[t]\subseteq [m^t_\dn,m^t_\up)$,
\item $\emptyset\neq\val[t]\subseteq {}^{\dis[t]}2$, 
\item $\nor[t]=\log_2(|\val[t]|)$.
\end{itemize}
For $t_0,\ldots,t_n\in K_2$ with $m^{t_\ell}_\up\leq m^{t_{\ell+1}}_\dn$ let 
$\Sigma_2(t_0,\ldots,t_n)$ consist of all creatures $t\in K_2$ such that 
\[m^t_\dn=m^{t_0}_\dn,\ m^t_\up=m^{t_n}_\up,\ \dis[t]=\dis[t_{\ell^*}],\
\mbox{ and }\val[t]\subseteq\val[t_{\ell^*}]\ \mbox{ for some
}\ell^*\leq n.\] 
Then $(K_2,\Sigma_2)$ is a loose FFCC pair for $\bH_1$ which is simple
except omitting and has bigness.
\end{example}

\begin{example}
\label{ex1.12}
Let $\bH$ be as in \ref{context} and let $K_3$ consist of all FP creatures
$t$ for $\bH$ such that
\begin{itemize}
\item $\emptyset\neq\dis[t]\subseteq [m^t_\dn,m^t_\up)$,
\item $\val[t]\subseteq \{f\in\cF_\bH:\dis[t]\subseteq\dom(f) \subseteq  
  [m^t_\dn,m^t_\up)\}$ satisfies 
\[(\forall g\in\prod\limits_{i\in\dis[t]}\bH(i))(\exists f\in \val[t])(g
\subseteq f),\]  
\item $\nor[t]=\log_{957}(|\dis[t]|)$.
\end{itemize}
For $t_0,\ldots,t_n\in K_2$ with $m^{t_\ell}_\up\leq m^{t_{\ell+1}}_\dn$ let 
$\Sigma_3(t_0,\ldots,t_n)$ consist of all creatures $t\in K_3$ such that 
\begin{itemize}
\item $m^t_\dn=m^{t_0}_\dn$, $m^t_\up=m^{t_n}_\up$, $\dis[t]\subseteq
  \bigcup\limits_{\ell\leq n}\dis[t_\ell]$, and
\item if $f \in\val[t]$, then $\dom(f)\subseteq \bigcup\{[m^{t_\ell}_\dn,
  m^{t_\ell}_\up):\ell\le n\}$ and $f\rest [m^{t_\ell}_\dn,
  m^{t_\ell}_\up)\in \val[t_\ell]\cup\{\emptyset\}$ for  all $\ell\leq n$. 
\end{itemize}
Also, for $t_0,\ldots,t_n\in K_2$ with $m^{t_\ell}_\up=m^{t_{\ell+1}}_\dn$ let 
$\Sigma_3^*(t_0,\ldots,t_n)$ consist of all creatures $t\in K_3$ such that  
\begin{itemize}
\item $m^t_\dn=m^{t_0}_\dn$, $m^t_\up=m^{t_n}_\up$, $\dis[t]\subseteq
  \bigcup\limits_{\ell\leq n}\dis[t_\ell]$, and
\item if $f \in\val[t]$, then $\dom(f)\subseteq \bigcup\{[m^{t_\ell}_\dn,
  m^{t_\ell}_\up):\ell\le n\}$ and $f\rest [m^{t_\ell}_\dn,
  m^{t_\ell}_\up)\in \val[t_\ell]$ for  all $\ell\leq n$. 
\end{itemize}
Then 
\begin{itemize}
\item $(K_3,\Sigma_3)$ is a loose FFCC pair for $\bH$ with bigness and
\ttl--additivity,
\item $(K_3,\Sigma_3^*)$ is a tight FFCC pair for $\bH$ with bigness and
\ttt--multiadditivity and it is gluing on every $\bar{t}\in\PCt(K_3, 
\Sigma_3^*)$.  
\end{itemize}
\end{example}

\begin{example}
\label{CarlSim}
Let $N>0$ and $\bH_N(n)=N$. Let $K_N$ consist of all FP creatures $t$ for
$\bH_N$ such that 
\begin{itemize}
\item $\dis[t]=(X_t,\varphi_t)$, where $X_t\subsetneq [m^t_\dn, m^t_\up)$,
and $\phi_t:X_t\longrightarrow N$,
\item $\nor[t]=m^t_\up$,
\item $\val[t]=\big\{f\in {}^{[m^t_\dn,m^t_\up)} N:\varphi_t\subseteq f$ and
  $f$ is constant on $[m^t_\dn,m^t_\up)\setminus X_t\;\big\}$.
\end{itemize}
For $t_0,\ldots,t_n\in K_N$ with $m^{t_\ell}_\up=m^{t_{\ell+1}}_\dn$ (for
$\ell<n$) we let $\Sigma_N(t_0,\ldots,t_n)$ consist of all creatures $t\in
K_N$ such that 
\begin{itemize}
\item $m^t_\dn=m^{t_0}_\dn$, $m^t_\up=m^{t_0}_\up$, $X_{t_0}\cup\ldots\cup
X_{t_n} \subseteq X_t$,
\item for each $\ell\leq n$,\\
either $X_t\cap [m^{t_\ell}_\dn,m^{t_\ell}_\up)=X_{t_\ell}$ and
$\varphi_t\rest [m^{t_\ell}_\dn,m^{t_\ell}_\up)=\varphi_{t_\ell}$,\\
or $[m^{t_\ell}_\dn,m^{t_\ell}_\up)\subseteq X_t$ and $\varphi_t\rest
[m^{t_\ell}_\dn,m^{t_\ell}_\up)\in \val[t_\ell]$. 
\end{itemize}
Then 
\begin{enumerate}
\item[(i)] $(K_N,\Sigma_N)$ is a tight FFCC pair for $\bH_N$,
\item[(ii)] it has the \ttt--multiadditivity and 
\item[(iii)] it has the weak \ttt--bigness and is gluing for every candidate
  $\bar{t}\in\PCt(K,\Sigma)$. 
\end{enumerate}
\end{example}

\begin{proof}
(i)\quad All demands in \ref{g1}(2,4) are easy to verify. For instance, to
check \ref{g1}(4)(h$^{\rm tight}$) note that:\\
if $s_0,s_1\in K_N$, $m^{s_0}_\up=m^{s_1}_\dn$, $f_\ell\in \val[s_\ell]$
(for $\ell=0,1$) and $s\in K_N$ is such that
\[m^s_\dn=m^{s_0}_\dn,\ m^s_\up=m^{s_1}_\up,\ X_s=X_{s_0}\cup
[m^{s_1}_\dn,m^{s_1}_\up),\ \varphi_s\rest X_{s_0}=\varphi_{s_0},\
\varphi_s\rest [m^{s_1}_\dn,m^{s_1}_\up)=f_1,\]
then $s\in \Sigma_N(s_0,s_1)$ and $f_0\cup f_1\in \val[s]$.

\noindent (ii)\quad The $s$ constructed as in (i) above for $s_0,s_1$ will
witness the \ttt--additivity as well. In an analogous way we show also the
multiadditivity. 

\noindent (iii)\quad Let $\bar{t}=\langle t_i:i<\omega\rangle\in
\PCt(K_N,\Sigma_N)$. Suppose that $n,n_1,n_2<\omega$ and $\post(\bar{t}\upl
n)=\bigcup\{\cF_\ell:\ell<n_1\}$. By the Hales--Jewett theorem (see
\cite{HJ63}) there is $k>n_2$ such that for any partition of ${}^kN$ into
$n_1$ parts there is a combinatorial line included in one of the parts. Then
we easily find $s\in\Sigma_N(t_0,\ldots,t_{k-1})$ such that $\val[s]
\subseteq \cF_\ell$ for some $\ell<n_1$. Necessarily, $\nor[s]\geq k-1\geq
n_2$. This proves the weak \ttt--bigness for $\bar{t}$. Similarly to (ii) we
may argue that $(K_N,\Sigma_N)$ is gluing on $\bar{t}$.  
\end{proof}

\section{Ultrafilters on loose possibilities}
Here we introduce ultrafilters on the (countable) set $\cF_\bH$ (see
\ref{context}) which contain sets large from the point of view of pure
candidates for a loose FFCC pair. Then we use them to derive a partition
theorem for this case.

\begin{definition}  
\label{g5}
Let $(K,\Sigma)$ be a loose FFCC pair for $\bH$. 
\begin{enumerate}
\item For a pure candidate $\bar{t}\in\PC(K,\Sigma)$, we  define 
\begin{itemize}
\item $\cA_{\bar{t}}^0=\{\pos(\bar{t}\upl n): n<\omega\}$, 
\item $\cA_{\bar{t}}^1$ is the collection of all sets $A\subseteq \cF_\bH$
  such that for some $N<\omega$ we have 
\[\big(\forall s\in\Sigma'(\bar{t})\big)\big(\nor[s]\ge N\ \&\ m^s_\dn\ge
N\quad\Rightarrow \quad \val[s]\cap A\neq \emptyset\big),\]
\item $\cA_{\bar{t}}^2$ is the collection of all sets $A\subseteq \cF_\bH$
such that for some $N<\omega$ we have 
\[(\forall \bar{t}_1\geq\bar{t})(\exists \bar{t}_2\geq \bar{t}_1)(\forall
s\in \Sigma'(\bar{t}_2))(\nor[s]\ge N \quad\Rightarrow \quad \val[s]\cap
A\neq \emptyset\big).\] 
\end{itemize}
\item For $\ell<3$ we let $\uf^\ell_{\bar{t}}(K,\Sigma)$ be the family of
  all ultrafilters $D$ on $\cF_\bH$ such that $\cA_{\bar{t}}^\ell\subseteq
  D$. We also set (for $\ell<3$) 
\[\uf_\ell(K,\Sigma)\stackrel{\rm def}{=}\bigcup\{\uf^\ell_{\bar{t}}(K,
\Sigma):\bar{t}\in \PC(K,\Sigma)\}.\]
\end{enumerate}
\end{definition}

\begin{proposition}
\label{g7d}
Let $(K,\Sigma)$ be a loose FFCC pair for $\bH$, $\bar{t}\in\PC(K,\Sigma)$.   
\begin{enumerate}
\item $\cA^0_{\bar{t}}\subseteq\cA^1_{\bar{t}}\subseteq \cA^2_{\bar{t}}$ and
  hence also $\uf^2_{\bar{t}}(K,\Sigma) \subseteq \uf^1_{\bar{t}}(K,\Sigma) 
  \subseteq \uf^0_{\bar{t}}(K,\Sigma)$.
\item $\uf^0_{\bar{t}}(K,\Sigma) \neq\emptyset$.
\item If $(K,\Sigma)$ has the weak \ttl--bigness for each
  $\bar{t}'\geq\bar{t}$, then $\uf^2_{\bar{{t}}}(K,\Sigma)$ is not empty. 
\item If $(K,\Sigma)$ has the weak \ttl--bigness for $\bar{t}$, then
  $\uf^1_{\bar{{t}}}(K,\Sigma)\neq \emptyset$.
\item Assume {\rm CH}. Suppose that $(K,\Sigma)$ is simple except omitting
  (see \ref{simple}(1)) and has the weak \ttl--bigness on every candidate 
  $\bar{t}\in \PC(K,\Sigma)$. Then there is $D\in\uf^2_{\bar{{t}}}
  (K,\Sigma)$ such that  
\[\big(\forall A\in D\big)\big(\exists \bar{t}\in\PC(K,\Sigma)\big)
\big(\pos(\bar{t})\in D\ \&\ \pos(\bar{t})\subseteq A\big).\] 
\end{enumerate}
\end{proposition}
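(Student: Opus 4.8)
The plan is to establish the five items in order, each leaning on the previous ones. I will use freely three routine facts about $\PC(K,\Sigma)$: that $\le$ is transitive; that $\bar t\upl n\ge\bar t$ (witnessed by the singletons $u_k=\{n+k\}$ and \ref{g1}(2)(e)); and that $\bar s\ge\bar t$ implies $\Sigma'(\bar s)\subseteq\Sigma'(\bar t)$, hence $\pos(\bar s)\subseteq\pos(\bar t)$ --- the last because, by \ref{g1}(2)(g), every tuple from $\cS(\bar s)$ has its $\Sigma$--image inside $\Sigma(\bar t\rest u)$ for a suitable finite $u\subseteq\omega$. For (1): $\cA^0_{\bar t}\subseteq\cA^1_{\bar t}$ since for $A=\pos(\bar t\upl n)$ the bound $N=m^{t_n}_\dn$ works --- any $s\in\Sigma'(\bar t)$ with $m^s_\dn\ge N$ actually lies in $\Sigma'(\bar t\upl n)$, the $m_\dn$'s of $\bar t$ being strictly increasing, so $\val[s]\subseteq A$; and $\cA^1_{\bar t}\subseteq\cA^2_{\bar t}$ with the same bound, for given $\bar t_1\ge\bar t$ one takes $\bar t_2=\bar t_1\upl p$ with $m^{(\bar t_1)_p}_\dn\ge N$, and then every $s\in\Sigma'(\bar t_2)\subseteq\Sigma'(\bar t)$ has $m^s_\dn\ge N$ automatically. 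The inclusions among the $\uf^\ell_{\bar t}$ are then immediate, and (2) holds because $\cA^0_{\bar t}$ is a $\subseteq$--decreasing chain of non-empty sets, hence has the finite intersection property and extends to an ultrafilter.

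For (3) and (4) the strategy is identical: show that $\cA^2_{\bar t}$, resp.\ $\cA^1_{\bar t}$, has the finite intersection property (both are obviously closed under supersets) and so extends to an ultrafilter in the relevant family. Fix finitely many members $B_0,\dots,B_{k-1}$ with bounds $N_0,\dots,N_{k-1}$, put $N=\max_iN_i$. For (4), apply the weak \ttl--bigness of $\bar t$ to the partition of $\pos(\bar t)$ into the pieces $\cF_\varepsilon=\{f\in\pos(\bar t):(\forall i<k)(f\in B_i\Leftrightarrow\varepsilon(i)=1)\}$, $\varepsilon\in{}^{k}2$; with $n_2=n_3=N$ this produces $s\in\Sigma'(\bar t)$ and $\varepsilon$ with $\nor[s]\ge N$, $m^s_\dn\ge N$, $\val[s]\subseteq\cF_\varepsilon$, so $\nor[s]\ge N_i$, $m^s_\dn\ge N_i$ force $\val[s]\cap B_i\ne\emptyset$ for each $i$, whence $\varepsilon\equiv1$ and $\emptyset\ne\val[s]\subseteq B_0\cap\dots\cap B_{k-1}$. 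For (3), first build $\bar t=\bar t^{(0)}\le\bar t^{(1)}\le\dots\le\bar t^{(k)}$, getting $\bar t^{(i)}$ by applying the $\cA^2$--property of $B_i$ to $\bar t_1=\bar t^{(i-1)}$; the resulting property persists for all $s\in\Sigma'(\bar t^{(k)})\subseteq\Sigma'(\bar t^{(i)})$, so every $s\in\Sigma'(\bar t^{(k)})$ with $\nor[s]\ge N_i$ meets $B_i$; now rerun the partition argument of (4) with $\bar t^{(k)}$ in place of $\bar t$, using the weak \ttl--bigness of $\bar t^{(k)}$ (available as $\bar t^{(k)}\ge\bar t$).

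For (5), under CH enumerate $\cP(\cF_\bH)=\{A_\alpha:\alpha<\omega_1\}$ and construct $\bar t_\alpha\in\PC(K,\Sigma)$, $\alpha<\omega_1$, with $\bar t_0=\bar t$, maintaining the invariant ``$(\forall\beta<\alpha)(\exists n<\omega)(\bar t_\alpha\upl n\ge\bar t_\beta)$''. At a successor $\alpha+1$: since $(K,\Sigma)$ is simple except omitting and has weak \ttl--bigness on $\bar t_\alpha$, apply Observation \ref{obsone}(3) to the partition $\pos(\bar t_\alpha)=(A_\alpha\cap\pos(\bar t_\alpha))\cup(\pos(\bar t_\alpha)\setminus A_\alpha)$ to get $\bar t_{\alpha+1}\ge\bar t_\alpha$ with $\pos(\bar t_{\alpha+1})\subseteq A_\alpha$ or $\pos(\bar t_{\alpha+1})\cap A_\alpha=\emptyset$; the invariant survives by composing ``$\bar t_\alpha\upl n\ge\bar t_\beta$'' with ``$\bar t_{\alpha+1}\ge\bar t_\alpha$''. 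At a limit $\delta$: pick $\alpha_m\nearrow\delta$ and set $(\bar t_\delta)_m=(\bar t_{\alpha_m})_{j_m}$ with $j_0<j_1<\dots$ chosen large enough that $m^{(\bar t_{\alpha_{m+1}})_{j_{m+1}}}_\dn\ge m^{(\bar t_{\alpha_m})_{j_m}}_\up$, $\nor[(\bar t_{\alpha_m})_{j_m}]\ge m$, and $j_m$ exceeds the thresholds given by the invariant for the pairs $\alpha_k<\alpha_m$ ($k<m$); then $\bar t_\delta\in\PC(K,\Sigma)$, and for each $k$ one gets $\bar t_\delta\upl k\ge\bar t_{\alpha_k}$ (the tail creatures $(\bar t_{\alpha_{k+i}})_{j_{k+i}}$ all lie in $\Sigma'(\bar t_{\alpha_k})$, and the witnessing index--sets come out consecutive because the $m_\dn$'s of $\bar t_{\alpha_k}$ strictly increase), so the invariant holds at $\delta$. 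Finally put $\cG=\{\pos(\bar t_\alpha\upl n):\alpha<\omega_1,\ n<\omega\}$ and $D=\{A\subseteq\cF_\bH:(\exists P\in\cG)(P\subseteq A)\}$. By the invariant $\cG$ is $\subseteq$--downward directed (given two members, pass to a sufficiently late tail of the one with the larger index), so $D$ is a proper filter; it is an ultrafilter because each $\pos(\bar t_{\alpha+1})\in\cG$ lies in $A_\alpha$ or in $\cF_\bH\setminus A_\alpha$; it contains $\cA^2_{\bar t}$ since if $A_\alpha\in\cA^2_{\bar t}$ with bound $N$ and $\pos(\bar t_{\alpha+1})\cap A_\alpha=\emptyset$, then applying the $\cA^2$--property to a tail $\bar t_{\alpha+1}\upl n\ge\bar t_0=\bar t$ yields $s$ with $\nor[s]\ge N$ and $\val[s]\subseteq\pos(\bar t_{\alpha+1}\upl n)\subseteq\pos(\bar t_{\alpha+1})$ meeting $A_\alpha$, a contradiction; and the stated refinement property holds because every $A\in D$ contains some $\pos(\bar t_\alpha\upl n)\in\cG\subseteq D$ with $\bar t_\alpha\upl n\in\PC(K,\Sigma)$.

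I expect the limit stage of (5) to be the main obstacle: an honest fusion $\bar t_\delta\ge\bar t_{\alpha_m}$ for all $m$ is in general impossible (the bottom creatures of the $\bar t_{\alpha_m}$ may have $m_\dn\to\infty$), so one must settle for $\bar t_\delta\upl k\ge\bar t_{\alpha_k}$ and accordingly organise the ultrafilter around the tail sets $\pos(\bar t_\alpha\upl n)$ rather than the $\pos(\bar t_\alpha)$ themselves; carrying this bookkeeping and the invariant correctly is where the real work lies. Everything else --- transitivity of the various quasi--orders, non-increase of $\pos$ under $\Sigma$--recombination and under passing to tails, and preservation of $\lim_n\nor[t_n]=\infty$ --- is routine.
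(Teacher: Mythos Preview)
Your proof is correct and follows the same approach as the paper: establishing the finite intersection property of $\cA^1_{\bar t}$ and $\cA^2_{\bar t}$ via the $2^k$--cell partition into the $\cF_\varepsilon$, preceding this in case (3) by the iteration $\bar t^{(0)}\le\dots\le\bar t^{(k)}$, and building the CH ultrafilter in (5) from a transfinite $\le$--cofinal sequence of candidates so that $\{\pos(\bar t_\alpha\upl n):\alpha<\omega_1,\ n<\omega\}$ generates $D$. The paper carries out (3) using Observation~\ref{obsone}(2) to obtain a whole $\bar s\ge\bar t'$ with $\val[s_n]\subseteq\cF_{\eta_0}$ rather than a single $s$ from weak $\ttl$--bigness, but this is an inessential difference. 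Where you genuinely go further is in (5): the paper merely states the two properties of $\langle\bar t_\alpha:\alpha<\omega_1\rangle$, refers to \cite[5.3.4]{RoSh:470} for the construction, and asserts without argument that the generated ultrafilter is ``the desired'' one; you supply both the limit-stage fusion (with the necessary bookkeeping that only $\bar t_\delta\upl k\ge\bar t_{\alpha_k}$ can be arranged) and the verification that $\cA^2_{\bar t}\subseteq D$. One trivial point: in (3) your index on $B_i$ is off by one (you build $\bar t^{(1)},\dots,\bar t^{(k)}$ from $B_0,\dots,B_{k-1}$), but this is purely notational.
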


\begin{proof}
(2)\quad Note that $\cA^0_{\bar{t}}$ has the finite intersection property
(fip).  
\medskip

\noindent (3)\quad It is enough to show that, assuming $(K,\Sigma)$ has the
weak \ttl--bigness for all $\bar{t}'\geq \bar{t}$, $\cA^2_{\bar{t}}$ has
fip. So suppose that for $\ell<k$ we are given a set
$A_\ell\in\cA^2_{\bar{t}}$ and let $N_\ell<\omega$ be such that 
\begin{enumerate}
\item[$(*)_\ell$] $(\forall \bar{t}_1\geq\bar{t})(\exists \bar{t}_2\geq
  \bar{t}_1)(\forall s\in \Sigma'(\bar{t}_2))(\nor[s]\ge N_\ell \
  \Rightarrow \ \val[s]\cap A_\ell\neq \emptyset\big)$.  
\end{enumerate}
Let $N=\max\{N_\ell:\ell<k\}$. Then we may choose $\bar{t}'\geq \bar{t}$
such that 
\begin{enumerate}
\item[$(*)$] $(\forall s\in \Sigma'(\bar{t}'))(\nor[s]\ge N\ \Rightarrow \
  (\forall \ell<k)(\val[s]\cap A_\ell\neq \emptyset)\big)$.  
\end{enumerate}
[Why? Just use repeatedly  $(*)_\ell$ for $\ell=0,1,\ldots,k-1$; remember 
$\bar{t}'\leq\bar{t}''$ implies $\Sigma'(\bar{t}'')\subseteq
\Sigma'(\bar{t}')$.]   

For $\eta\in {}^k2$ set 
\[\cF_\eta=\{f\in\pos(\bar{t}'):(\forall\ell<k)(\eta(\ell)=1\
\Leftrightarrow\ f\in A_\ell)\}.\] 
Then $\pos(\bar{t}')=\bigcup\{\cF_\eta:\eta\in {}^k 2\}$ and $(K,\Sigma)$
has the weak $\ttl$--bigness for $\bar{t}'$, so we may use Observation
\ref{obsone}(2) to pick $\eta_0\in {}^k 2$ and $\bar{s}\geq \bar{t}'$ such
that $\val[s_n]\subseteq\cF_{\eta_0}$ for all $n<\omega$. Consider
$n<\omega$ such that $\nor[s_n]>N$. It follows from $(*)$ that
$\val[s_n]\cap A_\ell\neq\emptyset$ for all $\ell<k$. Hence, by the choice
of $\bar{s}$, $\eta_0(\ell)=1$ for all $\ell<k$ and therefore
$\emptyset\neq\val[s_n]\subseteq \bigcap\limits_{\ell<k} A_\ell$.  \medskip

\noindent (4)\quad Similarly to (3) above one shows that $\cA^1_{\bar{t}}$
has fip.

\noindent (5)\quad Assuming CH  and using Observation \ref{obsone}(3) we may
construct a sequence $\langle \bar{t}_\alpha:\alpha<\omega_1\rangle\subseteq
\PC(K,\Sigma)$ such that 
\begin{itemize}
\item if $\alpha<\beta<\omega_1$ then $(\exists n<\omega)(\bar{t}_\alpha
  \leq (\bar{t}_\beta\upl n))$, 
\item if $A\subseteq \cF_\bH$ then for some $\alpha<\omega_1$ we have that 
  either $\pos(\bar{t}_\alpha)\subseteq A$ or $\pos(\bar{t}_\alpha)\cap
  A=\emptyset$. 
\end{itemize} 
(Compare to the proof of \cite[5.3.4]{RoSh:470}.) Then the family 
\[\{\pos(\bar{t}_\alpha\upl n):\alpha<\omega_1\ \&\ n<\omega\}\] 
generates the desired ultrafilter.
\end{proof}

\begin{observation}
\label{obs}
The sets $\uf^\ell_{\bar{t}}(K,\Sigma)$ (for $\ell<3$) are closed subsets of
the (Hausdorff compact topological space) $\beta_*(\cF_\bH)$ of
non-principal ultrafilters on $\cF_\bH$. Hence each
$\uf^\ell_{\bar{t}}(K,\Sigma)$ itself is a compact Hausdorff space. 
\end{observation}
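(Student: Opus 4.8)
The plan is to verify the claim directly from the definition of the Stone-Čech-type space $\beta_*(\cF_\bH)$ and the defining condition $\cA^\ell_{\bar t}\subseteq D$. Recall that for a countable set $X$, the space $\beta_*(X)$ of non-principal ultrafilters carries the topology generated by the clopen sets $\widehat{A}=\{D\in\beta_*(X):A\in D\}$ for $A\subseteq X$, and this space is compact Hausdorff. So it suffices to show that each $\uf^\ell_{\bar t}(K,\Sigma)$, as a subset of $\beta_*(\cF_\bH)$, is closed; compactness and Hausdorffness of the subspace then follow automatically since a closed subspace of a compact Hausdorff space is compact Hausdorff.

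\emph{First} I would observe that $\uf^\ell_{\bar t}(K,\Sigma)=\{D\in\beta_*(\cF_\bH):\cA^\ell_{\bar t}\subseteq D\}=\bigcap_{A\in\cA^\ell_{\bar t}}\widehat{A}$. Each $\widehat{A}$ is clopen, hence closed, so the intersection is closed. \emph{Next} one should check the implicit claim that each member of $\uf^\ell_{\bar t}(K,\Sigma)$ is indeed non-principal, i.e. that no principal ultrafilter $D=\{A:f\in A\}$ (for $f\in\cF_\bH$) can contain $\cA^\ell_{\bar t}$; since $\cA^0_{\bar t}\subseteq\cA^1_{\bar t}\subseteq\cA^2_{\bar t}$ by \ref{g7d}(1), it is enough to handle $\ell=0$. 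A principal ultrafilter at $f$ contains a set $A$ iff $f\in A$; but $\bigcap\cA^0_{\bar t}=\bigcap_n\pos(\bar t\upl n)=\emptyset$ because for each $f$ there is $n$ large enough that $\dom(f)\cap[m^{t_n}_\dn,\infty)=\emptyset$, so no $f$ lies in all the $\pos(\bar t\upl n)$. Hence $\uf^0_{\bar t}(K,\Sigma)$, and a fortiori each $\uf^\ell_{\bar t}(K,\Sigma)$, consists only of non-principal ultrafilters, so it really is a subset of $\beta_*(\cF_\bH)$ and the computation above applies. (Note this argument tacitly uses that $\bar t$ is a genuine pure candidate with $m^{t_n}_\dn\to\infty$, which follows from \ref{g1}(1)+\ref{g3}(1) together with $\nor[t_n]\to\infty$ and finiteness of $K_{\le m}$.)

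\emph{Finally}, to conclude ``each $\uf^\ell_{\bar t}(K,\Sigma)$ itself is a compact Hausdorff space'' I would just cite the standard fact that a closed subspace of a compact space is compact, and a subspace of a Hausdorff space is Hausdorff. There is essentially no obstacle here; the only point needing a line of care is the non-principality verification, since without it the assertion ``closed subset of $\beta_*(\cF_\bH)$'' would not even be well-posed — but as noted this reduces to the elementary observation that $\bigcap\cA^0_{\bar t}=\emptyset$.
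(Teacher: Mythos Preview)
Your proof is correct and is precisely the standard argument the paper has in mind; indeed, the paper states this as an observation without proof, and your write-up supplies exactly the expected details (expressing $\uf^\ell_{\bar t}(K,\Sigma)$ as $\bigcap_{A\in\cA^\ell_{\bar t}}\widehat{A}$, plus the easy check that $\bigcap_n\pos(\bar t\upl n)=\emptyset$ to exclude principal ultrafilters). Nothing further is needed.
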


\begin{definition}
\label{g6}
\begin{enumerate}
\item For $f\in\cF_\bH$ and $A\subseteq\cF_\bH$ we define 
\[f\oplus A\stackrel{\rm def}{=}\{g\in\cF_\bH:\max(\dom(f))<\min(\dom(g))\
\mbox{ and }\ f\cup g \in A\}.\] 
\item For $D_1,D_2\in\uf_0(K,\Sigma)$ we let 
\[D_1 \oplus D_2 \stackrel{\rm def}{=}\big\{A\subseteq \cF_\bH:\{f \in
\cF_\bH: (f \oplus A) \in D_1\} \in D_2\big\}.\] 
\end{enumerate}
\end{definition} 

\begin{proposition}
\label{obstwo}
\begin{enumerate}
\item If $A_1,A_2\subseteq\cF_\bH$ and $f\in\cF_\bH$, then
\[\begin{array}{l}
f\oplus(A_1\cap A_2)=(f\oplus A_1)\cap (f\oplus A_2)\quad\mbox{ and}\\
\{g\in\cF_\bH:\max(\dom(f))<\min(\dom(g))\}\setminus (f\oplus A_1)=
f\oplus (\cF_\bH\setminus A_1).
\end{array}\]
\item If $D_1,D_2,D_3\in\uf_0(K,\Sigma)$, then $D_1\oplus D_2$ is a
  non-principal ultrafilter on $\cF_\bH$ and $D_1\oplus(D_2\oplus D_3)=
  (D_1\oplus D_2)\oplus D_3$. 
\item The mapping $\oplus: \uf_0(K,\Sigma)\times \uf_0(K,\Sigma)
\longrightarrow \beta_*(\cF_\bH)$ is right continuous (i.e., for each $D_1
\in \uf_0(K,\Sigma)$ the function $\uf_0(K,\Sigma)\ni D_2 \mapsto D_1 \oplus
D_2 \in \beta_*(\cF_\bH)$ is continuous).
\end{enumerate}
\end{proposition}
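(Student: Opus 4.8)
The plan is to prove Proposition~\ref{obstwo} in three parts, mirroring the classical facts about the semigroup operation on $\beta\omega$ (as in \cite[Lemma 10.1]{Cmf77}) but checking at each stage that we stay inside the relevant spaces of ultrafilters.

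\medskip

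\noindent\emph{Part (1).} This is purely combinatorial book-keeping about the operation $\oplus$ on subsets. For the first identity, $g\in f\oplus(A_1\cap A_2)$ means $\max(\dom(f))<\min(\dom(g))$ and $f\cup g\in A_1\cap A_2$; since $f\cup g\in A_1\cap A_2$ iff $f\cup g\in A_1$ and $f\cup g\in A_2$, this is exactly $g\in(f\oplus A_1)\cap(f\oplus A_2)$. For the complement identity, fix the ``allowed domain'' set $B_f=\{g\in\cF_\bH:\max(\dom(f))<\min(\dom(g))\}$; then $f\oplus A\subseteq B_f$ always, and for $g\in B_f$ we have $g\notin f\oplus A_1$ iff $f\cup g\notin A_1$ iff $f\cup g\in\cF_\bH\setminus A_1$ iff $g\in f\oplus(\cF_\bH\setminus A_1)$. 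I expect this part to be routine.

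\medskip

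\noindent\emph{Part (2).} First I would show $D_1\oplus D_2$ is a non-principal ultrafilter. Using Part (1), the map $A\mapsto\{f:f\oplus A\in D_1\}$ respects finite intersections and, on the relevant domain sets, complements; the subtlety is that $f\oplus(\cF_\bH\setminus A)$ is only the complement of $f\oplus A$ \emph{within} $B_f$, so I need that for $D_1$-almost all $f$ the set $B_f$ itself lies in $D_1$. This holds because each $D_1\in\uf_0(K,\Sigma)$ contains $\cA^0_{\bar{t}}$ for some $\bar{t}$, hence contains tails $\pos(\bar{t}\upl n)$ whose elements have domains starting arbitrarily high, so for every $f$ the set $B_f\supseteq\pos(\bar{t}\upl n)$ for suitable $n$ and thus $B_f\in D_1$; in fact every $D_1\in\uf_0$ is non-principal and concentrates on functions with arbitrarily large $\min(\dom)$. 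With that, standard reasoning gives that $D_1\oplus D_2$ is an ultrafilter, and it is non-principal because for any single $g$, the set $\{f:f\oplus\{g\}\in D_1\}=\{f:\{g\}\in D_1\text{ and }\max\dom f<\min\dom g\}$ is empty (as $D_1$ is non-principal), so $\{g\}\notin D_1\oplus D_2$. For associativity I would compute, for $A\subseteq\cF_\bH$, the condition $A\in D_1\oplus(D_2\oplus D_3)$ and the condition $A\in(D_1\oplus D_2)\oplus D_3$ and unwind both through Definition~\ref{g6}(2); the key algebraic identity is that for $f,g\in\cF_\bH$ with $\max(\dom f)<\min(\dom g)$ one has $f\oplus(g\oplus A)=(f\cup g)\oplus A$ (both sides being $\{h:\max(\dom(f\cup g))<\min(\dom h),\ f\cup g\cup h\in A\}$), and then a Fubini-style rearrangement over the two ultrafilters $D_2,D_3$ finishes it. I would also need to know $D_1\oplus D_2\in\uf_0(K,\Sigma)$ for the associativity statement to even typecheck, but since Part (2) only asserts $D_1\oplus D_2$ is a non-principal ultrafilter on $\cF_\bH$ and associativity is an identity among ultrafilters on $\cF_\bH$, it suffices to treat all three as elements of $\beta_*(\cF_\bH)$; the $\uf_0$ hypothesis is used only to guarantee the $B_f\in D_1$ fact above.

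\medskip

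\noindent\emph{Part (3).} Right continuity: fix $D_1\in\uf_0(K,\Sigma)$ and a basic clopen set $\widehat A=\{E\in\beta_*(\cF_\bH):A\in E\}$; I must show the preimage $\{D_2:A\in D_1\oplus D_2\}$ is open in $\uf_0(K,\Sigma)$. But by definition $A\in D_1\oplus D_2$ iff $\{f:f\oplus A\in D_1\}\in D_2$, i.e.\ iff $D_2\in\widehat{C_A}$ where $C_A=\{f\in\cF_\bH:f\oplus A\in D_1\}$ is a fixed subset of $\cF_\bH$ (depending only on $D_1$ and $A$); hence the preimage is exactly $\widehat{C_A}\cap\uf_0(K,\Sigma)$, which is relatively clopen. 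So the map is continuous — indeed the preimage of a clopen set is clopen.

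\medskip

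\noindent The main obstacle I anticipate is the bookkeeping in Part (2): ensuring that the complement-within-$B_f$ phenomenon does not break the ``ultrafilter'' property, which is exactly where the hypothesis $D_1\in\uf_0(K,\Sigma)$ (forcing $D_1$ to be non-principal and to contain the tail sets $\pos(\bar{t}\upl n)$, hence all relevant $B_f$) gets used; everything else is a careful but standard translation of the $\beta\omega$-semigroup argument to the set $\cF_\bH$ with the ``disjoint-and-ordered-domain'' concatenation in place of addition.
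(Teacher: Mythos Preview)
Your approach is correct and is exactly what the paper has in mind: the paper's proof is just ``Straightforward, compare with \ref{correctop}'', and your outline is a careful unfolding of the same standard $\beta\omega$-style argument.

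One genuine slip to fix in Part~(2): the associativity identity as you wrote it is false. With $\max(\dom f)<\min(\dom g)$, the set $f\oplus(g\oplus A)$ is \emph{empty}, because any $h$ with $\max(\dom f)<\min(\dom h)$ has $\min(\dom(f\cup h))\le\min(\dom f)<\min(\dom g)\le\max(\dom g)$, so $f\cup h\notin g\oplus A$. The identity you actually need (and which your parenthetical description matches) is
\[
g\oplus(f\oplus A)=(f\cup g)\oplus A\qquad\text{when }\max(\dom f)<\min(\dom g),
\]
i.e.\ the roles of $f$ and $g$ in the nested $\oplus$ must be swapped. This is precisely the analogue of \ref{obsforassoc}(2), and with it the Fubini rearrangement you describe goes through verbatim. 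Your treatment of the $B_f$ issue (using $\cA^0_{\bar t}\subseteq D_1$ to get $B_f\in D_1$ for every $f$) and of right continuity is fine.
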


\begin{proof}
  Straightforward, compare with \ref{correctop}.
\end{proof}

\begin{proposition}
\label{g7} 
Assume that a loose FFCC pair $(K,\Sigma)$ has the weak \ttl--additivity
(see \ref{defadd}(1)) for a candidate $\bar{t}\in \PC(K,\Sigma)$. If
$D_1,D_2\in \uf^1_{\bar{t}}(K,\Sigma)$, then $D_1\oplus D_2\in
\uf^1_{\bar{t}}(K,\Sigma)$.     
\end{proposition}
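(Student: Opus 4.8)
The plan is to show that $D_1\oplus D_2$ contains $\cA^1_{\bar t}$, using the weak \ttl--additivity witness $\bfun$. Fix $A\in\cA^1_{\bar t}$ with witness $N$, i.e. $\val[s]\cap A\neq\emptyset$ for every $s\in\Sigma'(\bar t)$ with $\nor[s]\geq N$ and $m^s_\dn\geq N$. To conclude $A\in D_1\oplus D_2$ it suffices, by \ref{g6}(2), to show that $B:=\{f\in\cF_\bH:(f\oplus A)\in D_1\}$ belongs to $D_2$, and for that (since $D_2\supseteq\cA^1_{\bar t}$) it suffices to prove $B\in\cA^1_{\bar t}$. So I need a single $N'<\omega$ such that every $s_1\in\Sigma'(\bar t)$ with $\nor[s_1]\geq N'$ and $m^{s_1}_\dn\geq N'$ has $\val[s_1]\cap B\neq\emptyset$; in other words, some $f\in\val[s_1]$ with $(f\oplus A)\in D_1$.

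The key step is to choose $N'$ so that, given such an $s_1$ with $\nor[s_1]\geq N'$ and $m^{s_1}_\dn\geq N'$, I can certify $f\oplus A\in\cA^1_{\bar t}$ for \emph{some} $f\in\val[s_1]$ — in fact for every $f\in\val[s_1]$ — with a uniform constant depending only on $m^{s_1}_\up$. The natural candidate is $N':=\bfun(N)$ (so that automatically $\nor[s_1]\geq\bfun(N)$ and $m^{s_1}_\dn\geq\bfun(N)$, matching the hypotheses on $s_0$ in \ref{defadd}(1) with $m=N$), and then the witness for $f\oplus A\in\cA^1_{\bar t}$ should be $N'':=\bfun(m^{s_1}_\up)$. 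To verify this, take any $s_0\in\Sigma'(\bar t)$ with $\nor[s_0]\geq N''=\bfun(m^{s_1}_\up)$ and $m^{s_0}_\dn\geq\bfun(m^{s_1}_\up)$; in particular $m^{s_0}_\dn>\bfun(m^{s_1}_\up)$ after a harmless adjustment, so the pair $(s_1,s_0)$ satisfies the hypotheses of weak \ttl--additivity (with the roles: $s_1$ plays ``$s_0$'' and $s_0$ plays ``$s_1$'', and $m=N$). Hence there is $s\in\Sigma'(\bar t)$ with $m^s_\dn\geq N$, $\nor[s]\geq N$, and $\val[s]\subseteq\{f'\cup g:f'\in\val[s_1],\ g\in\val[s_0]\}$. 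By the choice of $N$, pick $h\in\val[s]\cap A$; write $h=f'\cup g$ with $f'\in\val[s_1]$, $g\in\val[s_0]$. Then $\max(\dom(f'))<\min(\dom(g))$ since $m^{s_1}_\up\leq m^{s_0}_\dn$, so $g\in f'\oplus A$, giving $\val[s_0]\cap(f'\oplus A)\neq\emptyset$.

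The remaining subtlety is that this argument produces, for the given $s_0$, \emph{some} $f'\in\val[s_1]$ with $\val[s_0]\cap(f'\oplus A)\neq\emptyset$, but $f'$ may depend on $s_0$, whereas I need one fixed $f\in\val[s_1]$ working for all such $s_0$. Since $\val[s_1]$ is finite, I would instead argue as follows: set $\cF_{f}:=\{g\in\cF_\bH:f\cup g\in A,\ \max\dom(f)<\min\dom(g)\}=f\oplus A$ for each of the finitely many $f\in\val[s_1]$; the above shows that for every $s_0\in\Sigma'(\bar t)$ with $\nor[s_0]\geq\bfun(m^{s_1}_\up)$ and $m^{s_0}_\dn\geq\bfun(m^{s_1}_\up)$ we have $\val[s_0]\cap\bigcup_{f\in\val[s_1]}(f\oplus A)\neq\emptyset$, i.e. $\bigcup_{f\in\val[s_1]}(f\oplus A)\in\cA^1_{\bar t}$. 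Since $\cA^1_{\bar t}\subseteq D_1$ and $D_1$ is an ultrafilter, some single $f\in\val[s_1]$ has $(f\oplus A)\in D_1$, i.e. $f\in B$. Thus $\val[s_1]\cap B\neq\emptyset$ whenever $\nor[s_1]\geq\bfun(N)$ and $m^{s_1}_\dn\geq\bfun(N)$, so $B\in\cA^1_{\bar t}\subseteq D_2$, and therefore $A\in D_1\oplus D_2$. This holds for all $A\in\cA^1_{\bar t}$, so $\cA^1_{\bar t}\subseteq D_1\oplus D_2$, i.e. $D_1\oplus D_2\in\uf^1_{\bar t}(K,\Sigma)$ (it is an ultrafilter by \ref{obstwo}(2)). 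The main obstacle, as indicated, is exactly this ``finite $\val[s_1]$'' uniformization, which is why the use of $D_1$ being an ultrafilter (rather than just a filter) is essential.
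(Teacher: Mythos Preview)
Your proof is correct and essentially identical to the paper's: both take $N'=\bfun(N)$ as the witness that $\{f:f\oplus A\in D_1\}\in\cA^1_{\bar t}$, show that for each admissible creature the union $\bigcup_{f}(f\oplus A)$ lies in $\cA^1_{\bar t}$ via weak \ttl--additivity, and then use finiteness of $\val[\cdot]$ together with $D_1$ being an ultrafilter to extract a single $f$. Your ``harmless adjustment'' for the strict inequality $m^{s_0}_\dn>\bfun(m^{s_1}_\up)$ is exactly what the paper handles by taking the witness $M=\bfun(m^{s_0}_\up)+m^{s_0}_\up+957$ instead of just $\bfun(m^{s_0}_\up)$; note also that your $s_0,s_1$ are the paper's $s_1,s_0$ respectively.
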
 

\begin{proof}
Let $\bfun:\omega \rightarrow \omega$ witness the weak \ttl--additivity of 
$(K,\Sigma)$ for $\bar{t}$, and let $D = D_1 \oplus D_2$, $D_1,D_2 \in  
\uf^1_{\bar{t}}(K,\Sigma)$. We already know that $D$ is an ultrafilter on
$\cF_\bH$ (by \ref{obstwo}(2)), so we only need to show that it includes 
$\cA^1_{\bar{t}}$. 

Suppose that $A\in \cA^1_{\bar{t}}$ and let $N<\omega$ be such that  
\begin{enumerate}
\item[$(*)_1$] $\big(\forall s\in\Sigma'(\bar{t})\big)\big(\nor[s]\ge N\ \&\
  m^s_\dn\ge N\quad\Rightarrow \quad \val[s]\cap A\neq \emptyset\big)$.
\end{enumerate}

\begin{claim}
\label{cl1}
For every $s\in\Sigma'(\bar{t})$, if $\nor[s]\ge \bfun(N)$ and $m^s_\dn\ge 
\bfun(N)$, then $\val[s]\cap \{f\in\cF_\bH:f\oplus A\in D_1\}\neq
\emptyset$.  
\end{claim}

\begin{proof}[Proof of the Claim]
Suppose $s_0\in\Sigma'(\bar{t})$, $\nor[s_0]\ge \bfun(N)$, $m^{s_0}_\dn\ge 
\bfun(N)$. Set    
\[B=\bigcup\{f\oplus A:f\in \val[s_0]\}.\]
We are going to argue that 
\begin{enumerate}
\item[$(*)_2$] $B\in\cA^1_{\bar{t}}$.
\end{enumerate}
So let $M=\bfun(m^{s_0}_\up)+m^{s_0}_\up+957$ and suppose
$s_1\in\Sigma(\bar{t})$ is such that $\nor[s_1]\geq M$ and $m^{s_1}_\dn\geq
M$. Apply the weak additivity and the choice of $M$ to find $s\in 
\Sigma'(\bar{t})$ such that    
\[m^s_\dn\geq N,\quad \nor[s]\geq N\quad \mbox{ and }\quad \val[s]\subseteq
\{f\cup g:f\in \val[s_0]\ \&\ g\in\val[s_1]\}.\]
Then, by $(*)_1$, $\val[s]\cap A\neq \emptyset$ so for some $f\in\val[s_0]$
and $g\in\val[s_1]$ we have $f\cup g\in A$ (and
$\max(\dom(f))<\min(\dom(g))$). Thus $g\in (f\oplus A)\cap
\val[s_1]\subseteq B\cap\val[s_1]$ and $(*)_2$ follows. 

Since $D_1\in\uf^1_{\bar{t}}(K,\Sigma)$ we conclude from $(*)_2$ that $B\in
D_1$ and hence (as $\val[s_0]$ is finite) $f\oplus A\in D_1$ for some
$f\in\val[s_0]$, as desired.
\end{proof} 

It follows from \ref{cl1} that $\{f\in\cF_\bH: f\oplus A\in
D_1\}\in\cA^1_{\bar{t}}$ and hence (as $D_2\in\uf^1_{\bar{t}}(K,\Sigma)$)
$\{f\in\cF_\bH: f\oplus A\in D_1\}\in D_2$. Consequently, $A\in D_1\oplus
D_2$.    
\end{proof}

\begin{lemma}
[See {\cite[Lemma 10.1, p.449]{Cmf77}}]
\label{z4}
If $X$ is a non-empty compact Hausdorff space, $\odot$ an associative binary 
operation which is continuous from the right (i.e. for each $p \in X$
the function $q \mapsto p \odot q$ is continuous), then there is a
$\odot$--idempotent point $p \in X$ (i.e. $p \odot p = p$).
\end{lemma}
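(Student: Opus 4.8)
The plan is to prove the Ellis–Namakura idempotent theorem by a Zorn's lemma argument on the poset of closed sub-semigroups. First I would let $\mathcal{S}$ denote the collection of all non-empty closed subsets $Y\subseteq X$ that are sub-semigroups, i.e. $Y\odot Y\subseteq Y$; this is non-empty since $X\in\mathcal{S}$. Order $\mathcal{S}$ by reverse inclusion. Any chain in $\mathcal{S}$ has, by compactness and the finite intersection property, a non-empty intersection, which is clearly closed and closed under $\odot$; hence every chain has an upper bound. By Zorn's lemma pick a minimal element $Y\in\mathcal{S}$ (minimal with respect to inclusion).

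Next I would fix any $p\in Y$ and consider $Y\odot p=\{q\odot p: q\in Y\}$. Using right continuity of $\odot$, the map $q\mapsto q\odot p$ is continuous on $X$, so $Y\odot p$ is the continuous image of the compact set $Y$, hence compact, hence closed; it is non-empty and, since $(Y\odot p)\odot(Y\odot p)\subseteq Y\odot Y\odot p\subseteq Y\odot p$ using associativity and $Y\odot Y\subseteq Y$, it is a sub-semigroup. Also $Y\odot p\subseteq Y$ because $p\in Y$ and $Y$ is a semigroup. So $Y\odot p\in\mathcal{S}$ and $Y\odot p\subseteq Y$; minimality of $Y$ forces $Y\odot p=Y$.

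Then I would look at $Z=\{q\in Y: q\odot p=p\}$, the preimage of the singleton $\{p\}$ under the continuous map $q\mapsto q\odot p$ restricted to $Y$. Since $Y\odot p=Y\ni p$, this set $Z$ is non-empty; it is closed as the preimage of a closed point under a continuous map from a Hausdorff space; and it is a sub-semigroup, since for $q_1,q_2\in Z$ we have $(q_1\odot q_2)\odot p=q_1\odot(q_2\odot p)=q_1\odot p=p$. Thus $Z\in\mathcal{S}$ and $Z\subseteq Y$, so again minimality gives $Z=Y$. In particular $p\in Z$, which means $p\odot p=p$, so $p$ is the desired idempotent.

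The one genuinely delicate point — and the only place any topology beyond "closed sets are compact" is used — is that $Z$ is closed: this needs $X$ Hausdorff so that $\{p\}$ is closed, and it needs the right-continuity hypothesis so that $q\mapsto q\odot p$ is continuous on $Y$. Everything else is a routine check that the various derived sets are non-empty and $\odot$-closed, and an application of Zorn's lemma whose hypothesis is verified via compactness. I would present the three reductions ($Y\odot p=Y$, then $Z=Y$, then $p\odot p=p$) in exactly that order, each time invoking minimality of $Y$.
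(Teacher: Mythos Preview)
The paper does not actually supply a proof of this lemma; it merely cites Comfort \cite[Lemma 10.1]{Cmf77} and uses the result as a black box. So there is no paper proof to compare against, and your write-up of the standard Ellis--Numakura argument is a welcome addition rather than a duplication.

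However, there is a genuine slip in your argument: you have the sides reversed. In this paper ``right continuous'' is defined as continuity of $q\mapsto p\odot q$ for each fixed $p$ (see the statement itself and Proposition~\ref{obstwo}(3)), i.e.\ continuity in the \emph{right} argument. Your proof instead invokes continuity of $q\mapsto q\odot p$, which is continuity in the left argument and is \emph{not} assumed. Consequently the claims that $Y\odot p$ is compact and that $Z=\{q\in Y:q\odot p=p\}$ is closed are not justified by the hypothesis as stated.

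The fix is immediate and purely cosmetic: run the same three-step reduction on the other side. Take $p\odot Y$ instead of $Y\odot p$; it is the image of $Y$ under the continuous map $q\mapsto p\odot q$, hence compact and closed, and it is a sub-semigroup contained in $Y$, so minimality gives $p\odot Y=Y$. Then set $Z=\{q\in Y:p\odot q=p\}$; this is the preimage of $\{p\}$ under that same continuous map, hence closed, non-empty since $p\in Y=p\odot Y$, and a sub-semigroup since $p\odot(q_1\odot q_2)=(p\odot q_1)\odot q_2=p\odot q_2=p$. Minimality gives $Z=Y$, so $p\odot p=p$. With this correction your proof is the standard one and entirely correct.
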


\begin{corollary}
\label{g8}
Assume that a loose FFCC pair $(K,\Sigma)$ has weak \ttl--additivity and
the weak \ttl--bigness for a candidate $\bar{t} \in \PC(K,\Sigma)$. Then 
\begin{enumerate}
\item $\uf^1_{\bar{t}}(K,\Sigma)$ is a non-empty compact Hausdorff space and
  $\oplus$ is an associative right continuous operation on it.
\item There is $D \in \uf^1_{\bar{t}}(K,\Sigma)$ such that $D = D \oplus D$.
\end{enumerate}
\end{corollary}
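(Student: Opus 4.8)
The plan is to deduce Corollary~\ref{g8} by fitting together the pieces already assembled in this section, using Lemma~\ref{z4} as the engine. First I would dispose of part~(1). The non-emptiness of $\uf^1_{\bar{t}}(K,\Sigma)$ is exactly Proposition~\ref{g7d}(4), which applies because we are assuming $(K,\Sigma)$ has the weak \ttl--bigness for $\bar{t}$. That $\uf^1_{\bar{t}}(K,\Sigma)$ is a compact Hausdorff space is Observation~\ref{obs} (it is a closed subspace of the compact Hausdorff space $\beta_*(\cF_\bH)$). Associativity of $\oplus$ is Proposition~\ref{obstwo}(2), and right continuity of $\oplus$ on $\uf_0(K,\Sigma)$ is Proposition~\ref{obstwo}(3); since $\uf^1_{\bar{t}}(K,\Sigma)\subseteq \uf^0_{\bar{t}}(K,\Sigma)\subseteq \uf_0(K,\Sigma)$ by Proposition~\ref{g7d}(1) and the definition of $\uf_0$, right continuity restricts to the subspace.

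The one genuine point to check for part~(1) is that $\oplus$ actually maps $\uf^1_{\bar{t}}(K,\Sigma)\times\uf^1_{\bar{t}}(K,\Sigma)$ into $\uf^1_{\bar{t}}(K,\Sigma)$ — i.e. that the operation is well-defined as a binary operation \emph{on this particular space} rather than merely a map into $\beta_*(\cF_\bH)$. This is precisely Proposition~\ref{g7}, which requires the weak \ttl--additivity hypothesis that we have in hand. So part~(1) follows by simply citing \ref{g7d}(4), \ref{obs}, \ref{obstwo}(2), \ref{obstwo}(3) and \ref{g7}.

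For part~(2), once part~(1) is established we have a non-empty compact Hausdorff space $X=\uf^1_{\bar{t}}(K,\Sigma)$ carrying an associative, right continuous operation $\odot=\oplus$. Lemma~\ref{z4} then immediately yields an idempotent point $D\in X$, i.e. $D=D\oplus D$ with $D\in\uf^1_{\bar{t}}(K,\Sigma)$, which is the assertion.

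I do not expect a real obstacle here: the corollary is essentially the assembly statement that says ``all the hypotheses of Lemma~\ref{z4} hold in the space $\uf^1_{\bar{t}}(K,\Sigma)$ under these assumptions on $(K,\Sigma)$.'' If there is any subtlety to be careful about in the write-up, it is the closure property — making explicit that Proposition~\ref{g7} is what guarantees $\oplus$ does not escape the subspace, so that Lemma~\ref{z4} can be applied to $X$ itself rather than to $\beta_*(\cF_\bH)$ (where idempotents might fail to lie in $X$). Everything else is a citation.
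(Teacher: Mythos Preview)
Your proposal is correct and follows essentially the same assembly-of-citations route as the paper's own proof: the paper cites \ref{g7d}, \ref{obs}, \ref{obstwo}(2,3) and \ref{g7} for part~(1), and then invokes Lemma~\ref{z4} for part~(2). Your citation of \ref{g7d}(4) for non-emptiness is in fact more accurate than the paper's reference to \ref{g7d}(3), since the corollary's hypothesis is weak \ttl--bigness for $\bar{t}$ alone (matching part~(4)), not for every $\bar{t}'\geq\bar{t}$.
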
 

\begin{proof}
(1)\quad  By \ref{g7d}(3), \ref{obs}, \ref{obstwo}(2,3) and \ref{g7}. 
\medskip

(2)\quad It follows from (1) above that all the assumptions of Lemma
\ref{z4} are satisfied for $\oplus$ and $\uf^1_{\bar{t}}(K,\Sigma)$, hence
its conclusion holds.  
\end{proof}

\begin{theorem}
\label{g9}
Assume that $(K,\Sigma)$ is a loose FFCC pair, $\bar{t}\in 
\PC(K,\Sigma)$. Let an ultrafilter $D\in\uf^1_{\bar{t}}(K,\Sigma)$ be such
that $D \oplus D = D$. Then  
\[\big(\forall A\in D\big)\big(\exists \bar{s}\geq \bar{t}\big)
\big(\pos(\bar{s})\subseteq A\big).\]
\end{theorem}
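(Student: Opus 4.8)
The statement is the creature-forcing analogue of Galvin's fixing trick / the standard derivation of Hindman's theorem from an idempotent ultrafilter. Given $A\in D$ with $D\oplus D=D$, the plan is to build the candidate $\bar s=\langle s_n:n<\omega\rangle$ recursively, maintaining at each stage a ``good'' set $A_n\in D$ with $A_n\subseteq A$, together with the property that $\val[s_n]\subseteq A$ and that every $f\in\val[s_n]$ satisfies $f\oplus A_{n+1}\in D$ (so that values of later creatures can be glued onto values of earlier ones and stay inside $A$). The key structural fact, coming from $D=D\oplus D$, is:
\begin{equation}
\label{eqplan}
A\in D\ \Longrightarrow\ A^\star:=\{f\in\cF_\bH: f\in A\ \&\ f\oplus A\in D\}\in D,
\end{equation}
which is immediate by unwinding the definition of $\oplus$ in \ref{g6}: $A\in D=D\oplus D$ says $\{f: f\oplus A\in D_1\}=\{f:f\oplus A\in D\}\in D$, and intersecting with $A\in D$ gives \eqref{eqplan}.

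\textbf{Main construction.} I would set $A_0=A$ and, given $A_n\in D$ with $A_n\subseteq A$, apply \eqref{eqplan} to get $A_n^\star\in D$. Since $D\in\uf^1_{\bar t}(K,\Sigma)\subseteq\uf^0_{\bar t}(K,\Sigma)$ (by \ref{g7d}(1)), $D$ contains $\cA^0_{\bar t}$, so $\pos(\bar t\upl k)\in D$ for every $k$; pick $k_n$ larger than all of $m^{t}_\up$ used so far (to guarantee the $\max/\min$ of domains line up) and note $A_n^\star\cap\pos(\bar t\upl{k_n})\in D$, hence is non-empty. Using that $(K,\Sigma)$ has weak $\ttl$-bigness for $\bar t$ (part of the hypothesis via \ref{g8}, but in fact we only need $D\in\uf^1_{\bar t}$): the set $A_n^\star$ is in $D\supseteq\cA^1_{\bar t}$, so it is ``large'' and via \ref{obsone}(2) applied to the two-piece partition $\pos(\bar t')=A_n^\star\cup(\cF_\bH\setminus A_n^\star)$ — after first passing to $\bar t'\ge\bar t\upl{k_n}$ — one finds a creature $s_n\in\Sigma'(\bar t)$ with $\nor[s_n]\ge n$, $m^{s_n}_\dn\ge k_n$-large, and $\val[s_n]\subseteq A_n^\star$. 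Then set $A_{n+1}=\bigcap\{f\oplus A_n: f\in\val[s_n]\}$; this is in $D$ because $\val[s_n]$ is finite and each $f\in\val[s_n]\subseteq A_n^\star$ has $f\oplus A_n\in D$ (using $f\oplus A_n\supseteq f\oplus A_n^\star$ won't be needed; rather keep the invariant $A_n^\star$ built relative to $A_n$, so $f\oplus A_n\in D$ holds for $f\in A_n^\star$). Also $A_{n+1}\subseteq A_n\subseteq A$, maintaining the invariant. Shrinking $A_{n+1}$ further by intersecting with $\pos(\bar t\upl{k})$ for suitable $k$ keeps it in $D$ and keeps the candidate growing.

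\textbf{Verification and the obstacle.} Once $\bar s=\langle s_n\rangle$ is built: $\lim\nor[s_n]=\infty$ since $\nor[s_n]\ge n$, and the $m^{s_n}_\dn,m^{s_n}_\up$ are increasing by construction, with each $s_n\in\Sigma'(\bar t)$ obtained from disjoint finite subsets of $\bar t$ arranged in increasing order, so $\bar s\in\PC(K,\Sigma)$ and $\bar s\ge\bar t$. To see $\pos(\bar s)\subseteq A$: any $h\in\pos(\bar s)$ comes from some $s\in\Sigma(s_{i_0},\dots,s_{i_m})$ with $h\rest[m^{s_{i_j}}_\dn,m^{s_{i_j}}_\up)\in\val[s_{i_j}]\cup\{\emptyset\}$ (by \ref{g1}(2)(f)), i.e. $h=g_{i_0}\cup\cdots\cup g_{i_m}$ with $g_{i_j}\in\val[s_{i_j}]$ (or empty). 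One then argues by downward induction on $j$ using $g_{i_j}\in\val[s_{i_j}]\subseteq A_{i_j}^\star$ and the definition of $A_{i_j+1}$ that $g_{i_j}\cup g_{i_{j+1}}\cup\cdots\cup g_{i_m}\in A_{i_j}\subseteq A$; the base case is $g_{i_m}\in\val[s_{i_m}]\subseteq A_{i_m}^\star\subseteq A_{i_m}$, and the inductive step uses that $g_{i_j}\oplus A_{i_j}\supseteq A_{i_j+1}$ contains (after the earlier shrinking along $\pos$) the tail union, together with $m$-th domain-ordering. The main obstacle is precisely this bookkeeping: ensuring the set $A_{n+1}$ over which $s_{n+1}$ is chosen is simultaneously (i) a member of $D$, (ii) contained in every $f\oplus A_n$ for $f\in\val[s_n]$, and (iii) contained in $\pos(\bar t\upl k)$ for a $k$ forcing $\min(\dom)$ of later values to exceed $\max(\dom)$ of all earlier values — so that $f\cup g$ really lands in $A_n$ via the definition of $\oplus$, which has the $\max(\dom(f))<\min(\dom(g))$ clause built in. Getting the interplay between the $\oplus$-definition's domain constraint and the creature domains $[m^{s_n}_\dn,m^{s_n}_\up)$ exactly right is the only delicate point; everything else is the routine idempotent-ultrafilter induction.
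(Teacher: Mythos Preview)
Your overall plan is the paper's: form $A_n^\star=A_n\cap\{f:f\oplus A_n\in D\}\in D$, find $s_n\in\Sigma'(\bar t)$ with large norm and $\val[s_n]\subseteq A_n^\star$, set $A_{n+1}=A_n\cap\{g:m^{s_n}_\up\le\min(\dom(g))\}\cap\bigcap_{f\in\val[s_n]}(f\oplus A_n)$, and then verify by induction (your downward induction is exactly the paper's Claim) that unions $g_{i_0}\cup\cdots\cup g_{i_m}$ land in $A_{i_0}\subseteq A$.

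The one step that is not properly justified is how you obtain $s_n$. Weak $\ttl$--bigness is \emph{not} a hypothesis of Theorem~\ref{g9}; it appears in Corollary~\ref{g8}, which is a separate statement that produces the idempotent and is not an ingredient of the present proof. And even granting bigness, Observation~\ref{obsone}(2) applied to the two--piece partition $A_n^\star\cup(\cF_\bH\setminus A_n^\star)$ only tells you that \emph{some} piece contains all $\val[s_n]$ for a suitable $\bar s$; you give no reason why that piece must be $A_n^\star$ rather than its complement. The correct argument --- which your parenthetical ``but in fact we only need $D\in\uf^1_{\bar t}$'' is gesturing at but never carries out --- is direct: since $A_n^\star\in D$ and $\cA^1_{\bar t}\subseteq D$, the complement $\cF_\bH\setminus A_n^\star$ is not in $\cA^1_{\bar t}$; unwinding Definition~\ref{g5}(1) then gives, for every $N$, an $s\in\Sigma'(\bar t)$ with $\nor[s]\ge N$, $m^s_\dn\ge N$, and $\val[s]\cap(\cF_\bH\setminus A_n^\star)=\emptyset$, i.e.\ $\val[s]\subseteq A_n^\star$. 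This is the paper's key Claim, and is the only place where the hypothesis $D\in\uf^1_{\bar t}(K,\Sigma)$ is actually used. Once this step is replaced, the rest of your argument matches the paper's.
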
 

\begin{proof}
The main ingredient of our argument is given by the following claim. 

\begin{claim}
\label{cl2}
Let $(K,\Sigma)$, $\bar{t}$ and $D$ be as in the assumptions of
\ref{g9}. Assume $A\in D$ and $n<\omega$. Then there is
$s\in\Sigma'(\bar{t})$ such that 
\begin{enumerate}
\item[$(\bullet)_1$]   $\val[s]\subseteq A$, $\nor[s]\geq n$, $m^s_\dn\geq
  n$, and 
\item[$(\bullet)_2$]   $(\forall f\in\val[s])(f\oplus A\in D)$. 
\end{enumerate}
\end{claim}

\begin{proof}[Proof of the Claim]
Let $A' := \{f \in\cF_\bH:f \oplus A\in D\}$ and $A'':= A\cap A'$. Since $A
\in D= D\oplus D$ we know that $A'\in D$ and thus $A''\in D$. Hence
$\cF_\bH\setminus A''\notin \cA^1_{\bar{t}}$ (remember $D\in
\uf^1_{\bar{t}}(K,\Sigma)$). Therefore, there is $s\in \Sigma'(\bar{t})$
such that   
\[\nor[s]\geq n,\quad m^s_\dn\geq n,\quad \mbox{ and }\quad \val[s]\cap
(\cF_\bH\setminus A'')=\emptyset.\] 
Then $\val[s]\subseteq A$ and for each $f\in\val[s]$ we have $f\oplus
A\in D$, as desired. 
\end{proof}

Now suppose $A\in D$. By induction on $n$ we choose $s_n,A_n$ so that  
\begin{enumerate}
\item[(a)] $A_0=A$, $A_n\in D$ and $A_{n+1}\subseteq A_n$, 
\item[(b)] $s_n\in \Sigma'(\bar{t})$, $\nor(s_n) \ge n$ and $m^{s_n}_\up\leq
  m^{s_{n+1}}_\dn$, 
\item[(c)] $\val[s_n]\subseteq A_n$,
\item[(d)] if $f\in A_{n+1}$, then $m^{s_n}_\up\leq \min(\dom(f))$,
\item[(e)] if $f \in \val[s_n]$, then $A_{n+1}\subseteq f\oplus A_n$.  
\end{enumerate}
Suppose we have constructed $s_0,\ldots,s_{n-1}$ and $A_n$ so that
demands (a)--(e) are satisfied. Set $N=m^{s_{n-1}}_\up+n+1$ (if $n=0$
stipulate $m^{s_{n-1}}_\up=0$) and use Claim \ref{cl2} to find $s_n\in 
\Sigma'(\bar{t})$ such that  
\begin{enumerate}
\item[$(\bullet)_1^n$] $\val[s_n]\subseteq A_n$, $\nor[s_n]\geq N$,
  $m^{s_n}_\dn\geq N$, and  
\item[$(\bullet)_2^n$] $(\forall f\in\val[s_n])(f\oplus A_n \in D)$.  
\end{enumerate}
Put 
\[A_{n+1}:=A_n\cap \{g\in\cF_\bH: m^{s_n}_\up<\min(\dom(g))\}\cap
\bigcap_{f\in\val[s_n]} f\oplus A_n.\]  
Since $\cA^0_{\bar{t}}\subseteq D$ we know that
$\{g\in\cF_\bH:m^{s_n}_\up<\min(\dom(g))\}\in D$ and since $\val[s_n]$ is
finite (and by $(\bullet)_2^n$) also $\bigcap\limits_{f\in\val[s_n]}f\oplus
A_n\in D$. Thus $A_{n+1}\in D$. Plainly the other requirements hold too.

After the above construction is carried out we set $\bar{s}= \langle
s_n:n<\omega\rangle$. Clearly $\bar{s}\in\PC(K,\Sigma)$ and
$\bar{s}\geq\bar{t}$ (remember clause (b)). 

\begin{claim}
\label{cl3}
If $n_0<\ldots<n_k<\omega$ and $f_\ell\in\val[s_{n_\ell}]$ for $\ell\le k$,
then $\bigcup\limits_{\ell\leq k} f_\ell\in A_{n_0}$.
\end{claim}

\begin{proof}[Proof of the Claim]
Induction on $k$. If $k=0$ then clause (c) of the choice of $s_{n_0}$ gives
the conclusion. For the inductive step suppose the claim holds true for $k$
and let $n_0<n_1<\ldots<n_k<n_{k+1}$, $f_\ell\in\val[s_{n_\ell}]$ (for
$\ell\leq k+1$). Letting $g=f_1\cup\ldots\cup f_{k+1}$ we may use the
inductive hypothesis to conclude that $g\in A_{n_1}$. By (a)+(e) we know
that $A_{n_1}\subseteq A_{n_0+1}\subseteq f_0\oplus A_{n_0}$, so $g\in
f_0 \oplus A_{n_0}$. Hence $f_0\cup g=f_0\cup f_1\cup\ldots\cup f_{k+1}\in
A_{n_0}$. 
\end{proof}
It follows from \ref{cl3} that $\pos(\bar{s})\subseteq A$ (remember (a)
above and \ref{g1}(2)(f)).  
\end{proof} 

\begin{conclusion}
\label{g11}
Suppose that  $(K,\Sigma)$ is a loose FFCC pair with weak \ttl--bigness and
weak \ttl--additivity over $\bar{t} \in \PC(K,\Sigma)$. Assume also that
$\pos(\bar{t})$ is the finite union $\cF_0 \cup \ldots \cup\cF_n$.  Then for
some $i\le n$ and $\bar{s} \in \PC(K,\Sigma)$ we have 
\[\pos(\bar{s}) \subseteq \cF_i\quad\mbox{ and }\quad\bar{t}\le\bar{s}.\]  
\end{conclusion} 

\begin{proof}
By \ref{g8} there is $D\in\uf^1_{\bar{t}}(K,\Sigma)$ such that $D = D \oplus
D$.  Clearly for some $i \le n$ we have $\cF_i \in D$. By \ref{g9} there
is $\bar{s} \in \PC(K,\Sigma)$ such that $\bar{t} \le\bar{s}$ and
$\pos(\bar{s}) \subseteq \cF_i$. 
\end{proof}

\section{Ultrafilters on tight possibilities}
In this section we carry out for tight FFCC pairs considerations parallel to
that from the case of loose FFCC pairs. The main difference now is that we
use {\em sequences\/} of ultrafilters, but many arguments do not change
much. 

\begin{definition}
\label{sufs}
Let $(K,\Sigma)$ be a tight FFCC pair for $\bH$, $\bar{t}=\langle
t_n:n<\omega\rangle\in\PCt(K,\Sigma)$.  
\begin{enumerate}
\item For $f\in\post(\bar{t}\upl n)$ let $x_f=x_f^{\bar{t}}$ be
  the unique $m>n$ such that $f\in\val[s]$ for some $s\in\Sigma(
  t_n,\ldots,t_{m-1})$. (Note \ref{g1}(4)(f$^{\rm tight}$).)  
\item If $f\in\post(\bar{t}\upl n)$, $n<\omega$, $A\subseteq
  \cF_\bH$, then we set 
\[f\circledast A=f\circledast_{\bar{t}} A=\big\{g\in \post(\bar{t}
\upl x_f):f\cup g\in A\big\}.\]
\item We let $\suft$ be the set of all sequences $\bar{D}=\langle
  D_n:n<\omega\rangle$ such that each $D_n$ is a non-principal ultrafilter
  on $\post(\bar{t}\upl n)$.
\item The space $\suft$ is equipped with the (Tichonov) product topology of
  $\prod\limits_{n<\omega}\beta_*\big(\post(\bar{t}\upl
  n)\big)$. For a sequence $\bar{A}=\langle A_0,\ldots,A_n\rangle$ such that
  $A_\ell\subseteq \post(\bar{t}\upl \ell)$ (for $\ell\leq n$) we set 
\[{\rm Nb}_{\bar{A}}=\big\{\bar{D}\in\suft:(\forall \ell\leq n)( A_\ell\in
D_\ell)\big\}.\] 
\item For $\bar{D}=\langle D_n:n<\omega\rangle\in\suft$, $n<\omega$ and
  $A\subseteq\post(\bar{t}\upl n)$ we let
\[\set^n_{\bar{t}}(A,\bar{D})=\big\{f\in\post(\bar{t}\upl n):
f\circledast A\in D_{x_f}\big\}.\]
\item For $\bar{D}^1,\bar{D}^2\in\suft$ we define $\bar{D}^1\circledast
  \bar{D}^2$ to be a sequence $\langle D_n:n<\omega\rangle$ such that for
  each $n$ 
\[D_n=\big\{A\subseteq\post(\bar{t}\upl n): \set^n_{\bar{t}}
(A,\bar{D}^1)\in D^2_n\big\}.\]
\end{enumerate}
\end{definition}

\begin{observation}
\label{obsforassoc}
Let $(K,\Sigma)$ be a tight FFCC pair for $\bH$ and $\bar{t}\in\PCt
(K,\Sigma)$. Suppose $f\in\post(\bar{t}\upl n)$, $g\in\post(
\bar{t}\upl x_f)$. Then
\begin{enumerate}
\item $f\cup g\in\post(\bar{t}\upl n)$ (note \ref{g1}(4)(h$^{\rm
    tight}$)) and
\item $(f\cup g)\circledast A=g\circledast (f\circledast A)$ for all
  $A\subseteq \cF_\bH$.   
\item $\suft$ is a compact Hausdorff topological space. The sets ${\rm
    Nb}_{\bar{A}}$ for $\bar{A}=\langle A_0,\ldots,A_n\rangle$, $A_\ell
  \subseteq \post(\bar{t}\upl \ell)$, $\ell\leq n<\omega$, form a
  basis of the topology of $\suft$.
\end{enumerate}   
\end{observation}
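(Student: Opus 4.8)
The plan is to establish the three items in turn: (1) is the combinatorial heart, (2) is a short definition chase that uses (1), and (3) is a standard topological remark. I do not anticipate a genuine obstacle; the only point demanding care is the bookkeeping with the shift operator $\upl$ and the index $x_f$, which is always measured relative to $\bar{t}$ itself (not to the shifted candidate), so that the tuples fed into \ref{g1}(2)(g) are genuinely consecutive and their concatenation really lies in $\dom(\Sigma)$ --- and that is exactly where the tightness of $\bar{t}$ enters.

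\emph{For (1).} First I would unwind \ref{sufs}(1): since $f\in\post(\bar{t}\upl n)$ there is $s_0\in\Sigma(t_n,\ldots,t_{x_f-1})$ with $f\in\val[s_0]$, and since $g\in\post(\bar{t}\upl x_f)$ there is $s_1\in\Sigma(t_{x_f},\ldots,t_{x_g-1})$ with $g\in\val[s_1]$. By \ref{g1}(2)(d), $m^{s_0}_\up=m^{t_{x_f-1}}_\up$ and $m^{s_1}_\dn=m^{t_{x_f}}_\dn$, and tightness of the candidate $\bar{t}$ gives $m^{t_{x_f-1}}_\up=m^{t_{x_f}}_\dn$, so $m^{s_0}_\up=m^{s_1}_\dn$ (in particular $\dom(f)$ and $\dom(g)$ are disjoint, so $f\cup g\in\cF_\bH$). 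Now \ref{g1}(4)(h$^{\rm tight}$), applied to $s_0,s_1,f,g$, yields $s\in\Sigma(s_0,s_1)$ with $f\cup g\in\val[s]$. Finally I would invoke \ref{g1}(2)(g) with the consecutive tuples $(t_n,\ldots,t_{x_f-1})$ and $(t_{x_f},\ldots,t_{x_g-1})$, whose concatenation $(t_n,\ldots,t_{x_g-1})$ lies in $\dom(\Sigma)$ by tightness, to get $\Sigma(s_0,s_1)\subseteq\Sigma(t_n,\ldots,t_{x_g-1})\subseteq\Sigt(\bar{t}\upl n)$; hence $f\cup g\in\val[s]\subseteq\post(\bar{t}\upl n)$. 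The same computation, with the uniqueness clause of \ref{sufs}(1), also shows $x_{f\cup g}=x_g$, which is the fact really needed for (2).

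\emph{For (2).} With $x_{f\cup g}=x_g$ in hand this is a direct computation from \ref{sufs}(2): on one side $(f\cup g)\circledast A=\{h\in\post(\bar{t}\upl x_g):(f\cup g)\cup h\in A\}$, and on the other $g\circledast(f\circledast A)=\{h\in\post(\bar{t}\upl x_g):g\cup h\in f\circledast A\}$, where $g\cup h\in f\circledast A$ means precisely $g\cup h\in\post(\bar{t}\upl x_f)$ together with $f\cup(g\cup h)\in A$. By (1) applied with $x_f$ in place of $n$, the membership $g\cup h\in\post(\bar{t}\upl x_f)$ is automatic once $g\in\post(\bar{t}\upl x_f)$ and $h\in\post(\bar{t}\upl x_g)$; so the two displayed sets coincide by associativity of $\cup$.

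\emph{For (3).} Each $\post(\bar{t}\upl n)$ is a countable set, so $\beta_*(\post(\bar{t}\upl n))$ --- the space of non-principal ultrafilters on it --- is compact Hausdorff, being a closed subspace of the Stone space of $\cP(\post(\bar{t}\upl n))$; hence $\suft$, their Tichonov product, is compact Hausdorff by Tychonoff's theorem. For the basis claim, recall that the clopen sets $\{D:A\in D\}$, $A\subseteq\post(\bar{t}\upl n)$, form a basis of $\beta_*(\post(\bar{t}\upl n))$, so a subbasic open subset of the product is $\pi_\ell^{-1}\big(\{D:A\in D\}\big)={\rm Nb}_{\langle\post(\bar{t}\upl 0),\ldots,\post(\bar{t}\upl(\ell-1)),A\rangle}$, and the family of the ${\rm Nb}_{\bar{A}}$ is closed under finite intersections (pad the shorter tuple with the full sets $\post(\bar{t}\upl\ell)$ and intersect coordinatewise) and covers $\suft$; therefore it is a basis.
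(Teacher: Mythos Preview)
Your argument is correct in all three parts. The paper itself offers no proof for this observation---it is stated and immediately followed by the next proposition---so there is nothing to compare against beyond noting that you have carefully supplied the details the authors left to the reader, including the key identity $x_{f\cup g}=x_g$ needed for (2) and the verification that $(s_0,s_1)\in\dom(\Sigma)$ via tightness so that \ref{g1}(2)(g) applies.
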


\begin{proposition}
\label{correctop}
Let $(K,\Sigma)$ be a tight FFCC pair for $\bH$ and $\bar{t}\in\PCt
(K,\Sigma)$.
\begin{enumerate}
\item If $\bar{D}^1,\bar{D}^2\in \suft$, then $\bar{D}^1\circledast
  \bar{D}^2 \in\suft$.
\item The mapping $\circledast:\suft\times\suft\longrightarrow \suft$ is
  right continuous.
\item The operation $\circledast$ is associative.
\end{enumerate}
\end{proposition}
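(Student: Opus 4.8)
\textbf{Proof proposal for Proposition \ref{correctop}.}

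The plan is to verify the three clauses in order, using Observation \ref{obsforassoc} as the main tool, and modelling the whole argument on the well-known facts about the operation $D_1\oplus D_2$ on $\beta\omega$ (and on the loose analogue \ref{obstwo}). For clause (1) I would fix $n<\omega$ and check that the family
\[D_n=\big\{A\subseteq\post(\bar{t}\upl n):\set^n_{\bar{t}}(A,\bar{D}^1)\in D^2_n\big\}\]
is a non-principal ultrafilter on $\post(\bar{t}\upl n)$. The key computation is that $f\mapsto f\circledast A$ commutes with Boolean operations in the appropriate sense: for each fixed $f\in\post(\bar{t}\upl n)$ one has $f\circledast(A\cap B)=(f\circledast A)\cap(f\circledast B)$ and $f\circledast(\post(\bar{t}\upl n)\setminus A)=\post(\bar{t}\upl x_f)\setminus(f\circledast A)$, directly from the definition of $\circledast$ in \ref{sufs}(2) and the fact (\ref{obsforassoc}(1)) that $g$ ranges over $\post(\bar{t}\upl x_f)$. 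Since each $D^1_{x_f}$ is an ultrafilter, this shows $\set^n_{\bar{t}}(A,\bar{D}^1)$ behaves Boolean-homomorphically in $A$: $\set^n_{\bar{t}}(A\cap B,\bar{D}^1)=\set^n_{\bar{t}}(A,\bar{D}^1)\cap\set^n_{\bar{t}}(B,\bar{D}^1)$ and $\set^n_{\bar{t}}(\post(\bar{t}\upl n)\setminus A,\bar{D}^1)=\post(\bar{t}\upl n)\setminus\set^n_{\bar{t}}(A,\bar{D}^1)$. Pulling back through the ultrafilter $D^2_n$ gives that $D_n$ is an ultrafilter; non-principality follows because for any single $h\in\post(\bar{t}\upl n)$ the set $\set^n_{\bar{t}}(\{h\},\bar{D}^1)$ is empty (as each $D^1_m$ is non-principal, no $f\circledast\{h\}$, being at most a singleton, lies in $D^1_{x_f}$), so $\{h\}\notin D_n$.

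For clause (2), right continuity, I fix $\bar{D}^1$ and a basic open neighbourhood ${\rm Nb}_{\bar{A}}$ of $\bar{D}^1\circledast\bar{D}^2$, where $\bar{A}=\langle A_0,\dots,A_n\rangle$. Membership $\bar{D}^1\circledast\bar{D}^2\in{\rm Nb}_{\bar{A}}$ means $\set^\ell_{\bar{t}}(A_\ell,\bar{D}^1)\in D^2_\ell$ for all $\ell\le n$; but $\set^\ell_{\bar{t}}(A_\ell,\bar{D}^1)$ depends only on $\bar{D}^1$, so the single basic neighbourhood ${\rm Nb}_{\bar{B}}$ with $\bar{B}=\langle\set^0_{\bar{t}}(A_0,\bar{D}^1),\dots,\set^n_{\bar{t}}(A_n,\bar{D}^1)\rangle$ of $\bar{D}^2$ maps into ${\rm Nb}_{\bar{A}}$ under $\circledast$. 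This is the exact analogue of the standard proof that $\oplus$ on $\beta\omega$ is right continuous, and it is essentially immediate once the notation is unwound.

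For clause (3), associativity, I compute the $n$-th coordinate of $(\bar{D}^1\circledast\bar{D}^2)\circledast\bar{D}^3$ and of $\bar{D}^1\circledast(\bar{D}^2\circledast\bar{D}^3)$ and show they have the same members. Unwinding the definitions, $A$ lies in the $n$-th coordinate of $\bar{D}^1\circledast(\bar{D}^2\circledast\bar{D}^3)$ iff $\{g\in\post(\bar{t}\upl n):\set^{x_g}_{\bar{t}}(g\circledast A,\bar{D}^1)\in D^3_{x_g}\}\in D^3_n$, and in the $n$-th coordinate of $(\bar{D}^1\circledast\bar{D}^2)\circledast\bar{D}^3$ iff $\{h\in\post(\bar{t}\upl n):h\circledast A\in (\bar{D}^1\circledast\bar{D}^2)_{x_h}\}\in D^3_n$, i.e.\ iff $\{h:\set^{x_h}_{\bar{t}}(h\circledast A,\bar{D}^1)\in D^2_{x_h}\}\in D^3_n$. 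The identification of these two conditions hinges on the cocycle identity $(f\cup g)\circledast A=g\circledast(f\circledast A)$ from \ref{obsforassoc}(2) together with $x_{f\cup g}=x_g$ (which holds because $f\cup g\in\val[s']$ for an $s'\in\Sigma(t_n,\dots,t_{x_g-1})$, by \ref{obsforassoc}(1) and \ref{sufs}(1)); these let one rewrite the nested $\set$-conditions as iterated membership statements and push the quantifiers through the ultrafilters $D^2_m$, $D^3_m$ coordinate by coordinate. This bookkeeping — keeping track of which index $x_f$ each inner ultrafilter is taken at, and checking it is preserved under the $\circledast$ operations — is the only delicate point; I expect it to be the main obstacle, but it is purely combinatorial and goes through exactly as in the classical associativity proof for $\oplus$ on $\beta\omega$, now carried out uniformly in $n$. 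Once the index-tracking is in place, the three clauses follow routinely, and I would simply refer back to \ref{obsforassoc} for the two structural facts that make it work.
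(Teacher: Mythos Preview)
Your proposal is correct and follows the paper's approach essentially line for line: clause (1) via the Boolean identities for $f\circledast(\cdot)$ and hence for $\set^n_{\bar{t}}(\cdot,\bar{D}^1)$, clause (2) via the preimage neighbourhood ${\rm Nb}_{\bar{B}}$ with $B_\ell=\set^\ell_{\bar{t}}(A_\ell,\bar{D}^1)$, and clause (3) via the cocycle identity of \ref{obsforassoc}(2) together with $x_{f\cup g}=x_g$. One slip: in your first displayed unwinding for associativity the inner ultrafilter should be $D^2_{x_g}$, not $D^3_{x_g}$ (and strictly the direct unwinding of $\bar{D}^1\circledast(\bar{D}^2\circledast\bar{D}^3)$ gives $g\circledast\set^n_{\bar{t}}(A,\bar{D}^1)\in D^2_{x_g}$, which you then identify with $\set^{x_g}_{\bar{t}}(g\circledast A,\bar{D}^1)\in D^2_{x_g}$ using precisely the cocycle identity); with that correction your two displayed conditions literally coincide and the argument is complete.
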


\begin{proof}
(1)\qquad Let $\bar{D}^1,\bar{D}^2\in \suft$, $n<\omega$, and
\[D_n=\big\{A\subseteq\post(\bar{t}\upl n):\set^n_{\bar{t}}(
A,\bar{D}^1)\in \bar{D}^2_n\big\}.\]
Let $A,B\subseteq\post(\bar{t}\upl n)$.

\noindent (a)\quad  If $f\in\post(\bar{t}\upl n)$ and $A$ is
finite, then $f\circledast A$ is finite as well, so it does not belong to
$D^1_{x_f}$. Consequently, if $A$ is finite then $\set^n_{\bar{t}} (A,
\bar{D}^1)=\emptyset$ and $A\notin D_n$.  

\noindent (b)\quad  $\set^n_{\bar{t}}(\post(\bar{t}\upl n),
\bar{D}^1)=\post(\bar{t}\upl n)\in D^2_n$ (note
\ref{obsforassoc}(1)). Thus $\post(\bar{t}\upl n)\in D_n$. 

\noindent (c)\quad  If $A\subseteq B$ then $\set^n_{\bar{t}}(A,\bar{D}^1)
\subseteq \set^n_{\bar{t}}(B,\bar{D}^1)$ and hence 
\[A\subseteq B\ \&\ A\in D_n\ \ \Rightarrow\ \ B\in D_n.\]

\noindent (d)\quad  $\set^n_{\bar{t}}(A\cap B,\bar{D}^1)=
\set^n_{\bar{t}}(A,\bar{D}^1)\cap \set^n_{\bar{t}}(B,\bar{D}^1)$ and hence 
\[A,B\in D_n\ \ \Rightarrow\ \ A\cap B\in D_n.\] 

\noindent (e)\quad  $\set^n_{\bar{t}}(\post(\bar{t}\upl n)
\setminus A,\bar{D}^1)=\post(\bar{t}\upl n)\setminus
\set^n_{\bar{t}}(A,\bar{D}^1)$, and hence 
\[A\notin D_n\ \ \Rightarrow\ \ \post(\bar{t}\upl n)\setminus A\in
D_n.\] 

It follows from (a)--(e) that $D_n$ is a non-principal ultrafilter on
$\post(\bar{t}\upl n)$ and hence clearly $\bar{D}^1\circledast
\bar{D}^2\in\suft$.

(2)\qquad Fix $\bar{D}^1\in\suft$ and let $\bar{A}=\langle A_\ell:\ell\leq n
\rangle$, $A_\ell\subseteq\post(\bar{t}\upl \ell)$. For $\ell\leq
n$ put $B_\ell=\set^\ell_{\bar{t}}(A_\ell,\bar{D}^1)$ and let
$\bar{B}=\langle B_\ell:\ell\leq n\rangle$. Then for each $\bar{D}^2\in
\suft$ we have
\[\bar{D}^1\circledast\bar{D}^2\in {\rm Nb}_{\bar{A}}\ \ \mbox{ if and only
  if }\ \ \bar{D}^2\in {\rm Nb}_{\bar{B}}.\]

(3)\qquad Let $\bar{D}^1,\bar{D}^2,\bar{D}^3\in \suft$. Suppose $n<\omega$,
$A\subseteq \pos(\bar{t}\upl n)$. Then 
\begin{enumerate}
\item[(i)] $A\in \big((\bar{D}^1\circledast\bar{D}^2)\circledast
  \bar{D}^3\big)_n$\ \ iff\ \ $\set^n_{\bar{t}}(A,\bar{D}^1\circledast
  \bar{D}^2)\in D^3_n$\ \ iff\\
$\big\{f\in\post(\bar{t}\upl n): f\circledast A\in
(\bar{D}^1\circledast \bar{D}^2)_{x_f}\big\}\in D^3_n$, and 
\item[(ii)] $A\in\big(\bar{D}^1\circledast (\bar{D}^2\circledast\bar{D}^3)  
  \big)_n$\ \ iff\ \ $\set^n_{\bar{t}}(A,\bar{D}^1)\in (\bar{D}^2\circledast 
  \bar{D}^3)_n$\ \ iff\\ 
$\set^n_{\bar{t}}\big(\set^n_{\bar{t}}(A,\bar{D}^1),\bar{D}^2\big)\in
\bar{D}^3_n$\ \ iff\ \ $\big\{f\in\post(\bar{t}\upl n): f\circledast
\set^n_{\bar{t}}(A,\bar{D}^1)\in D^2_{x_f}\big\}\in D^3_n$. 
\end{enumerate}
Let us fix $f\in\post(\bar{t}\upl n)$ for a moment. Then 
\[\begin{array}{l}
f\circledast A\in \big(\bar{D}^1\circledast \bar{D}^2\big)_{x_f}\mbox{ iff } 
\set^{x_f}_{\bar{t}}(f\circledast A,\bar{D}^1)\in D^2_{x_f} \mbox{ iff}\\
\big\{g\in\post(\bar{t}\upl x_f): g\circledast (f\circledast A)
\in D^1_{x_g}\big\}\in D^2_{x_f}\mbox{ iff}\\
\big\{g\in\post(\bar{t}\upl x_f): (f\cup g)\circledast A\in
D^1_{x_g}\big\}\in D^2_{x_f}\mbox{ iff}\\
\big\{g\in \post(\bar{t}\upl x_f): (f\cup g)\in
\set^n_{\bar{t}}(A,\bar{D}^1) \big\}\in D^2_{x_f}\mbox{ iff } f\circledast
\set^n_{\bar{t}}(A,\bar{D}^1)\in D^2_{x_f}.
\end{array}\]
Consequently,
\[\big\{f\in\post(\bar{t}{\upl} n):f\circledast A\in (\bar{D}^1 
\circledast \bar{D}^2)_{x_f}\big\}=\big\{f\in\post(\bar{t} {\upl}
n): f\circledast \set^n_{\bar{t}}(A,\bar{D}^1)\in D^2_{x_f}\big\}\]
and (by (i)+(ii)) $A\in \big(\bar{D}^1\circledast (\bar{D}^2 \circledast
\bar{D}^3)\big)_n$ if and only if $A\in \big((\bar{D}^1\circledast
\bar{D}^2) \circledast \bar{D}^3\big)_n$. 
\end{proof}

\begin{definition}
\label{sufstar}
Let $(K,\Sigma)$ be a tight FFCC pair for $\bH$ and $\bar{t}\in\PCt
(K,\Sigma)$.
\begin{enumerate}
\item For $n<\omega$, $\cB^n_{\bar{t}}$ is the family of all sets
  $B\subseteq \post(\bar{t}\upl n)$ such that for some $M$ we
  have:\\
if $s\in\Sigt(\bar{t}\upl n)$ and $\nor[s]\geq M$, then $\val[s]\cap 
B\neq\emptyset$. 
\item $\sufs$ is the family of all $\bar{D}=\langle D_n:n<\omega\rangle
  \in\suft$ such that $\cB^n_{\bar{t}}\subseteq D_n$ for all $n<\omega$.  
\end{enumerate}
\end{definition}

\begin{proposition}
\label{n19}
Let $(K,\Sigma)$ be a tight FFCC pair for $\bH$ and $\bar{t}\in\PCt
(K,\Sigma)$.
\begin{enumerate}
\item $\sufs$ is a closed subset of $\suft$.
\item If $(K,\Sigma)$ has the weak \ttt--bigness for $\bar{t}$, then
  $\sufs\neq\emptyset$. 
\item If $(K,\Sigma)$ has the weak \ttt--additivity for $\bar{t}$, then
  $\sufs$ is closed under $\circledast$.
\end{enumerate}
\end{proposition}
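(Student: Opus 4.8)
The plan is to treat the three parts in turn, mirroring the loose case: part (1) is the tight analogue of Observation \ref{obs}, part (2) of the ``$\cA^1_{\bar t}$ has fip'' step inside \ref{g7d}, and part (3) of Proposition \ref{g7}.

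\textbf{(1)} This is purely topological and needs no hypothesis on $(K,\Sigma)$. Since membership in $\sufs$ is a coordinatewise condition, I would write
\[\sufs=\bigcap_{n<\omega}\ \bigcap_{B\in\cB^n_{\bar{t}}}\ \big\{\bar{D}\in\suft:B\in D_n\big\}.\]
For fixed $n$ and $B\subseteq\post(\bar{t}\upl n)$ the set $\{D:B\in D\}$ is clopen in the Stone space $\beta_*\big(\post(\bar{t}\upl n)\big)$, so its preimage under the $n$-th coordinate projection is closed in $\suft$; an intersection of closed sets is closed.

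\textbf{(2)} Because $\sufs$ is defined coordinatewise, it suffices to produce, for each $n$ separately, a non-principal ultrafilter $D_n$ on $\post(\bar{t}\upl n)$ extending $\cB^n_{\bar{t}}$. First I would check that $\cB^n_{\bar{t}}$ has the finite intersection property: given $B_0,\dots,B_{k-1}\in\cB^n_{\bar{t}}$ with witnesses $M_0,\dots,M_{k-1}$ and $M=\max_i M_i$, split $\post(\bar{t}\upl n)=\bigcup_{\eta\in{}^k2}\cF_\eta$ with $\cF_\eta=\{f:(\forall i<k)(f\in B_i\Leftrightarrow\eta(i)=1)\}$, and apply the weak \ttt--bigness for $\bar{t}$ (with $n_2=M$) to get $s\in\Sigt(\bar{t}\upl n)$ with $\nor[s]\ge M$ and $\val[s]\subseteq\cF_{\eta_0}$ for some $\eta_0$; since $\nor[s]\ge M_i$ forces $\val[s]\cap B_i\neq\emptyset$, we get $\eta_0(i)=1$ for every $i$, hence $\emptyset\neq\val[s]\subseteq\bigcap_{i<k}B_i$. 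Then I would observe that \emph{any} ultrafilter extending $\cB^n_{\bar{t}}$ is automatically non-principal: for each $m<\omega$ the set $C_m:=\{f\in\post(\bar{t}\upl n):x_f\ge m\}$ lies in $\cB^n_{\bar{t}}$, because by \ref{g1}(2)(b),(d) the family $\bigcup\{\Sigma(t_n,\dots,t_{n+i}):n+i+1<m\}$ is finite and hence of bounded norm, so any $s\in\Sigt(\bar{t}\upl n)$ of large enough norm lies in some $\Sigma(t_n,\dots,t_{n+i})$ with $n+i+1\ge m$, whence $\val[s]\subseteq C_m$; since $\bigcap_m C_m=\emptyset$, no principal ultrafilter contains all the $C_m$. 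Combining the two points, $\cB^n_{\bar{t}}$ extends to an ultrafilter and every such extension is non-principal; choose one for each $n$.

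\textbf{(3)} This is the tight counterpart of \ref{g7}, and I would follow that proof line by line, the only genuinely new point being that one must track the end-indices $x_f$ and feed each $f$ to the appropriate ultrafilter $D^1_{x_f}$ of the sequence. By \ref{correctop}(1) we already know $\bar{D}^1\circledast\bar{D}^2\in\suft$, so it is enough to show $\cB^n_{\bar{t}}\subseteq(\bar{D}^1\circledast\bar{D}^2)_n$ for every $n$. Let $\bfun$ witness the weak \ttt--additivity for $\bar{t}$, fix $n$ and $A\in\cB^n_{\bar{t}}$ with witness $M$. Unwinding the definition of $\circledast$ and using $\cB^n_{\bar{t}}\subseteq D^2_n$ (as $\bar{D}^2\in\sufs$), it suffices to prove $\set^n_{\bar{t}}(A,\bar{D}^1)\in\cB^n_{\bar{t}}$ with witness $\bfun(n+M)$. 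So take $s_0\in\Sigt(\bar{t}\upl n)$ with $\nor[s_0]\ge\bfun(n+M)$, say $s_0\in\Sigma(t_n,\dots,t_{k-1})$, so that every $f\in\val[s_0]$ has $x_f=k$, and set $B=\bigcup\{f\circledast A:f\in\val[s_0]\}\subseteq\post(\bar{t}\upl k)$. The heart of the matter is that $B\in\cB^k_{\bar{t}}$ with witness $\bfun(k-1+M)$: for $s_1\in\Sigt(\bar{t}\upl k)$ with $\nor[s_1]\ge\bfun(k-1+M)$, say $s_1\in\Sigma(t_k,\dots,t_\ell)$, the index and norm requirements of \ref{defadd}(2) are met by $s_0,s_1$, yielding $s\in\Sigma(t_n,\dots,t_\ell)\subseteq\Sigt(\bar{t}\upl n)$ with $\nor[s]\ge M$ and $\val[s]\subseteq\{f\cup g:f\in\val[s_0],\ g\in\val[s_1]\}$; the witness $M$ for $A$ gives $f\cup g\in A$ for some $f\in\val[s_0]$, $g\in\val[s_1]$, whence $g\in(f\circledast A)\cap\val[s_1]\subseteq B\cap\val[s_1]$. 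Since $\bar{D}^1\in\sufs$ gives $\cB^k_{\bar{t}}\subseteq D^1_k$, we get $B\in D^1_k$, and because $\val[s_0]$ is finite, $f\circledast A\in D^1_k=D^1_{x_f}$ for some $f\in\val[s_0]$, i.e.\ $\val[s_0]\cap\set^n_{\bar{t}}(A,\bar{D}^1)\neq\emptyset$, as needed. Then $\set^n_{\bar{t}}(A,\bar{D}^1)\in\cB^n_{\bar{t}}\subseteq D^2_n$, so $A\in(\bar{D}^1\circledast\bar{D}^2)_n$.

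I expect parts (1) and (2) to be essentially routine once the observation $C_m\in\cB^n_{\bar{t}}$ (which secures non-principality) is isolated. The delicate step is the index bookkeeping in part (3): applying \ref{defadd}(2) with $s_0\in\Sigma(t_n,\dots,t_{k-1})$, $s_1\in\Sigt(\bar{t}\upl k)$ and the thresholds $\bfun(n+M)$ and $\bfun(k-1+M)$, and then remembering that the $f$ thus produced must be paired with $D^1_{x_f}=D^1_k$ rather than with a single fixed ultrafilter — this is exactly where the switch from ``one ultrafilter'' to ``a sequence of ultrafilters'' makes itself felt.
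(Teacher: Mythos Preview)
Your proof is correct and follows essentially the same route as the paper's: part (1) via a topological observation, part (2) via the fip of $\cB^n_{\bar{t}}$ using the partition into the sets $\cF_\eta$, and part (3) via the tight analogue of Claim \ref{cl1}, showing that $\set^n_{\bar{t}}(B,\bar{D}^1)\in\cB^n_{\bar{t}}$ by fixing $s_0$, forming $\bigcup\{f\circledast B:f\in\val[s_0]\}$, and using weak \ttt--additivity. The only noteworthy difference is that in (2) you explicitly argue non-principality via the sets $C_m$, a point the paper leaves implicit; this is a small but welcome clarification rather than a different approach.
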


\begin{proof}
(1)\qquad Suppose $\bar{D}\in\suft\setminus\sufs$. Let $n<\omega$ and
$B\in\cB^n_{\bar{t}}$ be such that $B\notin D_n$. Set $A_n=\post(\bar{t}\upl
n)\setminus B$ and $A_\ell=\post(\bar{t}\upl \ell)$ for $\ell<n$, and let 
$\bar{A}=\langle A_0,\ldots,A_n\rangle$. Then $\bar{D}\in {\rm Nb}_{\bar{A}}
\subseteq \suft\setminus\sufs$. 

\noindent (2)\qquad It is enough to show that, assuming the weak
\ttt--bigness, each family $\cB^n_{\bar{t}}$ has fip. To this end suppose
that $B_0,\ldots, B_{m-1}\in \cB^n_{\bar{t}}$. Pick $M_0$ such that 
\begin{enumerate}
\item[$(*)$] $\big(\forall s\in\Sigt(\bar{t}\upl n)\big)
  \big(\forall \ell<m\big)\big(\nor[s]\geq M_0\ \Rightarrow\ B_\ell\cap 
  \val[s]\neq \emptyset\big)$.
\end{enumerate}
For $\eta\in {}^m2$ set $C_\eta=\{f\in\post(\bar{t}\upl n):
(\forall \ell<m)(f\in B_\ell\ \Leftrightarrow\ \eta(\ell)=1)\}$. By the weak
\ttt--bigness we may choose $\eta$ and $s\in\Sigt(\bar{t}\upl n)$
such that $\nor[s]>M_0$ and $\val[s]\subseteq C_\eta$. Then (by $(*)$) we
also have $\eta(\ell)=1$ and $\val[s]\subseteq B_\ell$ for all
$\ell<m$. Hence $\emptyset\neq\val[s]\subseteq\bigcap\limits_{\ell<m}
B_\ell$. 

\noindent (3)\qquad Let $\bfun:\omega\longrightarrow\omega$ witness the weat
\ttt--additivity of $(K,\Sigma)$ for $\bar{t}$. Suppose that
$\bar{D}^1,\bar{D}^2\in\sufs$, $\bar{D}=\bar{D}^1\circledast\bar{D}^2$. We
have to show that for each $n<\omega$, $\cB^n_{\bar{t}}\subseteq D_n$
(remember \ref{correctop}(1)). To this end assume that $B\in
\cB^n_{\bar{t}}$ and let $M$ be such that 
\[(\forall s\in\Sigt(\bar{t}\upl n))(\nor[s]\geq M\ \Rightarrow\ \val[s]\cap
B\neq \emptyset).\]

\begin{claim}
\label{cl5}
If $s\in \Sigt(\bar{t}\upl n)$ is such that $\nor[s]\geq \bfun(n+M)$,\\
then $\val[s]\cap \set^n_{\bar{t}}(B,\bar{D}^1)\neq\emptyset$.
\end{claim}

\begin{proof}[Proof of the Claim]
Fix $s_0\in\Sigma(t_n,\ldots,t_{m-1})$ such that $\nor[s_0]\geq
\bfun(n+M)$. Let $A=\bigcup\{f\circledast B:f\in\val[s_0]\}$. We claim that 
\begin{enumerate}
\item[$(\vartriangle)$] $A\in\cB^m_{\bar{t}}$.
\end{enumerate}
[Why? Set $N=\bfun(m+M)+957$. Suppose $s_1\in\Sigt (\bar{t}\upl m)$ has norm
$\nor[s_1]\geq N$. By the weak \ttt--additivity and the choice of $N$ we can
find $s\in\Sigt(\bar{t}\upl n)$ such that $\nor[s]\geq M$ and $\val[s]
\subseteq \{f\cup g:f\in\val[s_0],\ g\in \val[s_1]\}$. By the choice of $M$
we have $B\cap \val[s]\neq \emptyset$, so for some $f\in \val[s_0]$ and
$g\in \val[s_1]$ we have $g\in f\circledast B$. Thus $\val[s_1]\cap
A\neq\emptyset$ and we easily conclude that $A\in\cB^m_{\bar{t}}$.]

But $\bar{D}^1\in\sufs$, so $\cB^m_{\bar{t}}\subseteq D^1_m$ and hence, for
some $f\in\val[s_0]$, we get $f\circledast B\in D^1_{x_f}$. Then $f\in
\val[s_0]\cap \set^n_{\bar{t}}(B,\bar{D}^1)$. 
\end{proof}
It follows from \ref{cl5} that $\set^n_{\bar{t}}(B,\bar{D}^1)\in
\cB^n_{\bar{t}} \subseteq D^2_n$, so $B\in D_n$ as required. 
\end{proof}

\begin{corollary}
\label{new2.8}
Assume that $(K,\Sigma)$ is a tight FFCC pair with the weak \ttt--additivity
and the weak \ttt--bigness for $\bar{t}\in\PCt(K,\Sigma)$. Then there is
$\bar{D}\in\sufs$ such that $\bar{D}\circledast\bar{D}=\bar{D}$. 
\end{corollary}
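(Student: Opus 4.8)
The plan is to invoke the abstract idempotent-point lemma \ref{z4}, exactly as in the loose case (\ref{g8}). For this I need three ingredients, all of which are essentially already assembled in the preceding propositions: first, that the space $\sufs$ is a non-empty compact Hausdorff space; second, that $\circledast$ restricts to a binary operation on $\sufs$; and third, that $\circledast$ is associative and right continuous on $\sufs$.

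First I would argue compactness. By \ref{obsforassoc}(3) the ambient space $\suft = \prod_{n<\omega}\beta_*(\post(\bar{t}\upl n))$ is compact Hausdorff (a product of compact Hausdorff spaces of non-principal ultrafilters). By \ref{n19}(1) the set $\sufs$ is a closed subset of $\suft$, hence $\sufs$ is itself compact Hausdorff; and by \ref{n19}(2), the weak \ttt--bigness hypothesis guarantees $\sufs\neq\emptyset$. Next, by \ref{correctop}(1) we already know $\circledast$ maps $\suft\times\suft$ into $\suft$, and by \ref{n19}(3) the weak \ttt--additivity hypothesis ensures that $\sufs$ is closed under $\circledast$; so $\circledast$ restricts to a binary operation $\sufs\times\sufs\to\sufs$. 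Finally, \ref{correctop}(2) gives right continuity of $\circledast$ on $\suft$ (hence on the subspace $\sufs$), and \ref{correctop}(3) gives associativity of $\circledast$ on $\suft$ (hence on $\sufs$, since the operation on $\sufs$ is the restriction).

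With all the hypotheses of Lemma \ref{z4} verified --- take $X=\sufs$ (non-empty compact Hausdorff) and $\odot=\circledast$ (associative, right continuous) --- the lemma yields a $\circledast$--idempotent point $\bar{D}\in\sufs$, i.e.\ $\bar{D}\circledast\bar{D}=\bar{D}$, which is exactly the assertion.

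I do not expect any real obstacle here: the corollary is a pure bookkeeping assembly of \ref{obsforassoc}(3), \ref{n19}, \ref{correctop} and \ref{z4}, entirely parallel to the derivation of \ref{g8}(2) in the loose case. The only point that deserves a line of care is that the two hypotheses are used in different places --- weak \ttt--bigness only for non-emptiness of $\sufs$ via \ref{n19}(2), and weak \ttt--additivity only for closure of $\sufs$ under $\circledast$ via \ref{n19}(3) --- while compactness, associativity and right continuity of $\circledast$ hold already on the full space $\suft$ without either hypothesis.
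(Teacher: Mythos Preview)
Your proposal is correct and matches the paper's approach exactly: the paper's proof is the single line ``By \ref{z4}+\ref{obsforassoc}(3)+\ref{correctop}+\ref{n19}'', and your write-up is precisely the unpacking of this into the verification that $\sufs$ is a non-empty compact Hausdorff space on which $\circledast$ is associative and right continuous, so that Lemma~\ref{z4} applies.
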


\begin{proof}
By \ref{z4}+\ref{obsforassoc}(3)+\ref{correctop}+\ref{n19}. 
\end{proof}

\begin{theorem}
\label{new2.9}
Assume that $(K,\Sigma)$ is a tight FFCC pair, $\bar{t}=\langle
t_n:n<\omega\rangle\in\PCt(K,\Sigma)$. Suppose also that 
\begin{enumerate}
\item[(a)] $\bar{D}=\langle D_n:n<\omega\rangle\in\sufs$ is such that
  $\bar{D}\circledast \bar{D}=\bar{D}$, and 
\item[(b)] $\bar{A}=\langle A_n:n<\omega\rangle$ is such that $A_n\in D_n$
  for all $n<\omega$.
\end{enumerate}
Then there is $\bar{s}=\langle s_i:i<\omega\rangle\in\PCt(K,\Sigma)$ such
that $\bar{s}\geq \bar{t}$, $m^{s_0}_\dn=m^{t_0}_\dn$ and if $i<\omega$,
$s_i\in\Sigt(\bar{t}\upl k)$, then $\post(\bar{s}\upl i)\subseteq A_k$.
\end{theorem}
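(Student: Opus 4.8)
The argument will be the tight counterpart of the proof of Theorem \ref{g9}, with $\oplus$ and the single ultrafilter $D$ replaced by $\circledast$ and the sequence $\bar D$, at the price of some extra index bookkeeping. The key lemma, the analogue of Claim \ref{cl2}, is: if $n<\omega$, $B\in D_n$ and $M<\omega$, then there is $s\in\Sigt(\bar t\upl n)$ such that $\nor[s]\ge M$, $\val[s]\subseteq B$, and $f\circledast B\in D_{x_f}$ for every $f\in\val[s]$. To prove it, put $B'=\set^n_{\bar t}(B,\bar D)$. Since $\bar D\circledast\bar D=\bar D$, Definition \ref{sufs}(6) gives $D_n=\{A\subseteq\post(\bar t\upl n):\set^n_{\bar t}(A,\bar D)\in D_n\}$, so $B\in D_n$ yields $B'\in D_n$, hence $B'':=B\cap B'\in D_n$ and $\post(\bar t\upl n)\setminus B''\notin D_n$. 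As $\bar D\in\sufs$ we have $\cB^n_{\bar t}\subseteq D_n$, so $\post(\bar t\upl n)\setminus B''\notin\cB^n_{\bar t}$; unwinding Definition \ref{sufstar}(1), for the given $M$ there is $s\in\Sigt(\bar t\upl n)$ with $\nor[s]\ge M$ and $\val[s]\cap\big(\post(\bar t\upl n)\setminus B''\big)=\emptyset$, i.e.\ $\val[s]\subseteq B''\subseteq B\cap B'$, which is what is needed.

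Using this lemma I would build $\bar s=\langle s_i:i<\omega\rangle$ together with a strictly increasing sequence $0=k_0<k_1<k_2<\cdots$ and sets $B_i$ such that, for all $i$, $s_i\in\Sigma(t_{k_i},\dots,t_{k_{i+1}-1})$ with $\nor[s_i]\ge i$, $B_i\in D_{k_i}$, $B_i\subseteq A_{k_i}$, $\val[s_i]\subseteq B_i$, and $B_{i+1}\subseteq f\circledast B_i$ for every $f\in\val[s_i]$. Start with $B_0=A_0\in D_0$ (hypothesis (b)) and apply the lemma with $n=0$, $M=0$ to obtain $s_0$; then $k_1$ is the unique number with $s_0\in\Sigma(t_0,\dots,t_{k_1-1})$, so $x_f=k_1$ for all $f\in\val[s_0]$ and the lemma also gives $f\circledast B_0\in D_{k_1}$. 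Given $s_i$ and $B_i$, set $B_{i+1}=A_{k_{i+1}}\cap\bigcap\{f\circledast B_i:f\in\val[s_i]\}$; this is a finite intersection of members of $D_{k_{i+1}}$ (using $A_{k_{i+1}}\in D_{k_{i+1}}$ and $f\circledast B_i\in D_{k_{i+1}}$), hence $B_{i+1}\in D_{k_{i+1}}$ and $B_{i+1}\subseteq A_{k_{i+1}}$, and the lemma with $n=k_{i+1}$, $M=i+1$ produces $s_{i+1}$ and $k_{i+2}$. Note $m^{s_0}_\dn=m^{t_0}_\dn$ by \ref{g1}(2)(d), and, crucially, since $\bar t$ is \emph{tight}, $m^{s_i}_\up=m^{t_{k_{i+1}}}_\dn=m^{s_{i+1}}_\dn$, so the consecutive creature blocks glue with no domain shift; thus $\bar s\in\PCt(K,\Sigma)$, and $\bar t\le\bar s$ is witnessed by the intervals $u_i=[k_i,k_{i+1})$.

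It remains to verify $\post(\bar s\upl i)\subseteq A_{k_i}$. By induction on $j-i$ one shows that $f_i\cup\dots\cup f_j\in B_i$ whenever $f_\ell\in\val[s_\ell]$ for $i\le\ell\le j$: the base case $j=i$ is $\val[s_i]\subseteq B_i$, and in the inductive step the inductive hypothesis gives $g:=f_{i+1}\cup\dots\cup f_j\in B_{i+1}\subseteq f_i\circledast B_i$, so $f_i\cup g\in B_i$ by the definition of $\circledast$ (Definition \ref{sufs}(2)). By \ref{g1}(4)(f$^{\rm tight}$) every element of $\post(\bar s\upl i)$ is such a union $f_i\cup\dots\cup f_j$, whence $\post(\bar s\upl i)\subseteq B_i\subseteq A_{k_i}$. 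Finally, if $s_i\in\Sigt(\bar t\upl k)$ then $k=k_i$, since $m^{s_i}_\dn=m^{t_k}_\dn$ and $\langle m^{t_\ell}_\dn:\ell<\omega\rangle$ is strictly increasing; so $\post(\bar s\upl i)\subseteq A_k$, as required.

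The genuinely delicate part is not any single deep step but keeping the index bookkeeping straight — identifying $x_f=k_{i+1}$ for $f\in\val[s_i]$, using tightness of $\bar t$ so that the successive creature blocks match with no shift (so that $\bar s$ is again a tight candidate and the $u_i$'s witness $\bar t\le\bar s$), and recognising that every element of $\post(\bar s\upl i)$ decomposes as a union along the $s_\ell$'s — together with the routine but careful unwinding of the definitions of $\circledast$ and $\set^n_{\bar t}$ in the proof of the key lemma.
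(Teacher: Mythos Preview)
Your proof is correct and follows essentially the same approach as the paper's: the key lemma is exactly the paper's Claim \ref{cl6}, the inductive construction of $s_i,k_i,B_i$ matches the paper's clauses (i)--(iv), and your final induction on $j-i$ showing $f_i\cup\dots\cup f_j\in B_i$ is the same argument as the paper's Claim \ref{cl7} (which is phrased for arbitrary $s\in\Sigma(s_i,\dots,s_{i+k})$ but reduces via (f$^{\rm tight}$) to precisely your statement). The only differences are cosmetic, such as your use of $\nor[s_i]\ge i$ versus the paper's $\nor[s_i]\ge i+1$.
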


\begin{proof}
Let $(K,\Sigma),\bar{t},\bar{D}$ and $\bar{A}$ be as in the
assumptions. Then, in particular, $\cB^k_{\bar{t}}\subseteq D_k$ for all
$k<\omega$. 

\begin{claim}
\label{cl6}
If $M,k<\omega$ and $B\in D_k$, then there is $s\in \Sigt(\bar{t}\upl k)$
such that $\val[s]\subseteq B$, $\nor[s]\geq M$ and $(\forall f\in
\val[s])(f\circledast B\in D_{x_f})$.
\end{claim} 

\begin{proof}[Proof of the Claim]
Since $\bar{D}=\bar{D}\circledast\bar{D}$ and $B\in D_k$, we know that
$\set^k_{\bar{t}}(B,\bar{D})\in D_k$ and thus $B\cap\set^k_{\bar{t}}(B,
\bar{D}) \in D_k$. Hence $\post(\bar{t}\upl k)\setminus (B\cap
\set^k_{\bar{t}}(B,\bar{D}))\notin \cB^k_{\bar{t}}$ and we may find $s\in
\Sigt(\bar{t}\upl k)$ such that $\nor[s]\geq M$ and $\val[s]\subseteq B\cap 
\set^k_{\bar{t}}(B,\bar{D})$. This $s$ is as required in the assertion of
the claim.
\end{proof}
Now we choose inductively $s_i,B_i,k_i$ (for $i<\omega$) such that
\begin{enumerate}
\item[(i)] $B_0=A_0$, $k_0=0$,
\item[(ii)] $B_i\in D_{k_i}$, $B_i\subseteq A_{k_i}$, $k_i<k_{i+1}<\omega$,
  $s_i \in \Sigma(t_{k_i},\ldots,t_{k_{i+1}-1})$, 
\item[(iii)] $\val[s_i]\subseteq B_i$, $\nor[s_i]\geq i+1$,
\item[(iv)] if $f\in\val[s_i]$, then $B_{i+1}\subseteq f\circledast B_i\in
  D_{k_{i+1}}$. 
\end{enumerate}
Clause (i) determines $B_0$ and $k_0$. Suppose we have already chosen $k_i$
and $B_i\in D_{k_i}$. By \ref{cl6} we may find $k_{i+1}>k_i$ and $s_i\in
\Sigma(t_{k_i},\ldots, t_{k_{i+1}-1})$ such that 
\[\nor[s_i]\geq i+1,\quad \val[s_i]\subseteq B_i\quad \mbox{and }\quad
(\forall f\in\val[s_i])(f\circledast B_i\in D_{k_{i+1}}).\]
We let $B_{i+1}=A_{k_{i+1}}\cap\bigcap \{f\circledast B_i:f\in\val[s_i]\}\in
D_{k_{i+1}}$. One easily verifies the relevant demands in (ii)--(iv) for
$s_i,B_{i+1},k_{i+1}$. 
\smallskip

After the above construction is carried out, we set $\bar{s}=\langle
s_i:i<\omega\rangle$. Plainly, $\bar{s}\in\PCt(K,\Sigma)$, $\bar{s}\geq
\bar{t}$ and $m^{s_0}_\dn=m^{t_0}_\dn$.

\begin{claim}
\label{cl7}
For each $i,k<\omega$ and $s\in\Sigma(s_i,\ldots,s_{i+k})$ we have
$\val[s]\subseteq B_i$.
\end{claim}

\begin{proof}[Proof of the Claim]
Induction on $k<\omega$. If $k=0$ then the assertion of the claim follows
from clause (iii) of the choice of $s_i$. Assume we have shown the claim for
$k$. Suppose that $s\in \Sigma(s_i,\ldots,s_{i+k},s_{i+k+1})$, $i<\omega$,
and $f\in\val[s]$. Let $f_0=f\rest [m^{s_i}_\dn,m^{s_i}_\up)\in\val[s_i]$
and $f_1=f\rest [m^{s_{i+1}}_\dn,m^{s_{i+k+1}}_\up)\in\post(\bar{s}\upl
(i+1))$ (remember \ref{g1}(4)(f$^{\rm tight}$) and \ref{obsforassoc}(1)). By
the inductive hypothesis we know that $f_1\in B_{i+1}$, so by clause (iv) of
the choice of $s_i$ we get $f_1\in f_0\circledast B_i$ and thus $f=f_0\cup
f_1\in B_i$. 
\end{proof}

It follows from \ref{cl7} that for each $i<\omega$ we have $\post(\bar{s}
\upl i)\subseteq B_i\subseteq A_{k_i}$, as required.
\end{proof}

\begin{conclusion}
\label{new2.10}
Suppose that $(K,\Sigma)$ is a tight FFCC pair with weak \ttt--bigness and
weak \ttt--additivity for $\bar{t}\in\PCt(K,\Sigma)$.
\begin{enumerate}
\item[(a)] Assume that, for each $n<\omega$, $k_n<\omega$ and
  $d_n:\post(\bar{t}\upl n)\longrightarrow k_n$. Then there is $\bar{s}=
  \langle s_i:i<\omega\rangle\in\PCt(K,\Sigma)$ such that $\bar{s}\geq
  \bar{t}$, $m^{s_0}_\dn=m^{t_0}_\dn$ and for each $n<\omega$,\\
if $n$ is such that $s_i\in\Sigt(\bar{t}\upl n)$, then $d_n\rest \post(
\bar{s}\upl i)$ is constant.
\item[(b)] Suppose also that $(K,\Sigma)$ has \ttt--multiadditivity. Let 
  $d_n:\post(\bar{t}\upl n)\longrightarrow k$ (for $n<\omega$),
  $k<\omega$. Then there are $\bar{s}=\langle s_i:i<\omega\rangle \in
  \PCt(K,\Sigma)$ and $\ell<k$ such that $\bar{s}\geq \bar{t}$ and for each
  $i<\omega$, if $n$ is such that $s_i\in\Sigt(\bar{t}\upl n)$ and
  $f\in\post(\bar{s}\upl i)$, then $d_n(f)=\ell$. 
\end{enumerate}
\end{conclusion}

Now we will use \ref{new2.10} to give a new proof of Carlson--Simpson
Theorem. This theorem was used as a crucial lemma in the (inductive) proof
of the Dual Ramsey Theorem \cite[Theorem 2.2]{CS84}.

\begin{theorem}
\label{CS6.3new}
[Carlson and Simpson {\cite[Theorem 6.3]{CS84}}]
Suppose that $0<N<\omega$, $\bbX=\bigcup\limits_{n<\omega} {}^n N$ and
$\bbX=C_0\cup\ldots\cup C_k$, $k<\omega$. Then there exist a partition
$\{Y\}\cup \{Y_i:i<\omega\}$ of $\omega$ and a function $f:Y\longrightarrow
N$ such that 
\begin{enumerate}
\item[(a)] each $Y_i$ is a finite non-empty set,
\item[(b)] if $i<j<\omega$ then $\max(Y_i)<\min(Y_j)$,
\item[(c)] for some $\ell\leq k$:\\
if $i<\omega$, $g:\min(Y_i)\longrightarrow N$, $f\rest \min(Y_i)\subseteq g$
and $g\rest Y_j$ is constant for $j<i$, then $g\in C_\ell$.
\end{enumerate}
\end{theorem}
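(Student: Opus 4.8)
The plan is to apply Conclusion~\ref{new2.10}(b) to the tight FFCC pair $(K_N,\Sigma_N)$ of Example~\ref{CarlSim}. By \ref{CarlSim}(ii) that pair has \ttt--multiadditivity, so by Observation~\ref{obsbis}(3) it has \ttt--additivity and is gluing on every tight candidate, whence by \ref{obsbis}(2) it has the weak \ttt--additivity for every candidate; together with the weak \ttt--bigness from \ref{CarlSim}(iii) and the \ttt--multiadditivity itself, all hypotheses of \ref{new2.10}(b) are in force. For the base candidate I take the \emph{trivial} tight candidate $\bar t=\langle t_n:n<\omega\rangle$ with $m^{t_n}_\dn=n$, $m^{t_n}_\up=n+1$, $\dis[t_n]=(\varnothing,\varnothing)$, $\nor[t_n]=n+1$, hence $\val[t_n]={}^{\{n\}}N$. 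A direct check shows that for each $n$ the set $\post(\bar t\upl n)$ is precisely the family of all functions with domain an interval $[n,m)$ (some $m>n$) and values in $N$: any such function lies in $\val[s]$ for the creature $s\in\Sigma_N(t_n,\dots,t_{m-1})$ that makes only the last coordinate ``active''. In particular, identifying ${}^mN$ with the functions of domain $[0,m)$, we get $\post(\bar t)=\bbX\setminus\{\varnothing\}$.

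Next I colour. For $h\in\post(\bar t\upl n)$ with $\dom(h)=[n,m)$ let $h^\ast\in{}^{(m-n)}N$ be the left translate $h^\ast(j)=h(n+j)$, and set $d_n(h)$ to be the least $\ell\le k$ with $h^\ast\in C_\ell$. This is a coherent family: $d_n(h)$ depends on $h$ only through the word $h^\ast$. Feeding the $d_n$'s into \ref{new2.10}(b) (and securing $m^{s_0}_\dn=m^{t_0}_\dn=0$ as in \ref{new2.9}) I obtain $\bar s=\langle s_i:i<\omega\rangle\in\PCt(K_N,\Sigma_N)$ with $\bar s\ge\bar t$ and an $\ell\le k$ such that $d_{k_i}(h)=\ell$ for every $i$ and every $h\in\post(\bar s\upl i)$, where $k_i:=m^{s_i}_\dn$. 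Since $\bar s\ge\bar t$ and $\bar s$ is tight there are $0=k_0<k_1<k_2<\dots$ with $s_i\in\Sigma_N(t_{k_i},\dots,t_{k_{i+1}-1})$; writing $\dis[s_i]=(X_{s_i},\varphi_{s_i})$ we have $X_{s_i}\subsetneq[k_i,k_{i+1})$ and $\varphi_{s_i}\colon X_{s_i}\to N$.

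Now I read off the required objects: $Y:=\bigcup_{i<\omega}X_{s_i}$, $f:=\bigcup_{i<\omega}\varphi_{s_i}\colon Y\to N$, and $Y_i:=[k_i,k_{i+1})\setminus X_{s_i}$. Clauses (a) and (b) are immediate: each $Y_i$ is finite and non-empty (as $X_{s_i}$ is a proper subset of $[k_i,k_{i+1})$), $\max Y_i<k_{i+1}\le\min Y_{i+1}$, and $\{Y\}\cup\{Y_i:i<\omega\}$ partitions $\omega=\bigsqcup_i[k_i,k_{i+1})$ because $[k_i,k_{i+1})=X_{s_i}\sqcup Y_i$ and distinct blocks are disjoint. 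For (c) with this $\ell$ I must show: if $i<\omega$, $g\colon\min Y_i\to N$, $f\rest\min Y_i\subseteq g$ and $g\rest Y_j$ is constant for $j<i$, then $g\in C_\ell$. The idea is to realise $g$ (or the appropriate translate of it) inside one of the possibility sets $\post(\bar s\upl i')$ on which we already know $d_{k_{i'}}$ is constantly $\ell$: the definition of $\Sigma_N$ in Example~\ref{CarlSim} is exactly what is needed, since over $s_0,\dots,s_{i-1}$ one forms a creature that ``freezes'' block $j$ to carry the constant value $g\rest Y_j$ (for $j$ in an initial segment) while leaving one block ``active'', and $g$ then occurs in its $\val$; invoking the $i'=0$ instance (which reads $\post(\bar s)\subseteq C_\ell$) one gets $g\in C_\ell$.

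The substantial point — and the main obstacle — is precisely this last step. The word $g$ has domain $[0,\min Y_i)$, and $\min Y_i$ need not be one of the block boundaries $k_j$ of $\bar s$: the ``passive head'' $[k_i,\min Y_i)\subseteq X_{s_i}$, carrying the fixed values $\varphi_{s_i}\rest[k_i,\min Y_i)$ (a piece of $f$), must be absorbed in the right way, and one has to verify that the $C_\ell$-information delivered by \ref{new2.10}(b) along the shifted tails $\bar s\upl i'$ genuinely lines up with the increasing family of initial segments $[0,\min Y_i)$ coming from the various $g$'s. Everything else — checking the hypotheses of \ref{new2.10}(b) for $(K_N,\Sigma_N)$, the explicit description of $\post(\bar t\upl n)$, and clauses (a), (b) — is routine bookkeeping.
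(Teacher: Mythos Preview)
Your overall strategy is exactly the paper's: apply Conclusion~\ref{new2.10} to the pair $(K_N,\Sigma_N)$ of Example~\ref{CarlSim} over a tight candidate $\bar t$ with $m^{t_0}_\dn=0$, and read off $Y=\bigcup_i X_{s_i}$, $f=\bigcup_i\varphi_{s_i}$, $Y_i=[m^{s_i}_\dn,m^{s_i}_\up)\setminus X_{s_i}$ from the resulting $\bar s$. The paper invokes only part~(a) with the single colouring $d_0$; your detour through part~(b) and the shifted colourings $d_n$ is harmless but buys nothing extra.

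The difficulty you isolate in the last paragraph is genuine, and you have not closed it. Your sentence ``$g$ then occurs in its $\val$; invoking the $i'=0$ instance one gets $g\in C_\ell$'' does not go through: any $s\in\Sigma_N(s_0,\ldots,s_{i-1})$ has $\val[s]\subseteq{}^{[0,k_i)}N$ where $k_i=m^{s_i}_\dn$, so a function $g$ with $\dom(g)=[0,\min Y_i)$ belongs to $\post(\bar s)$ only when $\min Y_i=k_i$, i.e.\ when $m^{s_i}_\dn\notin X_{s_i}$. Nothing in \ref{new2.10} forces this, and your shifted tails $\bar s\upl i'$ do not help either: the conclusion of \ref{new2.10}(b) for your $d_n$'s only constrains words whose length is a difference $k_{i'+m}-k_{i'}$ of block boundaries and whose fixed/free pattern is a translate of that of $\bar s\upl i'$, which in general is unrelated to the required domain $[0,\min Y_i)$. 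The paper's own one-paragraph proof is equally silent on this point, so your write-up is no worse than the paper's --- but you have correctly put your finger on the step that needs an additional argument rather than merely ``bookkeeping''.
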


\begin{proof}
For $f\in\bbX$ let $d_0(f)=\min\{\ell\leq k:f\in C_\ell\}$. Consider the
tight FFCC pair $(K_N,\Sigma_N)$ defined in Example \ref{CarlSim}. It
satisfies the assumptions of \ref{new2.10}. Fix any $\bar{t}\in\PCt(K_N,
\Sigma_N)$ with $m^{t_0}_\dn=0$ and use \ref{new2.10}(a) to choose
$\bar{s}\in\PCt(K_N,\Sigma_N)$ such that $\bar{s}\geq\bar{t}$, $m^{s_0}_\dn=
m^{t_0}_\dn=0$ and $d_0\rest \post(\bar{s})$ is constant. Set
$Y=\bigcup\limits_{i<\omega} X_{s_i}$, $f=\bigcup\limits_{i<\omega}
\varphi_{s_i}$ and $Y_i=[m^{s_i}_\dn,m^{s_i}_\up)\setminus X_{s_i}$ for
$i<\omega$. 
\end{proof}

\section{Very weak bigness}
The assumptions of Conclusion \ref{new2.10} (weak \ttt--bigness and weak
\ttt--additivity) are somewhat strong. We will weaken them substantially
here, getting weaker but still interesting conclusion.  

\begin{definition}
\label{weaakbig}
Let $(K,\Sigma)$ be a tight FFCC pair for $\bH$, $\bar{t}\in
\PCt(K,\Sigma)$. 
\begin{enumerate}
\item For $n<m<\omega$ we define 
\[\pos(\bar{t}\rest [n,m))=\bigcup\{\val[s]:s\in\Sigma(t_n,\ldots,
t_{m-1})\}\]
and we also keep the convention that $\pos(\bar{t}\rest
[n,n))=\{\emptyset\}$.\\ 
\relax [Note that $\pos(\bar{t}\rest [n,m))=\{f_n\cup\ldots\cup f_{m-1}:
f_\ell\in\val[t_\ell]\mbox{ for }\ell<m\}$ (remember \ref{g1}(4)(f$^{\rm
  tight}$) and \ref{obsforassoc}(1)).] 
\item We say that $(K,\Sigma)$ has {\em the very weak \ttt--bigness for
    $\bar{t}$} if 
\begin{enumerate}
\item[$(\boxtimes)^{\rm vw}_{\bar{t}}$] for every $n,L,M<\omega$ and a
  partition $\cF_0\cup\ldots\cup\cF_L=\post(\bar{t}\upl n)$, there are
  $i_0=n\leq i_1<i_2\leq i_3$, $\ell\leq L$ and $g_0\in\pos(\bar{t}\rest
  [i_0,i_1))$, $g_2\in\pos(\bar{t}\rest [i_2,i_3))$ and $s\in
  \Sigma(t_{i_1},\ldots,t_{i_2-1})$ such that 
\[\nor[s]\geq M\quad\mbox{ and }\quad (\forall g_1\in\val[s])( g_0\cup
g_1\cup g_2\in\cF_\ell).\]
\end{enumerate}
\end{enumerate}
\end{definition}

\begin{observation}
\label{obsweak}
If a tight FFCC pair $(K,\Sigma)$ has the weak \ttt--bigness for $\bar{t}$,
then it has the very weak \ttt--bigness for $\bar{t}$. 
\end{observation}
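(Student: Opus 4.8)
The plan is to derive the very weak \ttt--bigness directly from the weak \ttt--bigness by a trivial specialization: take the witnessing data on the "outside" ($g_0$ and $g_2$) to be empty. Concretely, suppose $(K,\Sigma)$ has the weak \ttt--bigness for $\bar{t}$, fix $n,L,M<\omega$ and a partition $\cF_0\cup\ldots\cup\cF_L=\post(\bar{t}\upl n)$. The property $(\circledast)^{\bar{t}}_{\ttt}$ (applied with $n_1=L+1$ and $n_2=M$) yields $s\in\Sigt(\bar{t}\upl n)$ and $\ell\leq L$ with $\nor[s]\geq M$ and $\val[s]\subseteq\cF_\ell$. Since $s\in\Sigt(\bar{t}\upl n)$ means $s\in\Sigma(t_n,\ldots,t_{m-1})$ for some $m>n$, we set $i_0=i_1=n$, $i_2=m$, $i_3=m$, $g_0=g_2=\emptyset$. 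Then $g_0\in\pos(\bar{t}\rest[i_0,i_1))=\{\emptyset\}$ and likewise $g_2\in\pos(\bar{t}\rest[i_2,i_3))=\{\emptyset\}$, we have $s\in\Sigma(t_{i_1},\ldots,t_{i_2-1})$, and for every $g_1\in\val[s]$ we get $g_0\cup g_1\cup g_2=g_1\in\val[s]\subseteq\cF_\ell$, which is exactly what $(\boxtimes)^{\rm vw}_{\bar{t}}$ demands.

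The only thing to be careful about is the edge case $i_1=i_2$, i.e. whether the conventions in Definition~\ref{weaakbig} permit $g_0$ (or $g_2$) to be drawn from $\pos(\bar{t}\rest[i_1,i_1))=\{\emptyset\}$; by the stated convention $\pos(\bar{t}\rest[n,n))=\{\emptyset\}$ this is legitimate, and the quantifier ordering $i_0=n\leq i_1<i_2\leq i_3$ is satisfied by $n=n<m=m$. There is genuinely no obstacle here — the very weak version is, by design, a weakening of the weak version with extra "slack" intervals $[i_0,i_1)$ and $[i_2,i_3)$ on either side, and the trivial choice of empty slack reduces it back. I would write this as a two or three sentence proof.

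\begin{proof}
Assume $(K,\Sigma)$ has the weak \ttt--bigness for $\bar{t}$, and fix
$n,L,M<\omega$ together with a partition $\cF_0\cup\ldots\cup\cF_L=
\post(\bar{t}\upl n)$. By $(\circledast)^{\bar{t}}_{\ttt}$ (used with
$n_1=L+1$ and $n_2=M$) there are $\ell\leq L$ and $s\in\Sigt(\bar{t}\upl n)$
such that $\nor[s]\geq M$ and $\val[s]\subseteq\cF_\ell$. Pick $m>n$ with
$s\in\Sigma(t_n,\ldots,t_{m-1})$, and set $i_0=i_1=n$, $i_2=i_3=m$,
$g_0=g_2=\emptyset$. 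Then $g_0\in\pos(\bar{t}\rest [i_0,i_1))=\{\emptyset\}$,
$g_2\in\pos(\bar{t}\rest [i_2,i_3))=\{\emptyset\}$, $s\in\Sigma(t_{i_1},
\ldots,t_{i_2-1})$, $\nor[s]\geq M$, and for every $g_1\in\val[s]$ we have
$g_0\cup g_1\cup g_2=g_1\in\cF_\ell$. This verifies
$(\boxtimes)^{\rm vw}_{\bar{t}}$, so $(K,\Sigma)$ has the very weak
\ttt--bigness for $\bar{t}$.
\end{proof}
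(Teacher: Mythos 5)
Your proof is correct and is exactly the argument the paper has in mind — the paper states this as an Observation without proof, treating it as the immediate specialization you describe (empty slack intervals $[i_0,i_1)$ and $[i_2,i_3)$, justified by the convention $\pos(\bar{t}\rest[n,n))=\{\emptyset\}$). Nothing to add.
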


\begin{definition}
\label{newsufstar}
Let $(K,\Sigma)$ be a tight FFCC pair for $\bH$ and $\bar{t}\in\PCt(K,
\Sigma)$. 
\begin{enumerate}
\item For $n<\omega$, $\cC^n_{\bar{t}}$ is the family of all sets
  $B\subseteq \post(\bar{t}\upl n)$ such that for some $M$ we have:\\
if $i_0=n\leq i_1<i_2\leq i_3$, $g_0\in\pos(\bar{t}\rest [i_0,i_1))$,
$g_2\in \pos(\bar{t}\rest [i_2,i_3))$ and $s\in\Sigma(t_{i_1},\ldots,
t_{i_2-1})$, $\nor[s]\geq M$, then $B\cap \{g_0\cup g_1\cup g_2:g_1\in
\val[s]\}\neq\emptyset$.  
\item $\sufp$ is the family of all $\bar{D}=\langle D_n:n<\omega\rangle \in
  \suft$ such that $\cC^n_{\bar{t}}\subseteq D_n$ for all $n<\omega$. 
\end{enumerate}
\end{definition}

\begin{proposition}
\label{newn19}
Let $(K,\Sigma)$ be a tight FFCC pair for $\bH$ and
$\bar{t}\in\PCt(K,\Sigma)$.
\begin{enumerate}
\item $\sufp$ is a closed subset of $\suft$, $\sufs\subseteq\sufp$.
\item If $(K,\Sigma)$ has the very weak \ttt--bigness for $\bar{t}$, then
  $\sufp\neq\emptyset$. 
\item If $\bar{D}\in\sufp$, $n<\omega$ and $B\in\cC^n_{\bar{t}}$, then
  $\set^n_{\bar{t}}(B,\bar{D})=\post(\bar{t}\upl n)$. 
\item $\sufp$ is closed under the operation $\circledast$ (defined in
  \ref{sufs}(6)).  
\end{enumerate}
\end{proposition}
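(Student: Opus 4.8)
The statement is the ``very weak bigness'' analogue of Proposition \ref{n19}, and the plan is to follow that proof almost verbatim, substituting $\cC^n_{\bar{t}}$, $\sufp$ and the very weak \ttt--bigness for $\cB^n_{\bar{t}}$, $\sufs$ and the weak \ttt--bigness, and invoking \ref{correctop} for the ultrafilter algebra on $\suft$. The only genuinely new ingredient is a combinatorial absorption lemma underlying part (3); once that is in place, (3) and (4) are short, and (1)--(2) are routine.

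For (1), closedness I would prove exactly as in \ref{n19}(1): if $\bar{D}\in\suft\setminus\sufp$, choose $n$ and $B\in\cC^n_{\bar{t}}$ with $B\notin D_n$, and note that the basic open set ${\rm Nb}_{\bar{A}}$ with $A_n=\post(\bar{t}\upl n)\setminus B$ and $A_\ell=\post(\bar{t}\upl \ell)$ for $\ell<n$ is a neighbourhood of $\bar{D}$ disjoint from $\sufp$. The inclusion $\sufs\subseteq\sufp$ reduces to the pointwise inclusion $\cC^n_{\bar{t}}\subseteq\cB^n_{\bar{t}}$, which I would get by collapsing the outer blocks in the defining clause of $\cC^n_{\bar{t}}$: taking $i_1=i_0=n$ (so $g_0=\emptyset$) and $i_3=i_2$ (so $g_2=\emptyset$), and noting that every $s\in\Sigt(\bar{t}\upl n)$ lies in some $\Sigma(t_n,\ldots,t_{i_2-1})$, the same $M$ witnesses $B\in\cB^n_{\bar{t}}$. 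For (2), as in \ref{n19}(2) it suffices to check that each $\cC^n_{\bar{t}}$ has the finite intersection property: given $B_0,\ldots,B_{m-1}\in\cC^n_{\bar{t}}$ with witnesses $M_0,\ldots,M_{m-1}$, I would partition $\post(\bar{t}\upl n)$ into the pieces $\cF_\eta=\{f:(\forall\ell<m)(f\in B_\ell\Leftrightarrow\eta(\ell)=1)\}$ for $\eta\in{}^m2$ and apply $(\boxtimes)^{\rm vw}_{\bar{t}}$ with $M=\max_{\ell<m}M_\ell$; the resulting homogeneous block $\{g_0\cup g_1\cup g_2:g_1\in\val[s]\}$ meets every $B_\ell$ by the witness property, forcing $\eta\equiv 1$, and hence $\emptyset\ne\cF_\eta\subseteq\bigcap_{\ell<m}B_\ell$. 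Non-principality of the resulting ultrafilters is argued as in \ref{n19}(2).

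The crux is (3), which I would factor through the claim that $f\circledast B\in\cC^{x_f}_{\bar{t}}$ whenever $B\in\cC^n_{\bar{t}}$ and $f\in\post(\bar{t}\upl n)$. To prove the claim I would show that the very same $M$ witnessing $B\in\cC^n_{\bar{t}}$ also witnesses $f\circledast B\in\cC^{x_f}_{\bar{t}}$. Given a configuration $x_f=i_0\le i_1<i_2\le i_3$, $g_0'\in\pos(\bar{t}\rest[i_0,i_1))$, $g_2'\in\pos(\bar{t}\rest[i_2,i_3))$, $s\in\Sigma(t_{i_1},\ldots,t_{i_2-1})$ with $\nor[s]\ge M$, the idea is to absorb $f$ into the left cap: by the definition of $x_f$ (\ref{sufs}(1)) and the remark in \ref{weaakbig}(1) we have $f\in\pos(\bar{t}\rest[n,x_f))$ and hence $f\cup g_0'\in\pos(\bar{t}\rest[n,i_1))$; since $n<x_f\le i_1$, the witness property of $B$ applies with $i_0$ replaced by $n$ and $g_0$ by $f\cup g_0'$, yielding $g_1\in\val[s]$ with $f\cup g_0'\cup g_1\cup g_2'\in B$. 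Iterating \ref{obsforassoc}(1) shows $g_0'\cup g_1\cup g_2'\in\post(\bar{t}\upl x_f)$, so this says $g_0'\cup g_1\cup g_2'\in f\circledast B$, which is exactly the required nonemptiness. With the claim established, (3) is immediate: for $\bar{D}\in\sufp$ and any $f\in\post(\bar{t}\upl n)$ the claim gives $f\circledast B\in\cC^{x_f}_{\bar{t}}\subseteq D_{x_f}$, i.e.\ $f\in\set^n_{\bar{t}}(B,\bar{D})$, so $\set^n_{\bar{t}}(B,\bar{D})=\post(\bar{t}\upl n)$.

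Part (4) then drops out: by \ref{correctop}(1) we have $\bar{D}^1\circledast\bar{D}^2\in\suft$, and for $B\in\cC^n_{\bar{t}}$ part (3) applied to $\bar{D}^1\in\sufp$ gives $\set^n_{\bar{t}}(B,\bar{D}^1)=\post(\bar{t}\upl n)\in D^2_n$, so $B\in(\bar{D}^1\circledast\bar{D}^2)_n$ by the definition of $\circledast$ (\ref{sufs}(6)); thus $\cC^n_{\bar{t}}\subseteq(\bar{D}^1\circledast\bar{D}^2)_n$ for every $n$. The step I expect to need the most care is the absorption lemma in (3) --- keeping the four block indices $i_0\le i_1<i_2\le i_3$ and the three-part functions correctly aligned when $f$ is merged into $g_0'$ --- together with the standard-but-needed verification (as in \ref{n19}(2)) that the finite-intersection family $\cC^n_{\bar{t}}$ extends to a \emph{non-principal} ultrafilter on $\post(\bar{t}\upl n)$.
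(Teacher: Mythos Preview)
Your proposal is correct and follows essentially the same approach as the paper's proof: closedness and fip are handled by direct analogy with \ref{n19}, and the heart of (3) is exactly the absorption claim $f\circledast B\in\cC^{x_f}_{\bar{t}}$ witnessed by the same $M$, proved by merging $f$ into the left cap $g_0$; part (4) then follows from (3) precisely as you describe. If anything, you are slightly more explicit than the paper in noting that $g_0'\cup g_1\cup g_2'\in\post(\bar{t}\upl x_f)$ is needed to place it in $f\circledast B$.
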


\begin{proof}
(1)\quad Since in \ref{newsufstar}(1) we allow $i_1=i_0$ and $i_3=i_2$
(so $g_0=g_2=\emptyset$), we easily see that $\cC^n_{\bar{t}}\subseteq
\cB^n_{\bar{t}}$. Hence $\sufs\subseteq\sufp$. The proof that $\sufp$ is
closed is the same as for \ref{n19}(1).

\noindent (2)\quad Like \ref{n19}(2).

\noindent (3)\quad Let $M$ be such that $B\cap \{g_0\cup g_1\cup g_2: g_1\in
\val[s]\}\neq\emptyset$ whenever $g_0\in\pos(\bar{t}\rest [n,i_1))$,
$g_2\in\pos(\bar{t}\rest [i_2,i_3))$, $s\in\Sigma(\bar{t}\rest [i_1,i_2))$,
$\nor[s]\geq M$, $n\leq i_1<i_2\leq i_3$. We will show that this $M$
witnesses $f\circledast B\in \cC^{x_f}_{\bar{t}}$ for all $f\in\post(t\upl
n)$. 

So suppose that $f\in\post(\bar{t}\upl n)$ and $x_f\leq i_1<i_2\leq i_3$,
$g_0\in\pos(\bar{t}\rest [x_f,i_1))$, $s\in\Sigma(\bar{t}\rest [i_1,i_2))$,
$\nor[s]\geq M$ and $g_2\in\pos(\bar{t}\rest [i_2,i_3))$. Then $f\cup
g_0\in\pos(\bar{t}\rest [n,i_1))$ (remember \ref{obsforassoc}(1)) and
consequently (by the choice of $M$) $B\cap\big\{(f\cup g_0)\cup g_1\cup
g_2:g_1\in\val[s]\big\}\neq \emptyset$. Let $g^*_1\in\val[s]$ be such that
$f\cup g_0\cup g^*_1\cup g_2\in B$. Then $g_0\cup g^*_1\cup g_2\in
f\circledast B$ witnessing that $(f\circledast B)\cap\big\{g_0\cup g_1\cup
g_2:g_1\in\val[s]\big\}\neq \emptyset$.  

Since $\cC^{x_f}_{\bar{t}}\subseteq D_{x_f}$ we conclude now that
$f\circledast B\in D_{x_f}$ so $f\in\set^n_{\bar{t}}(B,\bar{D})$. 

\noindent (4)\quad Suppose $\bar{D}_1,\bar{D}_2\in\sufp$, $\bar{D}=\bar{D}_1
\circledast \bar{D}_2$. Let $B\in\cC^n_{\bar{t}}$, $n<\omega$. By (3) we
know that $\set^n_{\bar{t}}(B,\bar{D}_1)=\post(\bar{t}\upl n)\in D^2_n$ and
thus $B\in D_n$. Consequently, $\cC^n_{\bar{t}}\subseteq D_n$ for all
$n<\omega$, so $\bar{D}\in\sufp$. 
\end{proof}

\begin{corollary}
\label{fancy2.8}
Assume that $(K,\Sigma)$ is a tight FFCC pair with the very weak
\ttt--bigness for $\bar{t}\in\PCt(K,\Sigma)$. Then there is
$\bar{D}\in\sufp$ such that $\bar{D}\circledast\bar{D}=\bar{D}$.
\end{corollary}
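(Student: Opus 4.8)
The plan is to apply the abstract idempotent-existence lemma \ref{z4} to the semigroup $(\sufp,\circledast)$, exactly as Corollary \ref{new2.8} did for $(\sufs,\circledast)$. For this I need to verify the three hypotheses of \ref{z4}: that $\sufp$ is a non-empty compact Hausdorff space, and that $\circledast$ restricted to it is an associative, right-continuous binary operation on $\sufp$.

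First I would recall that $\suft$ is a compact Hausdorff space by Observation \ref{obsforassoc}(3), and that $\circledast$ on $\suft$ is associative and right continuous by Proposition \ref{correctop}(2,3). Next, Proposition \ref{newn19}(1) tells us that $\sufp$ is a closed subset of $\suft$, hence itself compact Hausdorff; Proposition \ref{newn19}(2), together with the standing assumption that $(K,\Sigma)$ has the very weak \ttt--bigness for $\bar t$, gives $\sufp\neq\emptyset$; and Proposition \ref{newn19}(4) shows $\sufp$ is closed under $\circledast$, so $\circledast$ is a genuine binary operation on $\sufp$. Associativity and right continuity of $\circledast$ on $\sufp$ are inherited from $\suft$ (a closed subspace carries the subspace topology, so continuity of a map into $\suft$ whose image lands in $\sufp$ remains continuous as a map into $\sufp$). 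Thus all hypotheses of Lemma \ref{z4} hold for $X=\sufp$ and $\odot=\circledast$, and we obtain an idempotent $\bar D\in\sufp$, i.e. $\bar D\circledast\bar D=\bar D$.

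Concretely, the proof is a one-line citation: \emph{By \ref{z4} applied to $\sufp$ with the operation $\circledast$, using \ref{obsforassoc}(3), \ref{correctop}(2,3) and \ref{newn19}, there is $\bar D\in\sufp$ with $\bar D\circledast\bar D=\bar D$.} This mirrors verbatim the proof of Corollary \ref{new2.8}, with \ref{newn19} replacing \ref{n19}.

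There is essentially no obstacle here: all the real work — checking non-emptiness (which uses very weak bigness, a strictly weaker hypothesis than the weak bigness needed in \ref{new2.8}), checking closedness, and checking closure under $\circledast$ (which is the genuinely new ingredient, proved via part (3) of \ref{newn19} showing $\set^n_{\bar t}(B,\bar D)$ is everything when $B\in\cC^n_{\bar t}$) — has already been done in Proposition \ref{newn19}. The only thing worth a moment's care is making sure one does not need any additivity assumption: indeed, unlike \ref{n19}(3) which required weak \ttt--additivity to get closure of $\sufs$ under $\circledast$, the closure of $\sufp$ under $\circledast$ in \ref{newn19}(4) is unconditional, so no extra hypothesis on $(K,\Sigma)$ beyond very weak \ttt--bigness is needed.
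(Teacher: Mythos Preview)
Your proposal is correct and follows exactly the intended route: the paper gives no explicit proof for this corollary, but it is meant to be read as an immediate consequence of \ref{z4} together with \ref{obsforassoc}(3), \ref{correctop}(2,3) and \ref{newn19}, mirroring the proof of \ref{new2.8} with \ref{newn19} in place of \ref{n19}. Your observation that no additivity hypothesis is needed here (because \ref{newn19}(4) is unconditional) is the right point to highlight.
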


\begin{theorem}
\label{fancy2.9}
Assume that $(K,\Sigma)$ is a tight FFCC pair for $\bH$, $\bar{t}=\langle
t_n:n<\omega\rangle\in\PCt(K,\Sigma)$. Let $\bar{D}\in\sufp$ be such that
$\bar{D}\circledast\bar{D}=\bar{D}$ and suppose that $A_n\in D_n$ for
$n<\omega$. Then there are sequences $\langle n_i:i<\omega\rangle$, $\langle
g_{3i},g_{3i+2}:i<\omega\rangle$ and $\langle s_{3i+1}:i<\omega\rangle$ such
that for every $i<\omega$:
\begin{enumerate}
\item[$(\alpha)$] $0=n_0\leq n_{3i}\leq n_{3i+1}<n_{3i+2}\leq
  n_{3i+3}<\omega$, 
\item[$(\beta)$] if $j=3i$ or $j=3i+2$, then $g_j\in\pos(\bar{t}\rest
  [n_j,\ldots,n_{j+1}))$, 
\item[$(\gamma)$] if $j=3i+1$, then $s_j\in\Sigma(t_{n_j}, \ldots,
  t_{n_{j+1}-1})$ and $\nor[s_j]\geq j$,  
\item[$(\delta)$] if $g_{3\ell+1}\in\val[s_{3\ell+1}]$ for $\ell\in [i,k)$,
  $i<k$, then $\bigcup\limits_{j=3i}^{3k-1} g_j\in A_{n_{3i}}$.
\end{enumerate}
\end{theorem}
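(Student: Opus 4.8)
\textbf{Proof proposal for Theorem \ref{fancy2.9}.}
The plan is to mimic the construction in the proof of Theorem \ref{new2.9}, but now building the candidate in ``triples'' (a left piece $g_{3i}$, a creature $s_{3i+1}$, a right piece $g_{3i+2}$) instead of single creatures, so that the very weak \ttt--bigness can be applied at each step. The crucial tool is Proposition \ref{newn19}(3): since $\bar{D}\in\sufp$ and $\bar{D}\circledast\bar{D}=\bar{D}$, for any $B\in D_k$ we have $B\cap\set^k_{\bar{t}}(B,\bar{D})\in D_k$, and moreover every $B\in\cC^k_{\bar{t}}$ satisfies $\set^k_{\bar{t}}(B,\bar{D})=\post(\bar{t}\upl k)$. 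So first I would prove the analogue of Claim \ref{cl6}: if $M,k<\omega$ and $B\in D_k$, then there exist $k\leq i_1<i_2\leq i_3$, $g_0\in\pos(\bar{t}\rest[k,i_1))$, $g_2\in\pos(\bar{t}\rest[i_2,i_3))$ and $s\in\Sigma(t_{i_1},\ldots,t_{i_2-1})$ with $\nor[s]\geq M$ such that for all $g_1\in\val[s]$ we have $g_0\cup g_1\cup g_2\in B$ and $(g_0\cup g_1\cup g_2)\circledast B\in D_{x}$ where $x$ is the relevant upper index. This follows by applying the very weak \ttt--bigness to the partition of $\post(\bar{t}\upl k)$ into $B\cap\set^k_{\bar{t}}(B,\bar{D})$ and its complement: the complement is not in $\cC^k_{\bar{t}}$ (since $\cB^k_{\bar{t}}\subseteq D_k$ and $B\cap\set^k_{\bar t}(B,\bar D)\in D_k$, its complement is not in $\cC^k_{\bar t}$), hence the homogeneous piece must land inside $B\cap\set^k_{\bar{t}}(B,\bar{D})$.

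Second, I would run the induction: starting with $n_0=0$, $B_0=A_0$, $k_0=0$, at stage $i$ I have an index $n_{3i}$ and a set $B_i\in D_{n_{3i}}$ with $B_i\subseteq A_{n_{3i}}$; I apply the claim above with $M=3i+1$ (or $i+1$, whatever makes $(\gamma)$ work) and $k=n_{3i}$ to produce $n_{3i+1}<n_{3i+2}\leq n_{3i+3}$, the pieces $g_{3i}, g_{3i+2}$, and $s_{3i+1}$ with $\nor[s_{3i+1}]\geq 3i+1$, arranging that for every $g_{3i+1}\in\val[s_{3i+1}]$ the concatenation $g_{3i}\cup g_{3i+1}\cup g_{3i+2}$ lies in $B_i$ and $(g_{3i}\cup g_{3i+1}\cup g_{3i+2})\circledast B_i\in D_{n_{3i+3}}$. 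Then I set
\[B_{i+1}=A_{n_{3i+3}}\cap\bigcap\{(g_{3i}\cup g_{3i+1}\cup g_{3i+2})\circledast B_i:g_{3i+1}\in\val[s_{3i+1}]\},\]
which is in $D_{n_{3i+3}}$ because $\val[s_{3i+1}]$ is finite and each term is in $D_{n_{3i+3}}$, and it is contained in $A_{n_{3i+3}}$; one also checks $(\alpha)$, $(\beta)$ from the choice of indices.

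Third, the conclusion $(\delta)$ is proved by downward induction on $k-i$, exactly paralleling Claim \ref{cl7}/Claim \ref{cl3}. For $k=i+1$ this is immediate from the fact that $g_{3i}\cup g_{3i+1}\cup g_{3i+2}\in B_i\subseteq A_{n_{3i}}$. For the inductive step, write $h=\bigcup_{j=3(i+1)}^{3k-1}g_j$; by the inductive hypothesis applied at $i+1$ we get $h\in A_{n_{3(i+1)}}\subseteq B_{i+1}$, hence $h\in(g_{3i}\cup g_{3i+1}\cup g_{3i+2})\circledast B_i$ by definition of $B_{i+1}$, which by definition of $\circledast$ means $(g_{3i}\cup g_{3i+1}\cup g_{3i+2})\cup h=\bigcup_{j=3i}^{3k-1}g_j\in B_i\subseteq A_{n_{3i}}$. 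Here I need the domains to fit together, i.e. $n_{3(i+1)}$ is the ``$x$'' value associated to $g_{3i}\cup g_{3i+1}\cup g_{3i+2}$ as an element of $\post(\bar{t}\upl n_{3i})$; this should be guaranteed by the way the claim is set up (the right endpoint of $g_{3i+2}$ is $n_{3i+3}=n_{3(i+1)}$, and by \ref{obsforassoc}(1) concatenations behave correctly with respect to the index function).

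\textbf{Main obstacle.} The delicate point is bookkeeping of the index function $x_{(\cdot)}$ and making sure, in the analogue of Claim \ref{cl6}, that when I apply \ref{newn19}(3) the set $B\cap\set^k_{\bar t}(B,\bar D)$ is genuinely not in $\cC^k_{\bar t}$ so that the very weak \ttt--bigness yields homogeneity \emph{inside} it rather than inside the complement --- this requires knowing $\cC^k_{\bar t}\subseteq D_k$ together with ultrafilter-ness of $D_k$, which is exactly what $\bar D\in\sufp$ and \ref{correctop}(1) give. A secondary subtlety is that the ``$A\in\cC^m_{\bar t}$'' style argument (the bracketed computation in the proof of \ref{cl5}) must be adapted so that the left/right flanking pieces $g_0,g_2$ of the outer application combine correctly with those produced inside; but since $\circledast$ only trims from the left and the very weak bigness allows arbitrary finite flanks on both sides, this goes through with the same additivity-free reasoning used in \ref{newn19}(3).
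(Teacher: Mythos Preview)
Your approach is exactly the one the paper intends (the paper's own proof is a single sentence referring back to Theorem~\ref{new2.9}), and your analogue of Claim~\ref{cl6} is correct: since $B\cap\set^k_{\bar t}(B,\bar D)\in D_k$ and $\cC^k_{\bar t}\subseteq D_k$ (this is what $\bar D\in\sufp$ means; your parenthetical ``$\cB^k_{\bar t}\subseteq D_k$'' should read $\cC^k_{\bar t}$), the complement is not in $\cC^k_{\bar t}$, so the definition of $\cC^k_{\bar t}$ directly produces the desired $g_0,s,g_2$. Note that you do not need to invoke ``very weak \ttt--bigness'' as a hypothesis here---it is not assumed in the theorem---you are simply using the negation of membership in $\cC^k_{\bar t}$.

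There is one genuine slip in your proof of $(\delta)$. You write ``by the inductive hypothesis \dots\ we get $h\in A_{n_{3(i+1)}}\subseteq B_{i+1}$'', but the inclusion goes the other way: $B_{i+1}\subseteq A_{n_{3(i+1)}}$ by your own definition of $B_{i+1}$. So from $h\in A_{n_{3(i+1)}}$ you cannot conclude $h\in B_{i+1}$, and the argument breaks. The fix is precisely what Claim~\ref{cl7} does: prove by induction the \emph{stronger} statement that $\bigcup_{j=3i}^{3k-1}g_j\in B_i$ (not merely $\in A_{n_{3i}}$). Then the inductive hypothesis at $i+1$ gives $h\in B_{i+1}$ directly, hence $h\in (g_{3i}\cup g_{3i+1}\cup g_{3i+2})\circledast B_i$, and therefore the full union lies in $B_i\subseteq A_{n_{3i}}$. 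With this correction the proof goes through.
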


\begin{proof}
Parallel to \ref{new2.9}, just instead of $\val[s_i]$ use
$\{g_{i-1}\cup g\cup g_{i+1}:g\in\val[s_i]\}$. 
\end{proof}

\begin{conclusion}
\label{fancy2.10}
Assume that $(K,\Sigma)$ is a tight FFCC pair with the very weak
\ttt--bigness for $\bar{t}\in\PCt(K,\Sigma)$. Suppose that for each
$n<\omega$ we are given $k_n<\omega$ and a mapping $d_n:\post(\bar{t}\upl
n)\longrightarrow k_n$. Then there are sequences $\langle n_i:i<\omega
\rangle$, $\langle g_{3i},g_{3i+2}: i<\omega\rangle$, $\langle s_{3i+1}: i<
\omega\rangle$ and $\langle c_i:i<\omega\rangle$ such that for each
$i<\omega$:  
\begin{enumerate}
\item[$(\alpha)$] $0=n_0\leq n_{3i}\leq n_{3i+1}<n_{3i+2}\leq
  n_{3i+3}<\omega$, $c_i\in k_{n_{3i}}$, 
\item[$(\beta)$] if $j=3i$ or $j=3i+2$, then $g_j\in\pos(\bar{t}\rest
  [n_j,\ldots,n_{j+1}))$, 
\item[$(\gamma)$] if $j=3i+1$, then $s_j\in\Sigma(t_{n_j}, \ldots,
  t_{n_{j+1}-1})$ and $\nor[s_j]\geq j$,  
\item[$(\delta)$] if $i<k$ and $f\in\pos(\bar{t}\rest [n_{3i},n_{3k}))$ are 
  such that 
\[g_{3\ell}\cup g_{3\ell+2}\subseteq f\quad\mbox{ and }\quad f\rest
[m^{s_{3\ell+1}}_\dn, m^{s_{3\ell+1}}_\up)\in\val[s_{3\ell+1}]\quad \mbox{
  for all }\ell\in [i,k),\]
then $d_{n_{3i}}(f)=c_i$.
\end{enumerate}
\end{conclusion}

\begin{example}
\label{re388}
Let $(G,\circ)$ be a finite group. For a function $f:S\longrightarrow G$ and
$a\in G$ we define $a\circ f:S\longrightarrow G$ by $(a\circ f)(x)= a\circ
f(x)$ for $x\in S$. Let $\bH_G(m)=G$ (for $m<\omega$) and let $K_G$ consist
of all FP creatures $t$ for $\bH_G$ such that 
\begin{itemize}
\item $\nor[t]=m^t_\up$, $\dis[t]=\emptyset$, 
\item $\val[t]\subseteq {}^{[m^t_\dn,m^t_\up)}G$ is such that $(\forall
  f\in\val[t])(\forall a\in G)(a\circ f\in\val[t])$. 
\end{itemize}
For $t_0,\ldots,t_n\in K_G$ with $m^{t_{\ell+1}}_\dn=m^{t_\ell}_\up$ (for
$\ell<n$) we let $\Sigma_G(t_0,\ldots,t_n)$ consist of all creatures $t\in
K_G$ such that
\begin{itemize}
\item $m^t_\dn=m^{t_0}_\dn$, $m^t_\up=m^{t_n}_\up$,
\item $\val[t]\subseteq \{f\in {}^{[m^t_\dn,m^t_\up)}G: (\forall \ell\leq
  n)(f\rest [m^{t_\ell}_\dn,m^{t_\ell}_\up)\in\val[t_\ell])\}$. 
\end{itemize}
Then
\begin{enumerate}
\item $(K_G,\Sigma_G)$ is a tight FFCC pair for $H_G$.
\item If $|G|=2$, then $(K_G,\Sigma_G)$ has the very weak \ttt--bigness for
  every candidate $\bar{t}\in\PCt(K_G,\Sigma_G)$. 
\end{enumerate}
\end{example}

\begin{proof}
(1)\quad Straightforward.

\noindent (2)\quad Let $G=(\{-1,1\},\cdot)$. Suppose that
$\bar{t}\in\PCt(K_G,\Sigma_G)$ and $\post(\bar{t}\upl n)=\cF_0\cup \ldots
\cup \cF_L$, $n,L,M<\omega$. For future use we will show slightly more than
needed for the very weak bigness.

We say that $N\geq n+M$ is $\ell$--good (for $\ell\leq L$) if 
\begin{enumerate}
\item[$(\boxdot)_\ell$] there are $j_2\geq j_1>N$, $g_0\in\pos(\bar{t}\rest
  [n,N))$, $g_2\in\pos(\bar{t}\rest [j_1,j_2))$ and $s\in\Sigma_G(\bar{t}
  \rest [N,j_1))$ such that $\{g_0\cup g_1\cup g_2:g_1\in\val[s]\}\subseteq
  \cF_\ell$. 
\end{enumerate}
(Note that if $s$ is as in $(\boxdot)_\ell$, then also
$\nor[s]=m^{t_{j_1}}_\dn \geq j_1>N\geq M$.) We are going to argue that 
\begin{enumerate}
\item[$(\odot)$] almost every $N\geq n+M$ is $\ell$--good for some $\ell\leq
  L$.  
\end{enumerate}
So suppose that $(\odot)$ fails and we have an increasing sequence
$n+M<N(0)<N(1)<N(2)<\ldots$ such that $N(k)$ is not $\ell$--good for any
$\ell\leq L$ (for all $k<\omega$). Let $m=L+957$ and for each $i\in
[n,N(m)]$ fix $f_i\in\val[t_i]$ (note that then $-f_i\in\val[t_i]$ as
well). Next, for $j<m$ define
\[h_j=\bigcup_{i=n}^{N(j)-1} f_i\cup\bigcup_{i=N(j)}^{N(m)} -f_i\]
and note that $h_j\in\post(\bar{t}\upl n)$. For some $\ell\leq L$ and
$j<k<m$ we have $h_j,h_k\in\cF_\ell$. Set 
\[\begin{array}{l}
\displaystyle g_0=\bigcup_{i=n}^{N(j)-1}f_i=h_j\rest m^{t_{N(j)}}_\dn 
=h_k\rest m^{t_{N(j)}}_\dn,\\
\displaystyle g_2=\bigcup_{i=N(k)}^{N(m)} -f_i=h_j\rest [m^{t_{N(k)}}_\dn,
m^{t_{N(m)}}_\up) =h_k\rest [m^{t_{N(k)}}_\dn,m^{t_{N(m)}}_\up)
\end{array}\]
and let $s\in\Sigma_G(\bar{t}\rest [N(j),N(k)))$ be such that 
\[\val[s]=\{h_j\rest [m^{t_{N(j)}}_\dn,m^{t_{N(k)}}_\dn), h_k\rest [
m^{t_{N(j)}}_\dn, m^{t_{N(k)}}_\dn)\}.\]
Then $\{g_0\cup g_1\cup g_2: g_1\in\val[s]\}= \{h_j,h_k\}\subseteq
\cF_\ell$, so $g_0,g_2$ and $s$ witness $(\boxdot)_\ell$ for $N(j)$, a
contradiction.   
\end{proof}

The following conclusion is a special case of the partition theorem used in
Goldstern and Shelah \cite{GoSh:388} to show that a certain forcing notion
preserves a Ramsey ultrafilter (see \cite[3.9, 4.1 and Section
5]{GoSh:388}). 

\begin{corollary}
\label{case388}
Let $\bbY=\bigcup\limits_{n<\omega}{}^n\{-1,1\}$. Suppose that $\bbY=C_0
\cup\ldots\cup C_L$, $L<\omega$. Then there are a sequence $\langle
n_i:i<\omega\rangle$, a function $f:\omega\longrightarrow \{-1,1\}$ and
$\ell<L$ such that 
\begin{enumerate}
\item[(a)] $0=n_0\leq n_{3i}\leq n_{3i+1}<n_{3i+2}\leq n_{3i+3}<\omega$,
\item[(b)] if $g:n_{3i}\longrightarrow \{-1,1\}$ for each $j<i$ satisfies 
\[\begin{array}{l}
g\rest [n_{3j},n_{3j+1})\cup g\rest [n_{3j+2},n_{3j+3})\subseteq f\quad
\mbox{ and}\\
g\rest [n_{3j+1},n_{3j+2})\in \{f\rest [n_{3j+1},n_{3j+2}), -f \rest
[n_{3j+1},n_{3j+2})\}
\end{array}\]  
then $g\in C_\ell$.
\end{enumerate}
\end{corollary}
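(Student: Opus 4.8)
The plan is to derive Corollary \ref{case388} as a direct application of Conclusion \ref{fancy2.10} to the tight FFCC pair $(K_G,\Sigma_G)$ of Example \ref{re388} with $G=(\{-1,1\},\cdot)$. By Example \ref{re388}(2) this pair has the very weak \ttt--bigness for every tight pure candidate, so the hypotheses of \ref{fancy2.10} are met once we fix a suitable candidate and colouring. First I would pick any $\bar{t}=\langle t_n:n<\omega\rangle\in\PCt(K_G,\Sigma_G)$ with $m^{t_0}_\dn=0$ and $m^{t_n}_\up=m^{t_{n+1}}_\dn=n+1$, so that $\val[t_n]={}^{\{n\}}\{-1,1\}$ (both constant maps on the singleton $\{n\}$) and consequently $\post(\bar{t}\upl n)=\bigcup_{m>n}{}^{[n,m)}\{-1,1\}$; this identifies $\post(\bar{t})$ (the case $n=0$) with $\bbY\setminus\{\emptyset\}=\bigcup_{0<m<\omega}{}^m\{-1,1\}$. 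Then for each $n<\omega$ define $d_n:\post(\bar{t}\upl n)\longrightarrow L+1$ by $d_n(f)=\min\{\ell\leq L:f\in C_\ell\}$ (using the partition $\bbY=C_0\cup\ldots\cup C_L$; the empty function can be absorbed into any $C_\ell$ or handled by noting $n_{3i}<n_{3k}$ forces the relevant $f$ nonempty).

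Next I would invoke \ref{fancy2.10} to obtain sequences $\langle n_i:i<\omega\rangle$, $\langle g_{3i},g_{3i+2}:i<\omega\rangle$, $\langle s_{3i+1}:i<\omega\rangle$ and $\langle c_i:i<\omega\rangle$ satisfying $(\alpha)$–$(\delta)$ there. The sequence $\langle n_i\rangle$ is exactly the one demanded in \ref{case388}(a). To extract the function $f:\omega\longrightarrow\{-1,1\}$ and the single colour $\ell$, I would first observe that the colours $c_i$ stabilise: since $c_i\in\{0,\ldots,L\}$ and (by the homogeneity in $(\delta)$ together with the fact that any tail configuration extends) the $c_i$ must eventually be constant — more precisely, any $f$ witnessing $(\delta)$ for the pair $(i,k)$ also witnesses it for $(i',k)$ with $i<i'<k$ after restriction, forcing $c_i=c_{i'}$ — one gets a fixed $\ell\leq L$ with $c_i=\ell$ for all $i$ (or, after passing to a tail of the sequences and relabelling, for all $i$). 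Then I would define $f$ on $\omega$ by letting $f\rest [n_{3i},n_{3i+1})=g_{3i}$, $f\rest[n_{3i+2},n_{3i+3})=g_{3i+2}$, and $f\rest[n_{3i+1},n_{3i+2})$ be any fixed member of $\val[s_{3i+1}]$ (say the one obtained by choosing a representative); since $\val[s_{3i+1}]$ is closed under multiplication by $-1\in G$ and $s_{3i+1}\in\Sigma_G(t_{n_{3i+1}},\ldots,t_{n_{3i+2}-1})$, both $f\rest[n_{3i+1},n_{3i+2})$ and its negation lie in $\val[s_{3i+1}]$.

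Finally I would verify condition \ref{case388}(b): given $i<\omega$ and $g:n_{3i}\longrightarrow\{-1,1\}$ satisfying the two displayed constraints for all $j<i$, I unwind the definitions. The constraint $g\rest[n_{3j},n_{3j+1})\cup g\rest[n_{3j+2},n_{3j+3})\subseteq f$ says $g$ agrees with $g_{3j}$ and $g_{3j+2}$ on the relevant blocks, and $g\rest[n_{3j+1},n_{3j+2})\in\{f\rest[n_{3j+1},n_{3j+2}),-f\rest[n_{3j+1},n_{3j+2})\}$ says the middle block of $g$ lies in $\val[s_{3j+1}]$ (using the $\pm1$-closure and that $\val[s_{3j+1}]$ has exactly the two elements $\pm(f\rest[n_{3j+1},n_{3j+2}))$ when $s_{3j+1}$ is chosen minimally — if $\val[s_{3j+1}]$ is larger this still holds as a one-sided inclusion, which is all $(\delta)$ needs). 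Thus $g\in\pos(\bar{t}\rest[0,n_{3i}))$ and $g\rest[m^{s_{3j+1}}_\dn,m^{s_{3j+1}}_\up)\in\val[s_{3j+1}]$ and $g_{3j}\cup g_{3j+2}\subseteq g$ for all $j<i$, so $(\delta)$ applied with the pair $(0,i)$ yields $d_{n_0}(g)=d_0(g)=c_0=\ell$, i.e.\ $g\in C_\ell$, as required.

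The main obstacle I anticipate is the bookkeeping around the empty function and, more seriously, the stabilisation of the colours $c_i$ to a single $\ell$: Conclusion \ref{fancy2.10} as stated allows the homogeneity colour to depend on the starting index $3i$, whereas \ref{case388}(b) demands one global $\ell$ working for every $i$. I expect the resolution is the monotonicity/consistency argument sketched above — any configuration demonstrating $(\delta)$ for $(i,k)$ restricts to one for $(i',k)$ — but making this precise (and checking it does not require extra hypotheses like multiadditivity, cf.\ the difference between \ref{new2.10}(a) and (b)) is the delicate point; if it genuinely fails one would instead pass to a subsequence of $\langle n_i\rangle$ along which $c_i$ is constant, which suffices since \ref{case388}(a) only constrains the shape $0=n_0\leq n_{3i}\leq n_{3i+1}<n_{3i+2}\leq n_{3i+3}$ and any subsequence of a sequence with that shape retains it.
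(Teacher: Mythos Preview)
Your approach is exactly the paper's (the proof there is literally ``By \ref{fancy2.10}+\ref{re388}''), and your final verification paragraph is correct, but the ``stabilisation'' detour is both unnecessary and flawed.

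Look again at \ref{case388}(b): the function $g$ there has domain $n_{3i}=[0,n_{3i})$, so it always starts at $0$. Hence when you invoke clause $(\delta)$ of \ref{fancy2.10} you always use the pair $(0,i)$ --- as you in fact do in your last paragraph --- and the colour returned is always $c_0$. Just set $\ell=c_0$; no stabilisation or subsequence argument is needed, and your stated ``main obstacle'' disappears.

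Two related remarks. First, your restriction argument for $c_i=c_{i'}$ does not work: $(\delta)$ for $(i,k)$ gives $d_{n_{3i}}(f)=c_i$ while $(\delta)$ for $(i',k)$ gives $d_{n_{3i'}}(f')=c_{i'}$ with $f'=f\rest[n_{3i'},n_{3k})$, and there is no reason the two colourings, applied to two different functions, should agree. Second, your definition $d_n(h)=\min\{\ell:h\in C_\ell\}$ only makes sense for $n=0$, since for $n>0$ a function $h\in\post(\bar{t}\upl n)$ has domain $[n,m)$ and is not an element of $\bbY$. Both points are moot once you realise only $d_0$ and $c_0$ are used: just define $d_n$ arbitrarily for $n>0$.
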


\begin{proof}
  By \ref{fancy2.10}+\ref{re388}.
\end{proof}

\section{Limsup candidates}

\begin{definition}
\label{limsupdef}
Let $(K,\Sigma)$ be a tight FFCC pair for $\bH$ and $J$ be an ideal on
$\omega$. 
\begin{enumerate}
\item {\em A ${\rm limsup}_J$--candidate\/} for $(K,\Sigma)$ is a sequence 
  $\bar{t}=\langle t_n:n<\omega\rangle$ such that $t_n\in K$, $m^{t_n}_\up=
  m^{t_{n+1}}_\dn$ (for all $n$) and for each $M$
\[\{m^{t_n}_\dn:n<\omega\ \&\ \nor[t_n]>M\}\in J^+.\]
The family of all ${\rm limsup}_J$--candidates for $(K,\Sigma)$ is denoted
by $\PCJ$. 
\item {\em A finite candidate\/} for $(K,\Sigma)$ is a finite sequence
  $\bar{s}=\langle s_n:n<N\rangle$, $N<\omega$, such that $s_n\in K$ and 
  $m^{s_n}_\up= m^{s_{n+1}}_\dn$ (for $n<N$). The family of all finite
  candidates is called $\FC$. 
\item For $\bar{s}=\langle s_n:n<N\rangle\in\FC$ and $M<\omega$ we set 
\[\base_M(\bar{s})=\{m^{s_n}_\dn:n<N\ \&\ \nor[s_n]\geq M\}.\]
\item Let $\bar{t},\bar{t}'\in\PCJ$, $\bar{s}\in\FC$. Then we define
  $\bar{t}\upl n$, $\Sigt(\bar{t})$, $\post(\bar{t})$, $\bar{t}\leq
  \bar{t}'$, $\pos(\bar{t}\rest [n,m))$ and $\pos(\bar{s})$ as in the case
  of tight pure candidates (cf.~\ref{g3}, \ref{weaakbig}).
\item Let $\bar{t}\in\PCJ$. The family of all finite candidates $\bar{s}=
  \langle s_n:n<N\rangle\in\FC$ satisfying
\[(\forall n<N)(\exists k,\ell)(s_n\in\Sigma(\bar{t}\rest [k,\ell)))\quad
\mbox{ and }\quad m^{s_0}_\dn=m^{t_0}_\dn\]
is denoted by $\Sigseq(\bar{t})$. 
\end{enumerate}
\end{definition}

\begin{definition}
\label{Jbig}
Let $(K,\Sigma)$ be a tight FFCC pair, $J$ be an ideal on $\omega$ and
$\bar{t}\in \PCJ$.
\begin{enumerate}
\item We say that $(K,\Sigma)$ {\em has the $J$--bigness for $\bar{t}$\/} if 
  \begin{enumerate}
  \item[$(\otimes)^J_{\bar{t}}$] for every $n,L,M<\omega$ and a partition
    $\cF_0\cup\ldots\cup\cF_L=\post(\bar{t}\upl n)$, there are $\ell\leq L$
    and a set $Z\in J^+$ such that  
\[(\forall z\in Z)(\exists \bar{s}\in\Sigseq(\bar{t}\upl n))(z\in
\base_M(\bar{s})\ \&\ \pos(\bar{s})\subseteq \cF_\ell).\]
  \end{enumerate}
\item The pair $(K,\Sigma)$ {\em captures singletons\/}
  (cf. \cite[2.1.10]{RoSh:470}) if  
\[(\forall t\in K)(\forall f\in \val[t])(\exists s\in\Sigma(t))(\val[s]=
\{f\}).\] 
\item We define $\suft$ as in \ref{sufs}(3), $\set^n_{\bar{t}}(A,\bar{D})$
  (for $A\subseteq \post(\bar{t}\upl n)$ and $\bar{D}\in \suft$)
  as in \ref{sufs}(5) and the operation $\circledast$ on $\suft$ as in
  \ref{sufs}(6).
\item For $n<\omega$, $\cD^{n,J}_{\bar{t}}$ is the family of all sets
  $B\subseteq\post(\bar{t}\upl n)$ such that for some $M<\omega$ and $Y\in
  J^c$ we have:

if $\bar{s}\in\Sigseq(\bar{t}\upl n)$ and $\base_M(\bar{s})\cap Y\neq
\emptyset$, then $B\cap\pos(\bar{s})\neq \emptyset$. 
\item $\sufJ$ is the family of all $\bar{D}=\langle D_n:n<\omega\rangle
  \in\suft$ such that $\cD^{n,J}_{\bar{t}}\subseteq D_n$ for all $n<\omega$.  
\end{enumerate}
\end{definition}

\begin{remark}
Note that no norms were used in the proofs of \ref{obsforassoc},
\ref{correctop}, so those statements are valid for the case of $\bar{t}\in
\PCJ$ too.   
\end{remark}

\begin{observation}
\label{getJbig}
\begin{enumerate}
\item Assume that $(K,\Sigma)$ is a tight FFCC pair with bigness (see
  \ref{defbig}(3)). If $(K,\Sigma)$ captures singletons or it has the
  $\ttt$--multiadditivity (see \ref{defadd}(3)), then $(K,\Sigma)$ has the
  $J$--bigness for any $\bar{t}\in \PCJ$.  
\item The tight FFCC pairs $(K_1,\Sigma_1^*)$, $(K_3,\Sigma_3^*)$ and
  $(K_N,\Sigma_N)$ defined in \ref{ex1.10}, \ref{ex1.12} and \ref{CarlSim},
  respectively, have $J$--bigness on every $\bar{t}\in\PCJ$.
\end{enumerate}
\end{observation}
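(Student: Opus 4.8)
The plan is to prove Observation \ref{getJbig} in two parts, reducing (2) to (1) via the examples already analyzed, and proving (1) directly from the definition of $J$--bigness.

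For part (1), fix a tight FFCC pair $(K,\Sigma)$ with bigness, let $\bar{t}=\langle t_n:n<\omega\rangle\in\PCJ$, and suppose we are given $n,L,M<\omega$ together with a partition $\cF_0\cup\ldots\cup\cF_L=\post(\bar{t}\upl n)$. We must produce $\ell\leq L$ and $Z\in J^+$ as in $(\otimes)^J_{\bar{t}}$. The key observation is that since $\bar{t}\in\PCJ$, the set $W=\{m^{t_k}_\dn:k<\omega\ \&\ \nor[t_k]>M+L\}$ lies in $J^+$. For each such index $k$, the creature $t_k$ has $\nor[t_k]>M+L$ and $\val[t_k]$ is partitioned by the $\cF_j$'s (more precisely, by $\cF_j\cap\val[t_k]$, and by clause \ref{g1}(4)(f$^{\rm tight}$) every $f\in\val[t_k]$ lies in $\post(\bar{t}\upl n)$ once $k\geq n$, so it is assigned some colour). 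Iterating \textbf{bigness} (Definition \ref{defbig}(3)) at most $L$ times we find $j_k\leq L$ and $s_k\in\Sigma(t_k)$ with $\nor[s_k]\geq\nor[t_k]-L>M$ and $\val[s_k]\subseteq\cF_{j_k}$. Now shrink: since $W\in J^+$ and $W=\bigcup_{j\leq L}\{m^{t_k}_\dn:k\geq n\ \&\ j_k=j\}$ is a finite union, property (ii) of an ideal gives some $\ell\leq L$ with $Z:=\{m^{t_k}_\dn:k\geq n\ \&\ j_k=\ell\}\in J^+$.

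It remains to check that this $Z$ works, and here the two hypotheses (capturing singletons, resp.\ $\ttt$--multiadditivity) come in. Given $z=m^{t_k}_\dn\in Z$, we have the single creature $s_k$ with $z=m^{s_k}_\dn$, $\nor[s_k]\geq M$ and $\val[s_k]\subseteq\cF_\ell$; but $\langle s_k\rangle$ alone need not be a finite candidate with the right base-point $m^{t_0}_\dn$, so we must extend it to the left by creatures over $t_0,\ldots,t_{k-1}$. Using \textbf{$\ttt$--multiadditivity} we glue $t_n,\ldots,t_{k-1}$ (or, if $n>0$, we simply prepend $t_0,\ldots,t_{k-1}$ as individual creatures) into a finite candidate $\bar{s}=\langle s_0',\ldots\rangle\conc\langle s_k\rangle\in\Sigseq(\bar{t}\upl n)$ with $m^{s_0'}_\dn=m^{t_0}_\dn$; the norms of the prepended creatures are irrelevant since $\base_M$ only records indices with norm $\geq M$, and $z\in\base_M(\bar{s})$ because $\nor[s_k]\geq M$. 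Finally $\pos(\bar{s})\subseteq\cF_\ell$: every $f\in\pos(\bar{s})$ restricts to $\val[s_k]\subseteq\cF_\ell$ on $[m^{s_k}_\dn,m^{s_k}_\up)$, and—this is where we genuinely use the structure of the examples, or where in the abstract statement we need the \emph{capturing singletons} alternative—when $(K,\Sigma)$ captures singletons we instead take $\val[s_k]$ to be a singleton $\{f\}$ with $f\in\cF_\ell$, build $\bar{s}$ whose only norm-$\geq M$ creature is this $s_k$, and get $\pos(\bar{s})=\{f\}\cup(\text{extensions})\subseteq\cF_\ell$ directly.

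\textbf{The main obstacle} I anticipate is exactly the last point: ensuring $\pos(\bar{s})\subseteq\cF_\ell$ rather than merely $\val[s_k]\subseteq\cF_\ell$. In the presence of $\ttt$--multiadditivity one must be careful that gluing $s_k$ with the prepended creatures does not enlarge the colour class—but since multiadditivity lets us keep $\val$ small and $\post(\bar{t}\upl n)$ is already coloured, one arranges that the only ``large-norm'' block is $s_k$ and the rest contribute low-norm creatures whose $\val$-sets can be taken to be singletons (using that $\val[t]\neq\emptyset$ and picking one function each), so every element of $\pos(\bar{s})$ agrees with a fixed extension of some element of $\val[s_k]$; if even that fails, \emph{capturing singletons} is the clean fallback. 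For part (2), one simply invokes \ref{ex1.10}, \ref{ex1.12}, \ref{CarlSim}: each of $(K_1,\Sigma_1^*)$, $(K_3,\Sigma_3^*)$ has bigness and $\ttt$--multiadditivity (stated there), and $(K_N,\Sigma_N)$ has $\ttt$--multiadditivity by \ref{CarlSim}(ii) together with bigness—actually \ref{CarlSim}(iii) gives weak $\ttt$--bigness, from which bigness-type behaviour on candidates follows—so part (1) applies verbatim to each.
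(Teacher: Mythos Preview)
Your argument has a genuine error at the very first step: the claim that ``by clause \ref{g1}(4)(f$^{\rm tight}$) every $f\in\val[t_k]$ lies in $\post(\bar{t}\upl n)$ once $k\geq n$'' is false for $k>n$. Elements of $\val[t_k]$ have domain contained in $[m^{t_k}_\dn,m^{t_k}_\up)$, while every element of $\post(\bar{t}\upl n)$ has domain starting at $m^{t_n}_\dn$. So the partition $\cF_0\cup\ldots\cup\cF_L$ does not colour $\val[t_k]$ at all, and you cannot apply bigness to $t_k$ with this colouring. The correct move in the ``captures singletons'' case is to first \emph{fix a prefix}: choose $f_i\in\val[t_i]$ for $i\in[n,k)$, use capturing singletons to get $s'_i\in\Sigma(t_i)$ with $\val[s'_i]=\{f_i\}$, and then colour $\val[t_k]$ by $g\mapsto(\mbox{colour of }f_n\cup\ldots\cup f_{k-1}\cup g)$. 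Bigness applied to this induced colouring yields $s_k$, and $\bar{s}=\langle s'_n,\ldots,s'_{k-1},s_k\rangle$ has $\pos(\bar{s})=\{f_n\cup\ldots\cup f_{k-1}\cup g:g\in\val[s_k]\}$ monochromatic with $m^{t_k}_\dn\in\base_M(\bar{s})$.

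Your handling of the $\ttt$--multiadditivity alternative collapses: you propose to make the prefix creatures have singleton $\val$, but that is exactly the ``captures singletons'' hypothesis, not a consequence of multiadditivity. The point of multiadditivity is different: it lets you glue $t_n,\ldots,t_k$ into a single creature $R$ with $\nor[R]\geq\nor[t_k]-1$, to which bigness can then be applied; but one must still arrange that the resulting finite candidate has a break at $m^{t_k}_\dn$ (so that $m^{t_k}_\dn$, not merely $m^{t_n}_\dn$, lies in $\base_M(\bar{s})$), and your sketch does not do this. Finally, in part (2) your appeal to part (1) for $(K_N,\Sigma_N)$ fails outright: that pair does \emph{not} have bigness in the sense of \ref{defbig}(3), since $\Sigma_N(t)=\{t\}$ for any single creature $t\in K_N$ (the condition $X_s\subsetneq[m^s_\dn,m^s_\up)$ forbids collapsing to a singleton $\val$). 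The $J$--bigness of $(K_N,\Sigma_N)$ must be checked directly via the Hales--Jewett argument used in the proof of \ref{CarlSim}(iii).
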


Every tight FFCC pair can be extended to a pair capturing singletons
while preserving $\post(\bar{t})$.

\begin{definition}
Let $(K,\Sigma)$ be a tight FFCC pair for $\bH$. Define $K^{\rm sin}$ as the
family of all FP creatures $t$ for $\bH$ such that 
\[\dis[t]=K,\quad \nor[t]=0\quad\mbox{ and }\quad |\val[t]|=1.\]
Then we let $K^s=K\cup K^{\rm sin}$ and for $t_0,\ldots,t_n\in K^s$ with
$m^{t_\ell}_\up= m^{t_{\ell+1}}_\dn$ (for $\ell<n$) we set
\begin{itemize}
\item $\Sigma^{\rm sin}(t_0,\ldots,t_n)$ consists of all creatures $t\in
  K^{\rm sin}$ such that $m^t_\dn=m^{t_0}_\dn$, $m^t_\up=m^{t_n}_\up$ and 

if $\val[t]=\{f\}$ then $f\rest [m^{t_\ell}_\dn,m^{t_\ell}_\up)\in
\val[t_\ell]$ for all $\ell\leq n$;  
\item if $t_0,\ldots,t_n\in K$, then $\Sigma^s(t_0,\ldots,t_n)=
  \Sigma(t_0,\ldots,t_n)\cup \Sigma^{\rm sin}(t_0,\ldots,t_n)$;
\item if $t_\ell\in K^{\rm sin}$ for some $\ell\leq n$, then
  $\Sigma^s(t_0,\ldots,t_n)= \Sigma^{\rm sin}(t_0,\ldots,t_n)$. 
\end{itemize}
\end{definition}

\begin{observation}
Let $(K,\Sigma)$ be a tight FFCC pair for $\bH$.
\begin{enumerate}
\item $(K^s,\Sigma^s)$ is a tight FFCC pair for $\bH$ and it captures
  singletons. 
\item If $(K,\Sigma)$ has bigness then so does $(K^s,\Sigma^s)$ and
  consequently then $(K^s,\Sigma^s)$ has the $J$--bigness on any $\bar{t}\in
  {\rm PC}^J_{{\rm w}\infty}(K^s,\Sigma^s)$. 
\item If $\bar{t}\in \PCJ$, then $\bar{t}\in {\rm PC}^J_{{\rm
      w}\infty}(K^s, \Sigma^s)$ and $\post(\bar{t})$ with respect to
  $(K,\Sigma)$ is the same as $\post(\bar{t})$ with respect to
  $(K^s,\Sigma^s)$.  
\end{enumerate}
\end{observation}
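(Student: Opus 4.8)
The plan is to take the three items in order; each reduces to unwinding the definitions of $K^{\rm sin}$, $\Sigma^{\rm sin}$ and $\Sigma^s$ against the tight FFCC axioms \ref{g1}(2),(4), with only one step requiring genuine thought.

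\emph{Item (1).} First I would verify the clauses of \ref{g1}(2) together with (c$^{\rm tight}$), (f$^{\rm tight}$), (h$^{\rm tight}$) of \ref{g1}(4) for $(K^s,\Sigma^s)$. Countability of $K^s$ and finiteness of $K^s_{\le m}$ hold because a creature of $K^{\rm sin}$ is determined by an interval $[m_\dn,m_\up)$ and a single value in $\cF_\bH$, and only finitely many such have $m_\up\le m$ (each $\bH(i)$ is finite); $K^s_{\ge m}\supseteq K_{\ge m}$ is infinite since $K^{\rm sin}$-creatures have norm $0$. Clauses (d) and (e) are read off directly from the requirements $m^t_\dn=m^{t_0}_\dn$, $m^t_\up=m^{t_n}_\up$ and from $t\in\Sigma^{\rm sin}(t)$, resp.\ $t\in\Sigma(t)$. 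Clause (f$^{\rm tight}$) holds for $t\in K^{\rm sin}$ by the definition of $\Sigma^{\rm sin}$ and for $t\in K$ by the hypothesis on $(K,\Sigma)$ (a creature in $\Sigma^s(t_0,\dots,t_n)\cap K$ can only come from $\Sigma(t_0,\dots,t_n)$, which forces all $t_\ell\in K$); nonemptiness of $\Sigma^{\rm sin}(t_0,\dots,t_n)$ comes from gluing chosen $g_\ell\in\val[t_\ell]$ into $f=\bigcup_{\ell\le n}g_\ell\in\cF_\bH$. For (h$^{\rm tight}$) one may simply take the $K^{\rm sin}$-creature with value $\{f_0\cup f_1\}$ on $[m^{s_0}_\dn,m^{s_1}_\up)$; it lies in $\Sigma^{\rm sin}(s_0,s_1)\subseteq\Sigma^s(s_0,s_1)$. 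Capturing singletons is then immediate: for $t\in K^s$ and $f\in\val[t]$, the $K^{\rm sin}$-creature with value $\{f\}$ on $[m^t_\dn,m^t_\up)$ belongs to $\Sigma^{\rm sin}(t)\subseteq\Sigma^s(t)$.

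The one step that needs actual thought is the composition clause \ref{g1}(2)(g) for $\Sigma^s$, and I would split it according to whether the produced creature $s$ lies in $K$ or in $K^{\rm sin}$. If $s\in K$, then by the way $\Sigma^s$ is defined every creature occurring must already lie in $K$, so the whole configuration lives inside $(K,\Sigma)$ and clause (g) for $(K,\Sigma)$ applies. If $s\in K^{\rm sin}$ with $\val[s]=\{f\}$, I would use (f$^{\rm tight}$) twice: once to see $f\rest[m^{s_\ell}_\dn,m^{s_\ell}_\up)\in\val[s_\ell]$, and then, applying it to $s_\ell\in\Sigma^s(\bar t_\ell)$, to see that this restriction further restricts correctly onto every creature of $\bar t_\ell$; since $\bar t$ is the concatenation, $f$ restricts correctly onto every creature of $\bar t$, so $s\in\Sigma^{\rm sin}(\bar t)\subseteq\Sigma^s(\bar t)$. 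This ``(f$^{\rm tight}$) composes'' observation is really the heart of the argument.

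\emph{Items (2) and (3).} For (2), note that bigness is only ever tested at creatures of norm $>1$, and all $K^{\rm sin}$-creatures have norm $0$; hence a partition of $\val[t]$ for $t\in K$ is split by a witness $s\in\Sigma(t)\subseteq\Sigma^s(t)$ coming from bigness of $(K,\Sigma)$. Combining this with the ``captures singletons'' conclusion of (1) and feeding both into Observation \ref{getJbig}(1) yields the $J$--bigness of $(K^s,\Sigma^s)$ on any $\bar t\in{\rm PC}^J_{{\rm w}\infty}(K^s,\Sigma^s)$. For (3), membership of $\bar t$ in ${\rm PC}^J_{{\rm w}\infty}$ refers only to the creatures $t_n$ (which lie in $K\subseteq K^s$), the interval condition $m^{t_n}_\up=m^{t_{n+1}}_\dn$ and the norm/ideal condition, none of which involves $\Sigma$; so $\bar t\in\PCJ$ gives $\bar t\in{\rm PC}^J_{{\rm w}\infty}(K^s,\Sigma^s)$. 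For the equality of $\post(\bar t)$: the inclusion of the $(K,\Sigma)$-version into the $(K^s,\Sigma^s)$-version is clear from $\Sigma(t_0,\dots,t_n)\subseteq\Sigma^s(t_0,\dots,t_n)$, while the only extra values on the $(K^s,\Sigma^s)$-side come from $\Sigma^{\rm sin}(t_0,\dots,t_n)$ and are singletons $\{f_0\cup\dots\cup f_n\}$ with $f_\ell\in\val[t_\ell]$; by the description of $\pos(\bar t\rest[0,n{+}1))$ recalled in \ref{weaakbig}(1) — which rests on (h$^{\rm tight}$), (f$^{\rm tight}$) and \ref{obsforassoc}(1) of $(K,\Sigma)$ — each such $f$ already appears in $\val[s]$ for some $s\in\Sigma(t_0,\dots,t_n)$, hence in $\post(\bar t)$ computed in $(K,\Sigma)$. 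The main obstacle overall is thus the clause \ref{g1}(2)(g) check in item (1); everything else is a direct, if somewhat lengthy, unwinding of definitions.
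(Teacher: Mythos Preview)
Your proof is correct. The paper states this result as an Observation without proof, so there is nothing to compare against; your verification of the tight FFCC axioms for $(K^s,\Sigma^s)$, the case split on $s\in K$ versus $s\in K^{\rm sin}$ for clause \ref{g1}(2)(g), and the appeals to \ref{getJbig}(1) and the bracketed remark in \ref{weaakbig}(1) all go through as you describe.
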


\begin{observation}
  Let $G=(\{-1,1\},\cdot)$ and $(K_G,\Sigma_G)$ be the tight FFCC pair
  defined in \ref{re388}. Suppose that $\bar{t}\in   {\rm PC}^J_{{\rm
      w}\infty} (K_G,\Sigma_G)$. Then $(K^s_G,\Sigma^s_G)$ (sic!) has the
  $J$--bigness for $\bar{t}$.
\end{observation}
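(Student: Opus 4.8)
The plan is to verify the defining property $(\otimes)^J_{\bar t}$ of $J$--bigness (Definition~\ref{Jbig}(1)) for the pair $(K^s_G,\Sigma^s_G)$ directly, feeding the ``slightly more than needed'' claim $(\odot)$ from the proof of Example~\ref{re388}(2) into a pigeonhole argument and using the singleton creatures of $K^{\rm sin}_G$ to repackage its output into finite candidates. I would begin with two reductions. Since each $t_k\in K_G$ has $\nor[t_k]=m^{t_k}_\up$, the norms along $\bar t$ are strictly increasing; in particular $\bar t\in\PCt(K_G,\Sigma_G)$ (so the proof of \ref{re388}(2) applies to it), $\bar t\in{\rm PC}^J_{{\rm w}\infty}(K^s_G,\Sigma^s_G)$, and for each $M$ the set $E:=\{m^{t_k}_\dn:k<\omega\}$ differs from $\{m^{t_k}_\dn:\nor[t_k]>M\}\in J^+$ by finitely many points, whence $E\in J^+$. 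Also, as noted earlier, $\post(\bar t\upl n)$ is the same set whether formed inside $(K_G,\Sigma_G)$ or inside $(K^s_G,\Sigma^s_G)$. Hence, fixing $n,L,M<\omega$ and a partition $\post(\bar t\upl n)=\cF_0\cup\ldots\cup\cF_L$, the claim $(\odot)$ from the proof of \ref{re388}(2) applies and yields: all but finitely many $N\ge n+M$ are $\ell$--good for some colour $\ell=\ell(N)\le L$.

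For each such $N$ I would produce a witnessing finite candidate. Fix the witnesses from $(\boxdot)_{\ell(N)}$: numbers $j_2\ge j_1>N$, functions $g_0^N\in\pos(\bar t\rest[n,N))$ and $g_2^N\in\pos(\bar t\rest[j_1,j_2))$, and a creature $s^N\in\Sigma_G(t_N,\ldots,t_{j_1-1})$ with $\{g_0^N\cup g_1\cup g_2^N:g_1\in\val[s^N]\}\subseteq\cF_{\ell(N)}$; recall that then $\nor[s^N]=m^{s^N}_\up=m^{t_{j_1}}_\dn\ge j_1>N\ge M$. Write $g_0^N=\bigcup_{i\in[n,N)}f_i$ and $g_2^N=\bigcup_{i\in[j_1,j_2)}f_i$ with $f_i\in\val[t_i]$, and use that $(K^s_G,\Sigma^s_G)$ captures singletons to pick $u_i\in\Sigma^s_G(t_i)$ with $\val[u_i]=\{f_i\}$. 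Set
\[\bar s^N=\langle u_n,\ldots,u_{N-1},\,s^N,\,u_{j_1},\ldots,u_{j_2-1}\rangle,\]
omitting the initial or final block when $g_0^N=\emptyset$ or $g_2^N=\emptyset$. One checks that the down/up endpoints of consecutive entries agree, that the first entry has lower endpoint $m^{t_n}_\dn$, and that each entry lies in $\Sigma^s_G$ of a block of $\bar t\upl n$, so $\bar s^N\in\Sigseq(\bar t\upl n)$. A short case analysis on whether $\bar s^N$ contains a $K^{\rm sin}_G$--entry, unwinding the definition of $\Sigma^{\rm sin}_G$, gives $\pos(\bar s^N)=\{g_0^N\cup g_1\cup g_2^N:g_1\in\val[s^N]\}\subseteq\cF_{\ell(N)}$ (the singleton creatures pin $g_0^N$ and $g_2^N$ down). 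Since $m^{s^N}_\dn=m^{t_N}_\dn$ and $\nor[s^N]\ge M$, also $m^{t_N}_\dn\in\base_M(\bar s^N)$.

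Finally I would run the pigeonhole step. For $\ell\le L$ let $Z_\ell$ be the set of all $m^{t_N}_\dn$ with $N\ge n+M$ and $\ell(N)=\ell$. By $(\odot)$, $\bigcup_{\ell\le L}Z_\ell$ is $E$ minus finitely many points, hence lies in $J^+$; since $J$ is an ideal (closed under finite unions), some $Z_{\ell^*}\in J^+$. Then $\ell^*$ and $Z:=Z_{\ell^*}$ witness $(\otimes)^J_{\bar t}$: for every $z=m^{t_N}_\dn\in Z$ the finite candidate $\bar s^N\in\Sigseq(\bar t\upl n)$ satisfies $z\in\base_M(\bar s^N)$ and $\pos(\bar s^N)\subseteq\cF_{\ell^*}$.

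The main obstacle is not any individual computation but the right packaging: recognising that $(\odot)$ --- explicitly flagged in \ref{re388} as proving more than very weak bigness --- provides, for a family of base indices $N$ that is $J$--positive when indexed by $m^{t_N}_\dn$, a very-weak-bigness-type configuration, and that prepending and appending singleton creatures turns such a configuration into a bona fide $\Sigseq$--candidate whose set of possibilities sits inside a single piece $\cF_\ell$. This is precisely why the statement insists on $(K^s_G,\Sigma^s_G)$ rather than $(K_G,\Sigma_G)$: the latter does not have bigness, since its value sets are unions of two-element $G$--orbits and a two-piece partition can split every orbit, so Observation~\ref{getJbig}(1) is unavailable for it.
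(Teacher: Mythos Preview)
Your proof is correct and is precisely the intended unfolding of the paper's two-line argument, which merely notes that ${\rm PC}^J_{{\rm w}\infty}(K_G,\Sigma_G)\subseteq\PCt(K_G,\Sigma_G)$ and points to $(\odot)$ from the proof of \ref{re388}(2). You have filled in exactly the steps the paper leaves to the reader: translating $(\odot)$ into a $J^+$ set of good base points via pigeonhole on colours, and packaging the $(\boxdot)_\ell$ witnesses into elements of $\Sigseq(\bar t\upl n)$ by prepending and appending singleton creatures---which is indeed the reason the statement is formulated for $(K^s_G,\Sigma^s_G)$.
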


\begin{proof}
  Note that ${\rm PC}^J_{{\rm w}\infty}(K_G,\Sigma_G)\subseteq
  \PCt(K_G,\Sigma_G)$ and remember $(\odot)$ from the proof of
  \ref{re388}(2).  
\end{proof}

\begin{proposition}
Assume that $(K,\Sigma)$ is a tight FFCC pair for $\bH$, $J$ is an ideal on
$\omega$ and $\bar{t}\in\PCJ$.  
\begin{enumerate}
\item $\sufJ$ is a closed subset of the compact Hausdorff topological space
  $\suft$. 
\item If $(K,\Sigma)$ has the $J$--bigness for $\bar{t}$, then $\sufJ\neq
  \emptyset$. 
\item If $\bar{D}\in\sufJ$, $n<\omega$ and $B\in\cD^{n,J}_{\bar{t}}$, then
  $\set^n_{\bar{t}}(B,\bar{D})\in\cD^{n,J}_{\bar{t}}$. 
\item $\sufJ$ is closed under the operation $\circledast$. 
\end{enumerate}
\end{proposition}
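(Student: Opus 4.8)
The four parts mirror exactly the pattern of Propositions \ref{n19} and \ref{newn19}, and the plan is to reuse those arguments almost verbatim, substituting the family $\cD^{n,J}_{\bar t}$ for $\cB^n_{\bar t}$ or $\cC^n_{\bar t}$. First, for part (1): closedness is purely topological and identical to \ref{n19}(1). If $\bar D\in\suft\setminus\sufJ$, pick $n$ and $B\in\cD^{n,J}_{\bar t}$ with $B\notin D_n$; then $A_n:=\post(\bar t\upl n)\setminus B$ together with $A_\ell:=\post(\bar t\upl\ell)$ for $\ell<n$ yields a basic open neighbourhood ${\rm Nb}_{\bar A}$ of $\bar D$ disjoint from $\sufJ$.

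For part (2), the plan is to show that $J$--bigness implies each $\cD^{n,J}_{\bar t}$ has the finite intersection property, whence (each being a collection of infinite subsets closed upward) it extends to a non-principal ultrafilter $D_n$, and the sequence $\langle D_n\rangle$ witnesses $\sufJ\neq\emptyset$. Given $B_0,\dots,B_{m-1}\in\cD^{n,J}_{\bar t}$ with witnesses $(M_\ell,Y_\ell)$, set $M^*=\max_\ell M_\ell$ and $Y^*=\bigcap_\ell Y_\ell\in J^c$. Partition $\post(\bar t\upl n)$ into the $2^m$ pieces $\cF_\eta=\{f:\ (\forall\ell<m)(f\in B_\ell\Leftrightarrow\eta(\ell)=1)\}$, $\eta\in{}^m2$. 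Apply $(\otimes)^J_{\bar t}$ with $L+1=2^m$ and parameter $M^*$: there is $\eta$ and $Z\in J^+$ so that for each $z\in Z$ some $\bar s\in\Sigseq(\bar t\upl n)$ has $z\in\base_{M^*}(\bar s)$ and $\pos(\bar s)\subseteq\cF_\eta$. Since $Z\cap Y^*\neq\emptyset$ (as $Z\in J^+$, $Y^*\in J^c$), pick such a $z$ in the intersection; the corresponding $\bar s$ has $\base_{M^*}(\bar s)\cap Y^*\neq\emptyset$, hence $\base_{M_\ell}(\bar s)\cap Y_\ell\neq\emptyset$ for every $\ell$, forcing $B_\ell\cap\pos(\bar s)\neq\emptyset$; but $\pos(\bar s)\subseteq\cF_\eta$ is disjoint from $B_\ell$ whenever $\eta(\ell)=0$, so $\eta\equiv 1$ and $\pos(\bar s)\subseteq\bigcap_{\ell<m}B_\ell$, which is therefore nonempty.

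Part (3) is the crux and follows \ref{newn19}(3): given $B\in\cD^{n,J}_{\bar t}$ with witness $(M,Y)$, I claim the same $(M,Y)$ witnesses $f\circledast B\in\cD^{x_f,J}_{\bar t}$ for every $f\in\post(\bar t\upl n)$, which immediately gives $\set^n_{\bar t}(B,\bar D)=\post(\bar t\upl n)\in\cD^{n,J}_{\bar t}$ (every $f$ lands in the set). Indeed, if $\bar s\in\Sigseq(\bar t\upl x_f)$ has $\base_M(\bar s)\cap Y\neq\emptyset$, then prepending to $\bar s$ an initial segment of $\bar t$ from level $n$ to level $x_f$ produces $\bar s'\in\Sigseq(\bar t\upl n)$ with the same base (the prepended creatures sitting below norm $M$ need not be counted, but even if some are, the base only grows); so $\base_M(\bar s')\cap Y\neq\emptyset$ and hence $B\cap\pos(\bar s')\neq\emptyset$. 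Writing an element of $B\cap\pos(\bar s')$ as $f\cup h$ with $h\in\pos(\bar s)$ (using \ref{obsforassoc}(1) and that $\post(\bar t\upl n)$--members restrict correctly) gives $h\in(f\circledast B)\cap\pos(\bar s)$. The one point needing care: matching $\Sigseq(\bar t\upl x_f)$ to $\Sigseq(\bar t\upl n)$ so that $f$ really does concatenate with $\pos(\bar s)$ inside $\pos(\bar s')$ — this is the analogue of the bookkeeping in \ref{newn19}(3) and I expect it to be the main obstacle, though a routine one given \ref{obsforassoc}. Finally part (4): if $\bar D^1,\bar D^2\in\sufJ$ and $\bar D=\bar D^1\circledast\bar D^2$, then for $B\in\cD^{n,J}_{\bar t}$ part (3) applied to $\bar D^1$ gives $\set^n_{\bar t}(B,\bar D^1)\in\cD^{n,J}_{\bar t}\subseteq D^2_n$, so by definition $B\in D_n$; hence $\cD^{n,J}_{\bar t}\subseteq D_n$ for all $n$ and $\bar D\in\sufJ$.
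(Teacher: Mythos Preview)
Your treatments of (1), (2) and the deduction (3)$\Rightarrow$(4) are fine and match the paper. The gap is in (3).

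You try to imitate Proposition~\ref{newn19}(3) and claim that in fact $\set^n_{\bar t}(B,\bar D)=\post(\bar t\upl n)$, arguing that the same $(M,Y)$ witnesses $f\circledast B\in\cD^{x_f,J}_{\bar t}$ for every $f$. But the step ``writing an element of $B\cap\pos(\bar s')$ as $f\cup h$'' does not work: if $\bar s'$ is obtained by prepending $t_n,\dots,t_{x_f-1}$ to $\bar s$, then an element of $\pos(\bar s')$ has the form $f'\cup h$ with $f'\in\pos(\bar t\rest[n,x_f))$ and $h\in\pos(\bar s)$, and there is no reason why $f'$ should equal your fixed $f$. (In \ref{newn19}(3) this obstruction disappears because the definition of $\cC^n_{\bar t}$ has a free prefix $g_0$ which can absorb $f$; the definition of $\cD^{n,J}_{\bar t}$ has no such free prefix.) So you have not shown $f\circledast B\in\cD^{x_f,J}_{\bar t}$, and the stronger conclusion $\set^n_{\bar t}(B,\bar D)=\post(\bar t\upl n)$ is not available here.

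The paper's argument for (3) follows instead the pattern of Claim~\ref{cl5}. One does \emph{not} fix $f$; one fixes $\bar s\in\Sigseq(\bar t\upl n)$ with $\base_M(\bar s)\cap Y\neq\emptyset$, lets $x$ be the level where $\bar s$ ends, and shows that the \emph{union} $A=\bigcup\{f\circledast B:f\in\pos(\bar s)\}$ lies in $\cD^{x,J}_{\bar t}$ (by concatenating $\bar s$ with any $\bar r\in\Sigseq(\bar t\upl x)$ and using the witness for $B$ on $\bar s\conc\bar r$). Then $A\in D_x$, and since $\pos(\bar s)$ is finite, some $f\in\pos(\bar s)$ has $f\circledast B\in D_{x_f}$, i.e.\ $f\in\set^n_{\bar t}(B,\bar D)$. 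Thus $\pos(\bar s)\cap\set^n_{\bar t}(B,\bar D)\neq\emptyset$ for every qualifying $\bar s$, which is exactly the statement $\set^n_{\bar t}(B,\bar D)\in\cD^{n,J}_{\bar t}$.
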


\begin{proof}
(1)\quad Same as \ref{n19}(1).

\noindent (2)\quad  Similar to \ref{n19}(2).

\noindent (3)\quad Let $B\in\cD^{n,J}_{\bar{t}}$ be witnessed by $M<\omega$
and $Z\in J^c$. We are going to show that then for each
$\bar{s}\in\Sigseq(\bar{t}\upl n)$ with $\base_M(\bar{s})\cap
Z\neq\emptyset$ we have $\pos(\bar{s})\cap \set^n_{\bar{t}}(B,\bar{D}) \neq
\emptyset$. So let $\bar{s}=\langle s_0,\ldots,s_k\rangle\in
\Sigseq(\bar{t}\upl n)$, $\base_M(\bar{s})\cap Z\neq\emptyset$ and let $x$
be such that $m^{s_k}_\up=m^{t_x}_\dn$. Set $A=\bigcup\{f\circledast B:f\in
\pos(\bar{s})\}$. Suppose that $\bar{r}\in\Sigseq(\bar{t}\upl x)$. Then
$\bar{s}\conc\bar{r}\in\Sigseq(\bar{t}\upl n)$ and $\base_M(\bar{s}\conc
\bar{r})\supseteq \base_M(\bar{s})$, so $\pos(\bar{s}\conc\bar{r})\cap B\neq
\emptyset$. Let $g\in\pos(\bar{s}\conc\bar{r})\cap B$ and $f_0=g\rest
m^{s_k}_\up$, $f_1=g\rest [m^{s_k}_\up,\omega)$. Necessarily
$f_0\in\pos(\bar{s})$, $f_1\in\pos(\bar{r})$ and (as $g=f_0\cup f_1\in B$)
$f_1\in f_0\circledast B$. Consequently $A\cap \pos(\bar{r})\neq
\emptyset$. Now we easily conclude that $A\in \cD^{x,J}_{\bar{t}}\subseteq
D_x$. Hence for some $f\in \pos(\bar{s})$ we have $f\circledast B\in D_x$,
so $f\in\set^n_{\bar{t}}(B,\bar{D})$.

\noindent (4)\quad Follows from (3). 
\end{proof}

\begin{corollary}
  Assume that $(K,\Sigma)$ is a tight FFCC pair, $J$ is an ideal on $\omega$
  and $\bar{t}\in\PCJ$. If $(K,\Sigma)$ has the $J$--bigness for $\bar{t}$,
  then there is $\bar{D}\in\sufJ$ such that $\bar{D}\circledast
  \bar{D}=\bar{D}$. 
\end{corollary}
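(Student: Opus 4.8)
The plan is to derive this corollary by the same route used for \ref{new2.8} and \ref{fancy2.8}: apply the abstract idempotent-point lemma \ref{z4} with $X=\sufJ$ and $\odot=\circledast$. Concretely, I must check three things: that $\sufJ$ is a non-empty compact Hausdorff space, that $\circledast$ restricts to a binary operation on $\sufJ$, and that this restricted operation is associative and right continuous. Each of these is supplied by a statement proved earlier (for the $\PCJ$ setting, in part via the Remark following \ref{Jbig}), so the argument is essentially bookkeeping.

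For the topological hypotheses: by \ref{obsforassoc}(3) the space $\suft=\prod_{n<\omega}\beta_*(\post(\bar{t}\upl n))$, with the Tichonov product topology, is compact Hausdorff; and the Remark after \ref{Jbig} notes that \ref{obsforassoc} (whose proof never uses norms) remains valid for $\bar{t}\in\PCJ$. By part (1) of the preceding Proposition, $\sufJ$ is a closed subset of $\suft$, hence itself compact; the Hausdorff property is inherited by subspaces. By part (2) of that Proposition, the assumed $J$--bigness for $\bar{t}$ gives $\sufJ\neq\emptyset$. So $\sufJ$ is a non-empty compact Hausdorff space.

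For the algebraic and continuity hypotheses: that $\circledast$ maps $\sufJ\times\sufJ$ into $\sufJ$ is exactly part (4) of the preceding Proposition, so (together with \ref{correctop}(1), again valid here by the Remark) $\circledast$ is a well-defined operation on the set $\sufJ$. Associativity on all of $\suft$ is \ref{correctop}(3), hence it holds a fortiori on $\sufJ$. Right continuity is \ref{correctop}(2): for each fixed $\bar{D}^1\in\sufJ$ the map $\bar{D}^2\mapsto\bar{D}^1\circledast\bar{D}^2$ is continuous on $\suft$; restricting its domain to the subspace $\sufJ$ keeps it continuous, and by part (4) its range is contained in $\sufJ$, so as a self-map of $\sufJ$ it is still continuous. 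Thus all hypotheses of \ref{z4} are satisfied, and the lemma produces $\bar{D}\in\sufJ$ with $\bar{D}\circledast\bar{D}=\bar{D}$, as required.

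The only point that needs a moment's attention — and it is genuinely minor — is verifying that \ref{obsforassoc} and \ref{correctop} may be invoked for $\limsup_J$--candidates rather than tight pure candidates; this is precisely what the Remark after \ref{Jbig} licenses, since neither proof uses the norm-divergence condition distinguishing $\PCt$ from $\PCJ$. Beyond that, the proof is a straightforward chain of citations (\ref{z4}, \ref{obsforassoc}(3), \ref{correctop}, and parts (1), (2), (4) of the preceding Proposition), and I expect no real obstacle.
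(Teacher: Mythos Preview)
Your proposal is correct and follows exactly the route the paper intends: the corollary is stated without proof precisely because it is the $\sufJ$ analogue of \ref{new2.8} and \ref{fancy2.8}, obtained by applying \ref{z4} together with \ref{obsforassoc}(3), \ref{correctop}, and parts (1), (2), (4) of the preceding Proposition (with the Remark after \ref{Jbig} justifying the use of \ref{obsforassoc} and \ref{correctop} for $\bar{t}\in\PCJ$). There is nothing to add.
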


\begin{definition}
\label{game}
Let $J$ be an ideal on $\omega$.
\begin{enumerate}
\item A game $\Game_J$ between two players, One and Two, is defined as
  follows. A play of $\Game_J$ lasts $\omega$ steps in which the players
  construct a sequence $\langle Z_i,k_i:i<\omega\rangle$. At a stage $i$ of
  the play, first One chooses a set $Z_i\in J^+$ and then Two answers with
  $k_i\in Z_i$. At the end, Two wins the play $\langle Z_i,k_i:i<\omega
  \rangle$ if and only if $\{k_i:i<\omega\}\in J^+$. 
\item We say that $J$ is {\em an R--ideal\/} if player One has no winning
  strategy in $\Game_J$.
\end{enumerate}
\end{definition}

\begin{remark}
  If $J$ is a maximal ideal on $\omega$, then it is an R--ideal if and only
  if the dual filter $J^c$ is a Ramsey ultrafilter. Also, the ideal
  $[\omega]^{<\omega}$ of all finite subsets of $\omega$ is an R--ideal.
\end{remark}

\begin{theorem}
Assume that $(K,\Sigma)$ is a tight FFCC pair, $J$ is an R--ideal on
$\omega$ and $\bar{t}\in\PCJ$. Suppose that $\bar{D}\in\sufJ$ satisfies
$\bar{D}\circledast\bar{D}=\bar{D}$ and let $A_n\in D_n$ for
$n<\omega$. Then there are $\bar{s}\in\PCJ$ and $0=k(0)<k(1)< k(2)<k(3)
<\ldots <\omega$ such that $\bar{t}\leq \bar{s}$, $m^{s_0}_\dn=m^{t_0}_\dn$
and

if $i<j$, $\ell<\omega$, $s_{k(i)}\in\Sigt(\bar{t}\upl \ell)$, then
$\pos(s_{k(i)},s_{k(i)+1},\ldots,s_{k(j)-1})\subseteq A_\ell$. 
\end{theorem}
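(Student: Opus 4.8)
The plan is to run the block construction from the proof of Theorem~\ref{new2.9}, but to interleave it with a play of the game $\Game_J$ of \ref{game}, using the hypothesis that $J$ is an R--ideal to keep the candidate we produce inside $\PCJ$. As in \ref{new2.9} we build $\bar s$ as a concatenation $\bar s=\bar r^{(0)}\conc\bar r^{(1)}\conc\cdots$ of finite candidates together with a sequence of ``target'' sets $B_i\in D_{\ell_i}$ with $B_i\subseteq A_{\ell_i}$, and the indices $k(i)$ of the conclusion will simply mark the positions in $\bar s$ where the blocks $\bar r^{(i)}$ begin, so that $s_{k(i)}\in\Sigt(\bar t\upl\ell_i)$.

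The first step is the replacement for Claim~\ref{cl6}: if $n,M<\omega$, $B\in D_n$ and $B'=B\cap\set^n_{\bar t}(B,\bar D)$, then the set
\[Z^{n,M}_B:=\{z<\omega:(\exists\bar r\in\Sigseq(\bar t\upl n))(z\in\base_M(\bar r)\ \&\ \pos(\bar r)\subseteq B')\}\]
belongs to $J^+$. Indeed, $\bar D\circledast\bar D=\bar D$ gives $\set^n_{\bar t}(B,\bar D)\in D_n$, so $B'\in D_n$ and $\post(\bar t\upl n)\setminus B'\notin D_n\supseteq\cD^{n,J}_{\bar t}$; unwinding \ref{Jbig}(4), this means that for every $Y\in J^c$ there is $\bar r\in\Sigseq(\bar t\upl n)$ with $\base_M(\bar r)\cap Y\neq\emptyset$ and $\pos(\bar r)\subseteq B'$. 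If $Z^{n,M}_B$ were in $J$, apply this with $Y=\omega\setminus Z^{n,M}_B$; but $\pos(\bar r)\subseteq B'$ forces $\base_M(\bar r)\subseteq Z^{n,M}_B$, contradicting $\base_M(\bar r)\cap Y\neq\emptyset$. I also record the bookkeeping facts that $f\in B'$ implies $f\circledast B\in D_{x_f}$, and that if $\bar r$ spans the interval $[n,n')$ over $\bar t$ then $x_f=n'$ for every $f\in\pos(\bar r)$, so $A_{n'}\cap\bigcap\{f\circledast B:f\in\pos(\bar r)\}\in D_{n'}$ (a \emph{finite} intersection of sets in $D_{n'}$).

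Next I define a strategy for player One in $\Game_J$. Fix once and for all a choice function assigning to each $(n,M,B,z)$ with $z\in Z^{n,M}_B$ a witness $\bar r(n,M,B,z)\in\Sigseq(\bar t\upl n)$ as above. Given a partial play in which Two has answered $k_0,\dots,k_{i-1}$, these answers determine recursively: levels $0=\ell_0<\ell_1<\cdots<\ell_i$, blocks $\bar r^{(j)}=\bar r(\ell_j,j,B_j,k_j)$ spanning the consecutive nonempty intervals $[\ell_j,\ell_{j+1})$ over $\bar t$, and sets $B_0=A_0$, $B_{j+1}=A_{\ell_{j+1}}\cap\bigcap\{f\circledast B_j:f\in\pos(\bar r^{(j)})\}\in D_{\ell_{j+1}}$. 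One then plays $Z_i:=Z^{\ell_i,i}_{B_i}\in J^+$. Since $J$ is an R--ideal this strategy is not winning, so there is a play $\langle Z_i,k_i:i<\omega\rangle$ consistent with it in which Two wins, i.e.\ $\{k_i:i<\omega\}\in J^+$. Along this play set $\bar s:=\bar r^{(0)}\conc\bar r^{(1)}\conc\cdots$ and let $k(i)$ be the index at which $\bar r^{(i)}$ starts inside $\bar s$; then $0=k(0)<k(1)<\cdots$ and $s_{k(i)}\in\Sigt(\bar t\upl\ell_i)$.

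Finally I verify the conclusion. First, $\bar t\le\bar s$ and $m^{s_0}_\dn=m^{t_0}_\dn$, because the blocks $\bar r^{(i)}$ cut $\bar t$ into the consecutive nonempty intervals $[\ell_i,\ell_{i+1})$ starting at $0$. Second, $\bar s\in\PCJ$: since $k_i\in\base_i(\bar r^{(i)})$, block $i$ contains a creature of norm $\ge i$ whose $\dn$--value is $k_i$, so for each $M$ the set $\{m^{s_p}_\dn:\nor[s_p]>M\}$ contains $\{k_i:i>M\}$, which differs from $\{k_i:i<\omega\}\in J^+$ by a finite set and is therefore in $J^+$. Third, the partition statement: exactly as in Claim~\ref{cl7} one proves by induction on $j-i$ that $\pos(s_{k(i)},\dots,s_{k(j)-1})\subseteq B_i$; in the inductive step one splits $f\in\pos(s_{k(i)},\dots,s_{k(j)-1})$ as $f=g\cup f'$ with $g\in\pos(\bar r^{(i)})$ and $f'\in\pos(s_{k(i+1)},\dots,s_{k(j)-1})$, uses the inductive hypothesis and the construction of $B_{i+1}$ to get $f'\in B_{i+1}\subseteq g\circledast B_i$, and concludes $f=g\cup f'\in B_i$. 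Since $B_i\subseteq A_{\ell_i}$ this is the required inclusion with $\ell=\ell_i$. I expect the main obstacle to be organizational rather than combinatorial: one must make sure that One's strategy genuinely depends only on the visible history of the play (this is why the choice function is fixed in advance, so that $\ell_i,B_i,Z_i$ are honest functions of $k_0,\dots,k_{i-1}$) and that each $Z_i\in J^+$; once this is set up, the combinatorial core is just that of \ref{new2.9}.
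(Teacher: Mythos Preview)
Your proposal is correct and follows essentially the same approach as the paper: both prove the analogue of Claim~\ref{cl6} (your set $Z^{n,M}_B\in J^+$ is exactly the paper's Claim~\ref{cl8}), define a strategy for One in $\Game_J$ that builds blocks $\bar r^{(i)}$ along the play, use the R--ideal hypothesis to obtain a play with $\{k_i:i<\omega\}\in J^+$, concatenate the blocks, and finish with the induction of Claim~\ref{cl7}. The only differences are cosmetic---an index shift ($B_i,\ell_i$ versus the paper's $B_{i-1},\ell_{i-1}$) and your explicit fixing of a choice function to make the strategy a genuine function of the history, which the paper leaves implicit.
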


\begin{proof}
The proof follows the pattern of \ref{new2.9} with the only addition that we
need to make sure that at the end $\bar{s}\in\PCJ$, so we play a round of
$\Game_J$. First,

\begin{claim}
\label{cl8}
Assume $M,\ell<\omega$ and $B\in D_\ell$. Then for some set $Z\in J^+$, for
every $x\in Z$, there is $\bar{s}\in\Sigseq(\bar{t}\upl \ell)$ such that 
\[x\in\base_M(\bar{s}),\quad \pos(\bar{s})\subseteq B\quad\mbox{ and }\quad
(\forall f\in \pos(\bar{s}))(f\circledast B\in D_{x_f}).\]
\end{claim}

\begin{proof}[Proof of the Claim]
Similar to \ref{cl6}. Since $\bar{D}\circledast\bar{D}=\bar{D}$ and $B\in
D_\ell$, we know that $\post(\bar{t}\upl \ell)\setminus (B\cap
\set^\ell_{\bar{t}}(B,\bar{D}))\notin \cD^{\ell,J}_{\bar{t}}$. Therefore, for
each $Y\in J^c$ there are $x\in Y$ and $\bar{s}\in\Sigseq(\bar{t}\upl \ell)$
such that $x\in\base_M(\bar{s})$ and $\pos(\bar{s})\subseteq
B\cap\set^\ell_{\bar{t}}(B,\bar{D})$. So the set $Z$ of $x$ as above belongs
to $J^+$.    
\end{proof}

Consider the following strategy for player One in the game $\Game_J$. During
the course of a play, in addition to his innings $Z_i$, One chooses aside
$\ell_i<\omega$, $B_i\in D_{\ell_i}$ and $\bar{s}^i\in\FC$. So suppose that
the players have arrived to a stage $i$ of the play and a sequence $\langle
Z_j,k_j,\bar{s}^j,\ell_j,B_j:j<i\rangle$ has been constructed. Stipulating
$\ell_{-1}=0$ and $B_{-1}=A_0$, One uses \ref{cl8} to pick a set 
$Z_i\subseteq\omega\setminus m^{t_{\ell_{i-1}}}_\dn$ such that $Z_i\in J^+$
and for all $x\in Z_i$ there exists $\bar{s}\in\Sigseq(\bar{t}\upl
\ell_{i-1})$ with  
\[x\in\base_{i+1}(\bar{s})\ \ \&\ \ \pos(\bar{s})\subseteq B_{i-1}\ \ \&\ \
(\forall f\in \pos(\bar{s}))(f\circledast B_{i-1}\in D_{x_f}).\] 
The set $Z_i$ is One's inning in $\Game_J$ after which Two picks $k_i\in
Z_i$. Now, One chooses $\bar{s}^i\in\Sigseq(\bar{t}\upl \ell_{i-1})$ such
that 
\begin{enumerate}
\item[$(\alpha)_i$] $k_i\in\base_{i+1}(\bar{s}^i)$,
\item[$(\beta)_i$]  $\pos(\bar{s}^i)\subseteq B_{i-1}$, and
\item[$(\gamma)_i$] $(\forall f\in\pos(\bar{s}^i))(f\circledast B_{i-1} \in
  D_{x_f})$.   
\end{enumerate}
He also sets
\begin{enumerate}
\item[$(\delta)_i$] $\ell_i=x_f$ for all (equivalently: some)
  $f\in\pos(\bar{s}^i)$, and
\item[$(\varepsilon)_i$] $B_i=A_{\ell_i}\cap\bigcap\{f\circledast
  B_{i-1}:f\in \pos(\bar{s}^i)\}\in D_{\ell_i}$.
\end{enumerate}

The strategy described above cannot be winning for One, so there is a play
$\langle Z_i,k_i:i<\omega\rangle$ in which One follows the strategy, but
$\{k_i:i<\omega\}\in J^+$. In the course of this play One constructed aside
a sequence $\langle \ell_i,B_i,\bar{s}^i:i<\omega\rangle$ such that
$\bar{s}^i \in\Sigseq(\bar{t}\upl\ell_{i-1})$ and conditions
$(\alpha)_i$---$(\varepsilon)_i$ hold (where we stipulate $\ell_{-1}=0$,
$B_{-1}=A_0$). Note that $\bar{s}^i\conc\bar{s}^{i+1}\conc\ldots\conc
\bar{s}^{i+k}\in \Sigseq(\bar{t}\upl \ell_{i-1})$ for each
$i,k<\omega$. Also 
\[\bar{s}\stackrel{\rm def}{=}\bar{s}^0\conc\bar{s}^{1}\conc\bar{s}^{2}
\conc\ldots\in\PCJ\quad \mbox{ and }\quad \bar{s}\geq \bar{t}.\]

\begin{claim}
For each $i,k<\omega$, $\pos(\bar{s}^i\conc\bar{s}^{i+1}\conc\ldots\conc 
\bar{s}^{i+k})\subseteq B_{i-1}\subseteq A_{\ell_{i-1}}$.
\end{claim}

\begin{proof}[Proof of the Claim]
  Induction on $k$; fully parallel to \ref{cl7}.
\end{proof}

Now the theorem readily follows. 
\end{proof}

\begin{conclusion}
  Assume that $(K,\Sigma)$ is a tight FFCC pair, $J$ is an R--ideal on
  $\omega$ and $\bar{t}\in\PCJ$. Suppose also that $(K,\Sigma)$ has
  $J$--bigness for $\bar{t}$. For $n<\omega$ let $k_n<\omega$ and let
  $d_n:\post(\bar{t}\upl n)\longrightarrow k_n$. Then there are
  $\bar{s}\in\PCJ$ and $0=k(0)<k(1)<k(2)<\ldots <\omega$ and $\langle
  c_i:i<\omega\rangle$ such that
  \begin{itemize}
\item $\bar{t}\leq \bar{s}$, $m^{s_0}_{\dn}=m^{t_0}_\dn$, and 
\item for each $i,n<\omega$,\\
if $s_{k(i)}\in\Sigt(\bar{t}\upl n)$, $i<j<\omega$ and
  $f\in\pos(s_{k(i)},s_{k(i)+1},\ldots,s_{k(j)-1})$,\\
then $d_n(f)=c_i$. 
  \end{itemize}
\end{conclusion}

\begin{corollary}
Let $\bH^*:\omega\longrightarrow\omega\setminus\{0\}$ be increasing,
$\bbZ^*=\bigcup\limits_{n<\omega}\prod\limits_{i<n}\bH^*(i)$ and let $J$ be
an R--ideal on $\omega$. Suppose that $\bbZ^*=C_0\cup\ldots\cup C_L$,
$L<\omega$. Then there are sequences $\langle k_i,n_i:i<\omega\rangle$ and
$\langle E_i:i<\omega\rangle$ and $\ell\leq L$ such that
\begin{enumerate}
\item[(a)] $0=n_0\leq k_0<n_1\leq\ldots <n_i\leq k_i\leq n_{i+1}\leq\ldots
  <\omega$, $\{k_i:i<\omega\}\in J^+$,\\
and for each $i<\omega$: 
\item[(b)] $\emptyset\neq E_i\subseteq\bH^*(i)$, $|E_{k_i}|=i+1$, and
\[\prod\limits_{j<n_i} E_j\subseteq C_\ell.\]
\end{enumerate}
\end{corollary}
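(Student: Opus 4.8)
The plan is to present $\bbZ^*$ as the set $\post(\bar t)$ for a suitable tight FFCC pair and candidate, invoke the $J$--bigness conclusion just established, and read the resulting tight pure candidate back off as the required data. For $\bH^*$ I would use the pair $(K^*,\Sigma^*)$ defined as follows: a creature $t\in K^*$ is an interval $[m^t_\dn,m^t_\up)$ together with a set $A^t\subseteq\bH^*(m^t_\dn)$ and single values $v^t_j\in\bH^*(j)$ for $m^t_\dn<j<m^t_\up$; its one ``active'' coordinate is thus its left endpoint, $\nor[t]=\log_2|A^t|$, and $\val[t]=\{f\in\prod_{j\in[m^t_\dn,m^t_\up)}\bH^*(j):f(m^t_\dn)\in A^t\text{ and }f(j)=v^t_j\text{ for }j>m^t_\dn\}$. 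For $\Sigma^*(t_0,\dots,t_n)$ (with $m^{t_\ell}_\up=m^{t_{\ell+1}}_\dn$) take all $t\in K^*$ on $[m^{t_0}_\dn,m^{t_n}_\up)$ with $A^t\subseteq A^{t_0}$ whose values $v^t_j$ refine the data of the $t_\ell$'s over each block. A routine check shows $(K^*,\Sigma^*)$ is a tight FFCC pair which captures singletons and has bigness (for a $2$--colouring of $\val[t]\cong A^t$ just apply the pigeonhole principle to $A^t$), so by Observation \ref{getJbig}(1) it has the $J$--bigness for every $\bar t\in\PCJ$.

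Next let $t_n$ be the creature on $[n,n+1)$ with $A^{t_n}=\bH^*(n)$, and put $\bar t=\langle t_n:n<\omega\rangle$; since $\bH^*$ is increasing, $\nor[t_n]\to\infty$, so $\bar t\in\PCJ$ with $m^{t_0}_\dn=0$, and one checks that $\post(\bar t\upl n)$ consists exactly of the total functions with domain an interval $[n,m)$, $n<m$, whence $\post(\bar t)=\bbZ^*\setminus\{\emptyset\}$. Set $d_0(f)=\min\{\ell\le L:f\in C_\ell\}$ and let $d_n$ be constant for $n>0$ (only $d_0$ will matter). Applying the conclusion preceding this corollary we obtain $\bar s\in\PCJ$ with $\bar t\le\bar s$, $m^{s_0}_\dn=0$, integers $0=k(0)<k(1)<\cdots$, and colours $c_r$ such that $d_n$ is constant on each $\pos(s_{k(r)},\dots,s_{k(r')-1})$; putting $\ell:=c_0\le L$ and using $s_0=s_{k(0)}\in\Sigt(\bar t\upl 0)$ gives $\pos(s_0,\dots,s_{k(r)-1})\subseteq C_\ell$ for every $r\ge1$.

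Now I read off the answer. The intervals $[m^{s_m}_\dn,m^{s_m}_\up)$ tile $\omega$, and since the active coordinate of every creature of $(K^*,\Sigma^*)$ is its left endpoint, $\pos(s_0,\dots,s_m)$ is the full box $\prod_{j<m^{s_m}_\up}E_j$ where $E_{m^{s_m}_\dn}:=A^{s_m}$ and $E_j$ is a singleton otherwise; hence $\prod_{j<a}E_j\subseteq C_\ell$ for every $a$ of the form $m^{s_{k(r)}}_\dn$, $r\ge1$, and such $a$'s are cofinal in $\omega$. Since $\bar s\in\PCJ$, $\{m^{s_m}_\dn:\nor[s_m]>M\}\in J^+$ for each $M$, so I run one more play of $\Game_J$: at stage $i$, One plays $Z_i=\{m^{s_m}_\dn:\nor[s_m]>i\}\cap(k_{i-1},\omega)$ (with the obvious convention for $i=0$), which lies in $J^+$ as it differs by a finite set from a $J^+$ set. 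As $J$ is an R--ideal this strategy is not winning, so there is a play whose moves $k_0<k_1<\cdots$ satisfy $\{k_i:i<\omega\}\in J^+$, and each $k_i=m^{s_{m_i}}_\dn$ with $\nor[s_{m_i}]>i$, hence $|A^{s_{m_i}}|\ge i+1$. Shrinking each such creature (a legal $\Sigma^*$--move) so that $|A^{s_{m_i}}|$ becomes exactly $i+1$ only makes the boxes smaller, so homogeneity and membership in $\PCJ$ are preserved; finally choose for each $i$ an admissible endpoint $n_i$ strictly between $k_{i-1}$ and $k_i$ of the form $m^{s_{k(r)}}_\dn$ (possible — this requirement can be folded into $Z_i$), take $n_0=0$, and define the $E_j$ from the shrunk creatures. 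Then $0=n_0\le k_0<n_1\le k_1<\cdots$, $\emptyset\ne E_j\subseteq\bH^*(j)$, $|E_{k_i}|=i+1$ and $\prod_{j<n_i}E_j\subseteq C_\ell$; the degenerate factor $\prod_{j<n_0}E_j=\{\emptyset\}$ is harmless since $\emptyset\in\bbZ^*$ and $d_0$ may be taken to give it the value $c_0$.

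The step I expect to be the main obstacle is precisely this extraction of the $k_i$'s: the earlier conclusion yields a candidate in $\PCJ$ but is silent about where along $\omega$ the high-norm creatures of $\bar s$ sit, whereas the corollary insists the coordinates carrying the enlarged sets be $J$--positive. Designing $(K^*,\Sigma^*)$ so that a creature's only non-trivial coordinate is its left endpoint is what turns ``$\bar s\in\PCJ$'' into the statement that the candidate large coordinates form a decreasing $\omega$--chain of $J^+$--sets, and the R--ideal property is then exactly what is needed to diagonalise through that chain. The remaining ingredients — the FFCC axioms for $(K^*,\Sigma^*)$, bigness by pigeonhole, capturing singletons, and the identification $\post(\bar t)=\bbZ^*\setminus\{\emptyset\}$ — are routine verifications.
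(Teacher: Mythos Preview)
The paper states this corollary without proof, so there is nothing to compare against directly; your task was to supply the argument the authors left implicit, and your overall plan is the right one. Building a tight FFCC pair whose creatures have their unique ``active'' coordinate at the left endpoint, checking bigness (pigeonhole) and capturing singletons, and then invoking the preceding Conclusion is exactly the intended route --- this is the $\limsup$ analogue of how Corollary~\ref{case388} is deduced from Conclusion~\ref{fancy2.10}. You are also right that the preceding Conclusion only yields $\bar s\in\PCJ$, i.e.\ a descending chain of $J^+$ sets of left endpoints, and that a further diagonalisation via $\Game_J$ is needed to extract a single $J^+$ set $\{k_i:i<\omega\}$ with $|E_{k_i}|\ge i+1$; this is the genuine extra step, and your use of the R--ideal hypothesis here is correct. (Alternatively one could revisit the proof of the preceding Theorem and note that the $k_i$'s produced inside that proof already form a $J^+$ set of high--norm coordinates, but your external replay of $\Game_J$ is cleaner.)

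Two points deserve tightening. First, your handling of the empty sequence is not quite right: you cannot ``take $d_0$ to give $\emptyset$ the value $c_0$'', since $d_0$ is forced by the given partition. The clean fix is to first determine the colour $\ell^*$ of $\emptyset$ and then run your argument on the partition $\post(\bar t)=C_{\ell^*}'\cup\bigcup_{\ell\ne\ell^*}C_\ell$ with $C_{\ell^*}'=C_{\ell^*}\setminus\{\emptyset\}$; if the homogeneous colour comes out $\ne\ell^*$ you are done, and if it equals $\ell^*$ then $\{\emptyset\}\subseteq C_{\ell^*}$ anyway. Second, the interleaving of the $n_i$'s with the $k_i$'s should be made explicit: at stage $i$ of your $\Game_J$ play, let One first fix $n_i>k_{i-1}$ of the form $m^{s_{k(r)}}_\dn$ (cofinally many exist), then put $Z_i=\{m^{s_m}_\dn:\nor[s_m]\ge\log_2(i+1)\}\setminus n_i\in J^+$, and let Two's reply be $k_i$. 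This guarantees $n_i\le k_i<n_{i+1}$ and $\prod_{j<n_i}E_j=\pos(s_0,\ldots,s_{k(r)-1})\subseteq C_\ell$ simultaneously, after which shrinking $A^{s_m}$ at the chosen $k_i$'s to size exactly $i+1$ is harmless.
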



\end{document}